\newcommand{\I}{{\mathbb I}}
\newcommand{\N}{{\mathbb N}}
\newcommand{\R}{{\mathbb R}}
\newcommand{\Z}{{\mathbb Z}}
\newcommand{\cA}{{\mathcal A}}
\newcommand{\cB}{{\mathcal B}}
\newcommand{\cU}{{\mathcal U}}
\newcommand{\eF}{{\mathscr F}}
\newcommand{\eG}{{\mathscr G}}
\newcommand{\eK}{{\mathscr K}}
\newcommand{\eL}{{\mathscr L}}
\newcommand{\eN}{{\mathscr N}}
\renewcommand{\d}{\,{\mathrm d}}
\newcommand{\eps}{\varepsilon}
\newcommand{\tm}{\times}
\newcommand{\vphi}{\varphi}
\newcommand{\ocA}{\omega_\cA^+}
\newcommand{\intoo}[1]{\bigl(#1\bigr)}							% open interval
\newcommand{\intcc}[1]{\left[#1\right]}							% closed interval
\newcommand{\set}[1]{\left\{#1\right\}}							% set
\newcommand{\abs}[1]{\left|#1\right|}							% absolute value
\newcommand{\norm}[1]{\left\|#1\right\|}						% norm
\newcommand{\dist}[1]{\operatorname{dist}\intoo{#1}}			% distance
\newcommand{\dar}[1]{\operatorname{dar}\bigl(#1\bigr)}			% Darbo constant
\newcommand{\fall}{\quad\text{for all }}
\newcommand{\eref}[1]{Exam.~\ref{#1}}
\newcommand{\lref}[1]{Lemma~\ref{#1}}
\newcommand{\pref}[1]{Prop.~\ref{#1}}
\newcommand{\tref}[1]{Thm.~\ref{#1}}
\newcommand{\cref}[1]{Cor.~\ref{#1}}
\newcommand{\fref}[1]{Fig.~\ref{#1}}
\newcommand{\rref}[1]{Rem.~\ref{#1}}
\newcommand{\sref}[1]{Sect.~\ref{#1}}
\newcommand{\Cd}{C(\Omega,\R^d)}
\begin{document}

\title{Forward and pullback dynamics of nonautonomous integrodifference equations\thanks{The work of Huy Huynh has been supported by the Austrian Science Fund (FWF) under grant number P 30874-N35.}
}
\subtitle{Basic constructions}
%
%\titlerunning{Short form of title} % if too long for running head
%
\author{Huy Huynh \and Peter E.\ Kloeden \and Christian P\"otzsche}

\authorrunning{H.\ Huynh, P.E.\ Kloeden, C.\ P\"otzsche} % if too long for running head

\institute{Huy Huynh and Christian P\"otzsche\at
			Institut f\"ur Mathematik, Universit\"at Klagenfurt, 9020 Klagenfurt, Austria\\
			\email{\{pham.huynh,christian.poetzsche\}@aau.at}
			\and
			Peter E.\ Kloeden\at
			Mathematisches Institut, Universit\"at T\"ubingen, 72076 T\"ubingen, Germany\\
			\email{kloeden@na.uni-tuebingen.de}}

\dedication{Dedicated to the memory of Russell Johnson}

\date{Received: \today / Accepted: date}
% The correct dates will be entered by the editor
%
\maketitle
\begin{abstract}
	In theoretical ecology, models describing the spatial dispersal and the temporal evolution of species having non-overlapping generations are often based on integrodifference equations. For various such applications the environment has an aperiodic influence on the models leading to nonautonomous integrodifference equations. In order to capture their long-term behaviour comprehensively, both pullback and forward attractors, as well as forward limit sets are constructed for general infinite-dimensional nonautonomous dynamical systems in discrete time. While the theory of pullback attractors, but not their application to integrodifference equations, is meanwhile well-established, the present novel approach is needed in order to understand their future behaviour. 
\keywords{Forward limit set \and Forward attractor \and Pullback attractor \and Asymptotically autonomous equation \and Integrodifference equation \and Urysohn operator}
\subclass{37C70\and 37C60\and 45G15\and 92D40}
\end{abstract}
\section{Introduction}
Integrodifference equations not only occur as temporal discretisations of integro\-differential equations or as time-$1$-maps of evolutionary differential equations, but are of interest in themselves. First and foremost, they are a popular tool in theoretical ecology to describe the dispersal of species having non-overlapping generations (see, for instance, \cite{lutscher:19} or \cite{kot:schaeffer:86,volkov:lui:07,jacobsen:jin:lewis:15}). While the theory of Urysohn or Hammerstein integral equations is now rather classical \cite{martin:76}, both numerically and analytically, our goal is here to study their iterates from a dynamical systems perspective. This means one is interested in the long term behaviour of recursions based on a fixed nonlinear integral operator. In applications, the iterates for instance represent the spatial distribution of interacting species over a habitat. One of the central questions in this context is the existence and structure of an attractor. These invariant and compact sets attract bounded subsets of an ambient state space $X$ and fully capture the asymptotics of an autonomous dynamical system \cite{hale:88,sell:you:02}. The dynamics inside the attractor can be very complicated and even chaotic \cite{day:junge:mischaikow:04}. 

Extending this situation, the main part of this paper is devoted to general nonautonomous difference equations in complete metric spaces. Their right-hand side can depend on time allowing to model the dispersal of species in temporally fluctuating environments \cite{bouhours:lewis:16,jacobsen:jin:lewis:15} being not necessarily periodic. Thus, the behaviour depends on both the initial and the actual time. This is why many dynamically relevant objects are contained in the extended state space $\Z\tm X$ (one speaks of nonautonomous sets) \cite{johnson:munoz:09}, rather than being merely subsets of the state space $X$ as in the autonomous case. Furthermore, a complete description of the dynamics in a time-variant setting necessitates a strict distinction between forward and pullback convergence \cite{johnson:munoz:09,kloeden:rasmussen:10}. For this reason only a combination of several attractor notions yields the full picture:
\begin{itemize}
	\item The \emph{pullback attractor} \cite{carvalho:langa:robinson:10,kloeden:00,kloeden:rasmussen:10,poetzsche:10b} is a compact, invariant nonautonomous set which attracts all bounded sets from the past. As fixed target problem, it is based on previous information, at a fixed time from increasingly earlier initial times. Since it consists of bounded entire solutions to a nonautonomous system (see \cite[p.~17, Cor.~1.3.4]{poetzsche:10b}), a pullback attractor can be seen as an extension of the global attractor to nonautonomous problems and apparently captures the essential dynamics to a certain point. Meanwhile the corresponding theory is widely developed in discrete and continuous time. However, pullback attractors reflect the past rather than the future of systems (see \cite{kloeden:poetzsche:rasmussen:11}) and easy examples demonstrate that differential or difference equations with identical pullback attractors might have rather different asymptotics as $t\to\infty$ and possibly feature limit sets, which are not captured by the pullback dynamics. 

	\item This led to the development of \emph{forward attractors}, which are also compact and invariant nonautonomous sets \cite{kloeden:rasmussen:10}. This dual concept depends on information from the future and given a fixed initial time, the actual time increases beyond all bounds --- they are a moving target problem. Forward attractors are not unique, independent of pullback attractors, but often do not exist. Nevertheless, we will describe forward attractors using a pullback construction, even though this has the disadvantage that information on the system over the entire time axis $\Z$ is required. 
	
	\item Therefore, it was suggested in \cite{kloeden:lorenz:16} to work with \emph{forward limit sets}, a concept related to the uniform attractor due to \cite{vishik:92}. They correctly describe the asymptotic behaviour of all forward solutions to a nonautonomous difference equation. These limit sets have forward attraction properties, but different from pullback and forward attractors, they are not (even positively) invariant and constitute a single compact set, rather than a nonautonomous set. Nonetheless, asymptotic forms of positive (and negative) invariance do hold. 
\end{itemize}
The situation for forward attractors and limit sets is not as well-established as their pullback counterparts and deserves to be developed for the above reasons. Their initial construction in \cite{kloeden:lorenz:16,kloeden:yang:16} requires a locally compact state space, but recent continuous-time results in \cite{cui:kloeden:yang:19}, which extend these to infinite-dimensional dynamical systems, will be transferred here. We indeed address nonautonomous difference equations in (not necessarily locally compact) metric and Banach spaces, introduce the mentioned attractor types and study their properties. 

This brings us to our second purpose. The above abstract setting allows concrete applications to a particularly interesting class of infinite-dimensional dynamical systems in discrete time, namely integrodifference equations (IDEs for short). We provide sufficient criteria for the existence of pullback attractors tailor-made for a quite general class of IDEs. Their right-hand sides go beyond pure integral operators and might also include superposition operators, which are used to describe populations having a sedentary fraction. Such results follow from a corresponding theory of set contractions contained in \cite[pp.~15ff]{poetzsche:10b}, \cite[pp.~79ff]{martin:76}. For completely continuous right-hand sides (i.e.,\ Urysohn operators) we construct forward limit sets and provide an application to asymptotically autonomous IDEs. We restrict to rather simple IDEs in the space of continuous functions over a compact domain as state space. More complicated equations and the behaviour of attractors under spatial discretisation will be tackled in future papers. 

The contents of this paper are as follows: In \sref{sec2} we establish the necessary terminology and provide a useful dissipativity condition for nonautonomous difference equations. The key notions related to pullback convergence, i.e.,\ limit sets and attractors are reviewed and established in \sref{sec3}. The subsequent \sref{sec4} addresses the corresponding notions in forward time. In detail, it establishes forward limit sets and their (weakened) invariance properties. For a class of asymptotically autonomous equations it is shown that their forward limit sets coincide with the global attractor of the limit equation. Moreover, a construction of forward attractors is suggested. Finally, in \sref{sec5} we provide some applications to various IDEs. In particular, we illustrate the above theoretical results by studying pullback attractors and forward limit sets. 
\paragraph{Notation}
Let $\R_+:=[0,\infty)$. A \emph{discrete interval} $\I$ is defined as the intersection of a real interval with the integers $\Z$, $\I':=\set{t\in\I:\,t+1\in\I}$ and $\N_0:=\set{0,1,2,\ldots}$. 

On a metric space $(X,d)$, $I_X$ is the identity map, $B_r(x):=\set{y\in X:\,d(x,y)<r}$ the open ball with center $x\in X$ and radius $r>0$, and $\bar B_r(0)$ denotes its closure. We write 
$
	\dist{x,A}:=\inf_{a\in A}d(x,a)
$
for the distance of $x$ from a set $A\subseteq X$ and
$
	B_r(A):=\set{x\in X:\,\dist{x,A}<r}
$
for its $r$-neighbourhood. The \emph{Hausdorff semidistance} of bounded and closed subsets $A,B\subseteq X$ is defined as
$$
	\dist{A,B}:=\sup_{a\in A}\inf_{b\in B}d(a,b).
$$
The \emph{Kuratowski measure of noncompactness} on $X$ (cf.\ \cite[pp.~16ff, I.5]{martin:76}) is denoted by $\chi:{\mathfrak B}(X)\to\R$, where ${\mathfrak B}(X)$ stands for the family of bounded subsets of $X$. 

A mapping $\eF:X\to X$ is said to be \emph{bounded}, if it maps bounded subsets of $X$ into bounded sets and \emph{globally bounded}, if $\eF(X)$ is bounded. We say a bounded $\eF$ satisfies a \emph{Darbo condition}, if there exists a real constant $k\geq 0$ such that
$$
	\chi\intoo{\eF(B)}\leq k\chi(B)\fall B\in{\mathfrak B}(X).
$$
The smallest such $k$ is the \emph{Darbo constant} $\dar{\eF}\in[0,\infty)$ of $\eF$. A \emph{completely continuous} mapping $\eF$ is bounded, continuous and satisfies $\dar{\eF}=0$. 

A subset $\cA\subseteq\I\tm X$ with $t$-\emph{fibres} $\cA(t):=\{x\in X:\,(t,x)\in\cA\}$, $t\in\I$, is called \emph{nonautonomous set}. If all fibres $\cA(t)\subseteq X$, $t\in\I$, are compact, then $\cA$ is denoted as \emph{compact} nonautonomous set and we proceed accordingly with other topological notions. Furthermore, one speaks of a \emph{bounded} nonautonomous set $\cA$, if there exists real $R>0$ and a point $x_0\in X$ such that $\cA(t)\subseteq B_R(x_0)$ holds for all $t\in\I$.

Finally, on a Banach space $X$, $L(X)$ denotes the space of bounded linear operators and $\rho(\eL)$ is the \emph{spectral radius} of a $\eL\in L(X)$. 
\section{Nonautonomous difference equations}
\label{sec2}
Unless otherwise noted, let $(X,d)$ be a complete metric space. We consider nonautonomous difference equations in the abstract form
\begin{equation}
	\tag{$\Delta$}
	\boxed{u_{t+1}=\eF_t(u_t)}
	\label{deq}
\end{equation}
with continuous right-hand sides $\eF_t:U_t\to X$ and defined on closed sets $U_t\subseteq X$, $t\in\I'$. For an \emph{initial time} $\tau\in\I$, a \emph{forward solution} to \eqref{deq} is a sequence $(\phi_t)_{\tau\leq t}$ with $\phi_t\in U_t$ satisfying 
\begin{equation}
	\phi_{t+1}\equiv\eF_t(\phi_t)
	\label{solid}
\end{equation}
for all $\tau\leq t$, $t\in\I'$, while an \emph{entire solution} $(\phi_t)_{t\in\I}$ satisfies \eqref{solid} on $\I'$. The unique forward solution starting at $\tau\in\I$ in $u_\tau\in U_\tau$ is denoted by $\vphi(\cdot;\tau,u_\tau)$; it is denoted as \emph{general solution} to \eqref{deq} and reads as
\begin{equation}
	\vphi(t;\tau,u_\tau)
	:=
	\begin{cases}
		\eF_{t-1}\circ\ldots\circ\eF_\tau(u_\tau),&\tau<t,\\
		u_\tau,&\tau=t,
	\end{cases}
	\label{process}
\end{equation}
as long as the compositions stay in $U_t$. Under the inclusion $\eF_t(U_t)\subseteq U_{t+1}$, $t\in\I'$, the general solution $\vphi(t;\tau,\cdot):U_\tau\to U_t$ exists for all $\tau\leq t$ and the \emph{process property}
\begin{equation}
	\vphi(t;s,\vphi(s;\tau,u))=\vphi(t;\tau,u)\fall\tau\leq s\leq t,\,u\in U_\tau
	\label{semigroup}
\end{equation}
holds; we introduce the nonautonomous set $\cU:=\set{(t,u)\in\I\tm X:\,u\in U_t}$. 

One denotes \eqref{deq} as $\theta$-\emph{periodic} with some $\theta\in\N$, if $\eF_t=\eF_{t+\theta}$, $U_t=U_{t+\theta}$ and tacitly $\I=\Z$ hold for all $t\in\Z$. In this case the general solution satisfies
\begin{equation}
	\vphi(t+\theta;\tau+\theta,u_\tau)=\vphi(t;\tau,u_\tau)\fall\tau\leq t,\,(\tau,u_\tau)\in\cU
	\label{noper}
\end{equation}
yielding a rather tame time-dependence. An \emph{autonomous} equation \eqref{deq} is $1$-periodic.

A nonautonomous set $\cA\subseteq\cU$ is called \emph{positively} or \emph{negatively invariant} (w.r.t.\ the difference equation \eqref{deq}), if the respective inclusion
\begin{align*}
	\eF_t\intoo{\cA(t)}&\subseteq\cA(t+1),&
	\cA(t+1)&\subseteq\eF_t\intoo{\cA(t)}\fall t\in\I'
\end{align*}
holds; an \emph{invariant} set $\cA$ is both positively and negatively invariant, that is, $\eF_t\intoo{\cA(t)}=\cA(t+1)$ for all $t\in\I'$. One denotes $\cA$ as $\theta$-\emph{periodic}, if $\cA(t)=\cA(t+\theta)$ holds for all $t\in\I$ with $t+\theta\in\I$. 

The next two subsections provide some preparations on nonautonomous difference equations in Banach spaces $(X,\norm{\cdot})$: 
\subsection{Semilinear difference equations}
Let $\eL_t\in L(X)$, $t\in\I'$, be a sequence of bounded linear operators. For a \emph{linear difference equation}
$$
	\boxed{u_{t+1}=\eL_tu_t}
$$
we define the \emph{transition operator} $\Phi:\set{(t,\tau)\in\I^2:\,\tau\leq t}\to L(X)$ by
$$
	\Phi(t,\tau)
	:=
	\begin{cases}
		\eL_{t-1}\cdots\eL_\tau,&\tau<t,\\
		I_X,&t=\tau.
	\end{cases}
$$
Then \eqref{deq} is understood as \emph{semilinear}, if its right-hand side can be represented as 
\begin{equation}
	\eF_t=\eL_t+\eN_t
	\label{rhssemi}
\end{equation}
with continuous mappings $\eN_t:U_t\to X$, $t\in\I'$. The variation of constants formula \cite[p.~100, Thm.~3.1.16]{poetzsche:10b} yields the general solution of \eqref{deq} in the form
\begin{equation}
	\vphi(t;\tau,u_\tau)
	=
	\Phi(t,\tau)u_\tau+\sum_{s=\tau}^{t-1}\Phi(t,s+1)\eN_s\intoo{\vphi(s;\tau,u_\tau)}
	\fall\tau\leq t,\,u_\tau\in U_\tau.
	\label{voc}
\end{equation}

The following result will be helpful in the construction of absorbing sets: 
\begin{lemma}\label{lemabs}
	Let $\eF_t:U_t\to U_{t+1}$ be of semilinear form \eqref{rhssemi} and suppose there exist reals $\alpha_t\geq 0$, $K\geq 1$ with
	\begin{equation}
		\norm{\Phi(t,s)}\leq K\prod_{r=s}^{t-1}\alpha_r\fall s\leq t. 
		\label{lemabs1}
	\end{equation}
	If there exist reals $a_t\geq 0$, $b_t\geq 0$ such that the nonlinearity fulfills 
	\begin{equation}
		\norm{\eN_t(u)}\leq b_t+a_t\norm{u}\fall t\in\I',\,u\in U_t,
		\label{lemabs2}
	\end{equation}
	then the general solution of \eqref{deq} satisfies the estimate
	\begin{equation}
		\norm{\vphi(t;\tau,u_\tau)}
		\leq
		K\norm{u_\tau}\prod_{r=\tau}^{t-1}(\alpha_r+Ka_r)+K\sum_{s=\tau}^{t-1}b_s\prod_{r=s+1}^{t-1}(\alpha_r+Ka_r)
		\label{lemabs3}
	\end{equation}
	for all $\tau\leq t$ and $u_\tau\in U_\tau$. 
\end{lemma}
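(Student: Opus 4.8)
The plan is to prove the estimate \eqref{lemabs3} by induction on $t \geq \tau$, starting from the variation of constants formula \eqref{voc}. The base case $t = \tau$ is immediate: both sides reduce to $K\norm{u_\tau}$ (with the convention that empty products equal $1$ and empty sums equal $0$), and since $K \geq 1$ this holds. For the inductive step, I would take norms in \eqref{voc}, apply the triangle inequality to the sum, and use the hypothesis \eqref{lemabs1} on $\norm{\Phi(t,\tau)}$ and $\norm{\Phi(t,s+1)}$ together with the growth bound \eqref{lemabs2} on $\norm{\eN_s(\vphi(s;\tau,u_\tau))}$, so that the term $\norm{\eN_s(\vphi(s;\tau,u_\tau))}$ is controlled by $b_s + a_s\norm{\vphi(s;\tau,u_\tau)}$. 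Into the latter I would substitute the inductive hypothesis \eqref{lemabs3} with $s$ in place of $t$.

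After this substitution one is left with a purely arithmetical inequality: one must show that
$$
	K\norm{u_\tau}\prod_{r=\tau}^{t-1}\alpha_r
	+\sum_{s=\tau}^{t-1}K\prod_{r=s+1}^{t-1}\alpha_r
		\Bigl(b_s+a_s\Bigl(K\norm{u_\tau}\prod_{r=\tau}^{s-1}(\alpha_r+Ka_r)+K\sum_{j=\tau}^{s-1}b_j\prod_{r=j+1}^{s-1}(\alpha_r+Ka_r)\Bigr)\Bigr)
$$
is bounded above by the right-hand side of \eqref{lemabs3}. The natural way to organise this is to separate the terms proportional to $\norm{u_\tau}$ from the terms involving the $b_s$, and for each group to reconstruct the telescoping product $\prod_{r}(\alpha_r + Ka_r)$ by repeatedly using the elementary identity
$$
	\prod_{r=s+1}^{t-1}\alpha_r\cdot\Bigl(\alpha_s+Ka_s\prod_{r=\tau}^{s-1}(\alpha_r+Ka_r)\Bigr)
	\leq
	\prod_{r=s}^{t-1}(\alpha_r+Ka_r)
$$
and its analogue for the $b_j$-terms --- i.e. a discrete Gronwall-type summation. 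I would phrase this as a short auxiliary computation: writing $\beta_r := \alpha_r + Ka_r$, the coefficient of $K\norm{u_\tau}$ telescopes because $\prod_{r=\tau}^{t-1}\alpha_r + \sum_{s=\tau}^{t-1} Ka_s \prod_{r=s+1}^{t-1}\alpha_r \prod_{r=\tau}^{s-1}\beta_r = \prod_{r=\tau}^{t-1}\beta_r$, which follows by a second, inner induction (or by recognising it as the expansion of the product). The $b_j$-terms are handled by the same mechanism after interchanging the order of the double summation over $s$ and $j$.

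The main obstacle is purely bookkeeping: keeping the ranges of the nested products and the double sums consistent (in particular the conventions for empty products when $s = \tau$ or $s = t-1$, and the reindexing when the inner sum over $j$ is pulled outside), and making sure the factor $K$ is placed correctly — it multiplies $a_r$ inside the product $(\alpha_r + Ka_r)$ because each nonlinear contribution passes through one more application of $\Phi$, but it appears only to the first power in front of each $b_s$. There is no analytic difficulty and no compactness or continuity issue enters; the estimate is a deterministic consequence of \eqref{voc}, so the entire argument is an induction wrapped around a discrete Gronwall inequality. I would keep the write-up compact by stating the telescoping identity for $\beta_r$ as a one-line lemma-free observation and then assembling the two groups of terms.
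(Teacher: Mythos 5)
Your proof is correct, but it takes a genuinely different route from the paper. The paper does not argue by induction: it divides by $e_\alpha(t,\tau):=\prod_{r=\tau}^{t-1}\alpha_r$ (which is why it first assumes $\alpha_t\neq 0$), so that the estimate from the variation of constants formula becomes a standing hypothesis for the cited discrete Gr\"onwall inequality \cite[p.~348, Prop.~A.2.1(a)]{poetzsche:10b}, and then transforms back. You instead run a (strong) induction on $t$ directly inside \eqref{voc} and verify the resulting arithmetic inequality by the expansion identity
$\prod_{r=\tau}^{t-1}(\alpha_r+Ka_r)=\prod_{r=\tau}^{t-1}\alpha_r+\sum_{s=\tau}^{t-1}Ka_s\prod_{r=s+1}^{t-1}\alpha_r\prod_{r=\tau}^{s-1}(\alpha_r+Ka_r)$,
applied once for the $\norm{u_\tau}$-terms and, after interchanging the double sum, with base point $j+1$ for each $b_j$-term; both applications check out. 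What this buys: the argument is self-contained (it re-proves the needed Gr\"onwall-type bound rather than citing it) and it dispenses entirely with the division by $\alpha_s$, hence with the case distinction $\alpha_t\neq 0$ that the paper's proof handles only implicitly. What it costs is the bookkeeping you describe, which the paper avoids by outsourcing it to the quoted lemma.

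One caveat: the first displayed ``elementary identity'' in your sketch is false as a term-by-term inequality --- for $t=s+1$ it would assert $\alpha_s+Ka_s\prod_{r=\tau}^{s-1}(\alpha_r+Ka_r)\leq\alpha_s+Ka_s$, which fails whenever the product exceeds $1$. Only the summed telescoping identity you state immediately afterwards is valid, and since that is the statement your argument actually uses (for both groups of terms), this is a misstatement to be deleted in the write-up rather than a gap in the proof. Also note the base case reads $\norm{u_\tau}\leq K\norm{u_\tau}$ (the two sides are not equal); your appeal to $K\geq 1$ is the right fix.
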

\begin{remark}[linear growth]
	In case $\eL_t\equiv 0$ on $\I'$ one can choose $K=1$, $\alpha_t=0$ in \eqref{lemabs1} and the estimate \eqref{lemabs3} simplifies to
	$$
		\norm{\vphi(t;\tau,u_\tau)}
		\leq
		\norm{u_\tau}\prod_{r=\tau}^{t-1}a_r+\sum_{s=\tau}^{t-1}b_s\prod_{r=s+1}^{t-1}a_r
		\fall\tau\leq t,\,u_\tau\in U_\tau.
	$$
\end{remark}
\begin{proof}
	Let $\tau\in\I$. It is convenient to abbreviate $e_\alpha(t,s):=\prod_{r=s}^{t-1}\alpha_r$ and we first assume that $\alpha_t\neq 0$, $t\in\I'$. Given $u_\tau\in U_\tau$, from \eqref{voc} and \eqref{lemabs1} we obtain
	\begin{eqnarray*}
		\norm{\vphi(t;\tau,u_\tau)}
		& \leq &
		Ke_\alpha(t,\tau)\norm{u_\tau}
		+
		K\sum_{s=\tau}^{t-1}e_\alpha(t,s+1)\norm{\eN_s(\vphi(s;\tau,u_\tau))}\\
		& \stackrel{\eqref{lemabs2}}{\leq} &
		Ke_\alpha(t,\tau)\norm{u_\tau}
		+
		K\sum_{s=\tau}^{t-1}e_\alpha(t,s+1)\intoo{b_s+a_s\norm{\vphi(s;\tau,u_\tau)}}
	\end{eqnarray*}
	and therefore the sequence $u(t):=\norm{\vphi(t;\tau,u_\tau)}e_\alpha(\tau,t)$ satisfies
	$$
		u(t)
		\leq
		K\norm{u_\tau}
		+
		K\sum_{s=\tau}^{t-1}b_se_\alpha(\tau,s+1)
		+
		K\sum_{s=\tau}^{t-1}\frac{a_s}{\alpha_s}u(s)\fall\tau\leq t.
	$$
	Thus, the Gr\"onwall inequality from \cite[p.~348, Prop.~A.2.1(a)]{poetzsche:10b} implies
	$$
		u(t)
		\leq
		Ke_{1+\frac{Ka}{\alpha}}(t,\tau)\norm{u_\tau}
		+
		K\sum_{s=\tau}^{t-1}b_se_\alpha(\tau,s+1)e_{1+\frac{Ka}{\alpha}}(t,s+1)
	$$
	and consequently
	$$
		\norm{\vphi(t;\tau,u_\tau)}
		\leq
		Ke_{\alpha+Ka}(t,\tau)\norm{u_\tau}+K\sum_{s=\tau}^{t-1}b_se_{\alpha+Ka}(t,s+1)
		\fall\tau\leq t,
	$$
	which is the claimed inequality \eqref{lemabs3}. 
	\qed
\end{proof}
\subsection{Additive difference equations}
We now address right-hand sides
\begin{equation}
	\eF_t=\eG_t+\eK_t, 
	\label{rhsadd}
\end{equation}
where $\eG_t:U_t\to X$ is bounded and continuous, while $\eK_t:U_t\to X$, $t\in\I'$, is assumed to be completely continuous. 
\begin{lemma}\label{lem22}
	If $\eF_t:U_t\to U_{t+1}$ is of additive form \eqref{rhsadd}, then the general solution of \eqref{deq} satisfies
	$$
		\dar{\vphi(t;\tau,\cdot)} \leq\prod_{s=\tau}^{t-1}\dar{\eG_s}\fall \tau\leq t.
	$$
\end{lemma}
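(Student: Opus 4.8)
The plan is to prove the estimate on the Darbo constant of the general solution by induction on $t-\tau$, exploiting the composition structure of $\vphi(t;\tau,\cdot)$ in \eqref{process} together with two standard facts about the Kuratowski measure of noncompactness: that $\chi$ is subadditive, i.e.\ $\chi(A+B)\leq\chi(A)+\chi(B)$ for bounded $A,B\subseteq X$, and that it is monotone. Since $\eK_s$ is completely continuous it has $\dar{\eK_s}=0$, so for any $B\in{\mathfrak B}(X)$ one has
\begin{align*}
	\chi\intoo{\eF_s(B)}
	=\chi\intoo{\eG_s(B)+\eK_s(B)}
	\leq\chi\intoo{\eG_s(B)}+\chi\intoo{\eK_s(B)}
	\leq\dar{\eG_s}\chi(B),
\end{align*}
using that $\eK_s$ maps bounded sets to relatively compact sets (so $\chi\intoo{\eK_s(B)}=0$) and $\eG_s$ satisfies its Darbo condition. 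This is the one-step inequality $\dar{\eF_s}\leq\dar{\eG_s}$.

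Next I would iterate. For $\tau\leq t$ write $\vphi(t;\tau,\cdot)=\eF_{t-1}\circ\vphi(t-1;\tau,\cdot)$ on $U_\tau$; given $B\subseteq U_\tau$ bounded, $\vphi(t-1;\tau,B)\subseteq U_{t-1}$ is bounded because each $\eF_s$ is bounded (being the sum of the bounded map $\eG_s$ and the completely continuous, hence bounded, map $\eK_s$). Applying the one-step estimate at level $t-1$ and then the induction hypothesis,
\begin{align*}
	\chi\intoo{\vphi(t;\tau,B)}
	=\chi\intoo{\eF_{t-1}\intoo{\vphi(t-1;\tau,B)}}
	\leq\dar{\eG_{t-1}}\chi\intoo{\vphi(t-1;\tau,B)}
	\leq\dar{\eG_{t-1}}\prod_{s=\tau}^{t-2}\dar{\eG_s}\chi(B),
\end{align*}
which is $\prod_{s=\tau}^{t-1}\dar{\eG_s}\chi(B)$; the base case $t=\tau$ is trivial since $\vphi(\tau;\tau,\cdot)=I_{U_\tau}$ has Darbo constant $1$ and the empty product equals $1$. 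Taking the infimum over admissible constants yields $\dar{\vphi(t;\tau,\cdot)}\leq\prod_{s=\tau}^{t-1}\dar{\eG_s}$.

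I do not expect a serious obstacle here; the argument is essentially bookkeeping once the two properties of $\chi$ are invoked. The only points requiring a little care are: (i) confirming that the relevant sets stay bounded along the iteration so that $\chi$ and the Darbo conditions apply — this follows from the standing hypothesis $\eF_t:U_t\to U_{t+1}$ together with boundedness of each $\eF_t$; and (ii) making sure the subadditivity and monotonicity of the Kuratowski measure are available, for which one cites \cite[pp.~16ff, I.5]{martin:76}. A cosmetic alternative to the induction is to compose the one-step bounds directly, but the inductive formulation is cleanest to write down.
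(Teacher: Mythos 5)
Your argument is correct and follows essentially the same route as the paper: induction along the composition \eqref{process}, using that the completely continuous part contributes Darbo constant $0$ and that Darbo constants multiply under composition, the only difference being that you unpack the cited facts from \cite[pp.~79ff]{martin:76} into explicit $\chi$-inequalities. One cosmetic remark: the step $\chi\intoo{\eF_s(B)}=\chi\intoo{\eG_s(B)+\eK_s(B)}$ should be stated via the inclusion $\eF_s(B)\subseteq\eG_s(B)+\eK_s(B)$ and monotonicity of $\chi$, which you in any case invoke.
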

\begin{proof}
	Since $\eF_t=\eG_t+\eK_t:U_t\to X$ for every $t\in\I'$ is continuous and bounded, their composition \eqref{process} is also continuous and bounded. The estimate for the Darbo constant of $\vphi(t;\tau,\cdot)$ will be established by mathematical induction. For $t=\tau$ the assertion is clear, since $\vphi(\tau;\tau,\cdot)=I_X$ and the Lipschitz constant of the identity mapping is $1$; it provides an upper bound for the Darbo constant (see \cite[p.~81, Prop.~5.3]{martin:76}). For times $t\geq\tau$, from $\dar{\eK_t\circ\vphi(t;\tau,\cdot)}=0$, which holds because $\eK_t$ is completely continuous (cf.~\cite[p.~82, Prop.~5.4]{martin:76}), it follows that 
	\begin{align*}
		\dar{\vphi(t+1;\tau,\cdot)}
		& =
		\dar{\eG_t\circ\vphi(t;\tau,\cdot)+\eK_t\circ\vphi(t;\tau,\cdot)} 
		\leq \dar{\eG_t\circ\vphi(t;\tau,\cdot)}\\
		&\leq \dar{\eG_t}\dar{\vphi(t;\tau,\cdot)}
		=
		\prod_{s=\tau}^t\dar{\eG_s}\fall\tau\leq t
	\end{align*}
	from \cite[pp.~79--80, Prop.~5.1]{martin:76}. This establishes the claim. 
	\qed
\end{proof}
\section{Pullback convergence}
\label{sec3}
In this section, suppose that $\I$ is unbounded below and that $\eF_t:U_t\to U_{t+1}$, $t\in\I'$, i.e.,\ \eqref{deq} generates a process on $\cU$. 

A difference equation \eqref{deq} is said to be \emph{pullback asymptotically compact}, if for every $\tau\in\I$, every sequence $(s_n)_{n\in\N}$ in $\N_0$ with $\lim_{n\to\infty}s_n=\infty$ and every bounded sequence $(a_n)_{n\in\N}$ with $a_n\in\cU(\tau-s_n)$, the sequence $\intoo{\vphi(\tau;\tau-s_n,a_n)}_{n\in\N}$ possesses a convergent subsequence. 
\subsection{Pullback limit sets}
The \emph{pullback limit set} $\omega_{\cA}\subseteq\cU$ of a bounded subset $\cA\subseteq\cU$ is given by the fibres
\begin{equation}
	\omega_{\cA}(\tau):=\bigcap_{0\leq s}\overline{\bigcup_{s\leq t}\vphi(\tau;\tau-t,\cA(\tau-t))}
	\fall\tau\in\I.
	\label{pb01}
\end{equation}
For pullback asymptotically compact nonautonomous difference equations \eqref{deq} it is shown in \cite[p.~14, Thm.~1.2.25]{poetzsche:10b} that $\omega_{\cA}$ is nonempty, compact, invariant and \emph{pullback attracts} $\cA$, i.e.,\ the limit relation
\begin{equation}
	\lim_{s\to\infty}\dist{\vphi(\tau;\tau-s,\cA(\tau-s)),\omega_{\cA}(\tau)}=0\fall\tau\in\I
	\label{pb02}
\end{equation}
holds. For positively invariant sets $\cA$ the defining relation \eqref{pb01} simplifies to
\begin{equation}
	\omega_{\cA}(\tau)=\bigcap_{0\leq s}\overline{\vphi(\tau;\tau-s,\cA(\tau-s))}. 
	\label{pb03}
\end{equation}
Therefore, as a fundamental tool for the construction of pullback limit sets and attractors, as well as for forward attractors in \sref{sec43}, we state
\begin{proposition}\label{prop31}
	Suppose that \eqref{deq} has a nonempty, positively invariant, closed and bounded subset $\cA\subseteq\cU$. If \eqref{deq} is pullback asymptotically compact, then the fibres
	\begin{equation}
		\cA^\star(\tau):=\bigcap_{0\leq s}\vphi(\tau;\tau-s,\cA(\tau-s))
		\fall\tau\in\I
		\label{prop31a}
	\end{equation}
	define a maximal invariant, nonempty and compact nonautonomous set $\cA^\star\subseteq\cA$, which pullback attracts $\cA$. 
\end{proposition}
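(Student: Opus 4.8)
The plan is to exploit the connection between the intersection \eqref{prop31a} and the pullback limit set $\omega_\cA$ from \eqref{pb03}, and then invoke the cited attraction and invariance properties. Since $\cA$ is positively invariant, $\eF_t(\cA(t))\subseteq\cA(t+1)$ gives $\vphi(\tau;\tau-s-1,\cA(\tau-s-1))=\vphi(\tau;\tau-s,\vphi(\tau-s;\tau-s-1,\cA(\tau-s-1)))\subseteq\vphi(\tau;\tau-s,\cA(\tau-s))$, so the sets $V_s(\tau):=\vphi(\tau;\tau-s,\cA(\tau-s))$ form a nested (decreasing in $s$) family of nonempty subsets of the closed bounded set $\cA(\tau)$. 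First I would record that each $V_s(\tau)$ is nonempty (as the image of a nonempty set) and that by pullback asymptotic compactness, together with positive invariance, the family $\{\overline{V_s(\tau)}\}_{s\geq 0}$ consists of nonempty compact sets, hence $\omega_\cA(\tau)=\bigcap_{s\geq 0}\overline{V_s(\tau)}$ is nonempty and compact (this is exactly \eqref{pb03} together with \cite[p.~14, Thm.~1.2.25]{poetzsche:10b}), and it pullback attracts $\cA$ by \eqref{pb02}; moreover $\omega_\cA\subseteq\cA$ because each $\overline{V_s(\tau)}\subseteq\overline{\cA(\tau)}=\cA(\tau)$.

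The heart of the argument is to show $\cA^\star(\tau)=\omega_\cA(\tau)$, i.e.\ that the closures in \eqref{pb03} may be dropped in this setting. The inclusion $\cA^\star(\tau)\subseteq\omega_\cA(\tau)$ is immediate since $V_s(\tau)\subseteq\overline{V_s(\tau)}$. For the reverse inclusion I would argue as follows: fix $x\in\omega_\cA(\tau)$ and $s\geq 0$; since $x\in\overline{V_{s+1}(\tau)}$ and, by pullback asymptotic compactness applied along a sequence $s_n\to\infty$ and the corresponding preimages, one can extract from any sequence in $\bigcup_{t\geq s+1}V_t(\tau)$ (each term of the form $\vphi(\tau;\tau-t,a_t)$ with $a_t\in\cU(\tau-t)$ bounded, as $\cA$ is bounded) a convergent subsequence whose limit, by the nestedness $V_t(\tau)\subseteq V_{s+1}(\tau)$ for $t\geq s+1$ and continuity of $\vphi(\tau;\tau-s-1,\cdot)$ together with compactness of $\overline{V_{s+1}(\tau)}$... — more cleanly: write $x=\vphi(\tau;\tau-s,y)$ where $y\in\omega_\cA(\tau-s)$, using that $\omega_\cA$ is invariant (hence $\omega_\cA(\tau)=\vphi(\tau;\tau-s,\omega_\cA(\tau-s))$) and that $\omega_\cA(\tau-s)\subseteq\cA(\tau-s)$; since $\omega_\cA(\tau-s)\subseteq V_0(\tau-s)=\cA(\tau-s)$ trivially but we need $y\in V_r(\tau-s)$ for all $r$ — apply invariance of $\omega_\cA$ again to write $y=\vphi(\tau-s;\tau-s-r,z)$ with $z\in\omega_\cA(\tau-s-r)\subseteq\cA(\tau-s-r)$, so $y\in V_r(\tau-s)$ for every $r\geq 0$, hence $x\in V_{s+r}(\tau)$ for every $r$; letting $r$ range over $\N_0$ gives $x\in\bigcap_{s'\geq 0}V_{s'}(\tau)=\cA^\star(\tau)$. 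This shows $\cA^\star=\omega_\cA$, whence $\cA^\star$ is nonempty, compact, invariant, contained in $\cA$, and pullback attracts $\cA$.

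Finally I would establish maximality: if $\cB\subseteq\cA$ is any invariant nonautonomous set, then for each $\tau\in\I$ and each $s\geq 0$ invariance gives $\cB(\tau)=\vphi(\tau;\tau-s,\cB(\tau-s))\subseteq\vphi(\tau;\tau-s,\cA(\tau-s))=V_s(\tau)$, so $\cB(\tau)\subseteq\bigcap_{s\geq 0}V_s(\tau)=\cA^\star(\tau)$; thus $\cA^\star$ is the maximal invariant subset of $\cA$. The main obstacle I anticipate is the careful justification that the closure in \eqref{pb03} is superfluous under positive invariance, i.e.\ the identity $\cA^\star=\omega_\cA$; the argument above routes around it by using invariance of $\omega_\cA$ to represent its points as forward images of points lying in all the sets $V_r$, but one must check the bounded-preimage hypothesis in the definition of pullback asymptotic compactness is genuinely available (it is, since $\cA$ is a bounded nonautonomous set, so $\cA(\tau-s)$ is bounded uniformly in $s$) and that invariance of $\cB$ (not merely positive invariance) is what is needed for maximality in the stated form.
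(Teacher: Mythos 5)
Your argument is correct, but it takes a genuinely different route from the paper. The paper's proof is essentially a citation: it adapts \cite[Prop.~5]{kloeden:marin:10} to discrete-time processes, invoking pullback asymptotic compactness only to guarantee that the nested intersection in \eqref{prop31a} is nonempty, and does not pass through the pullback limit set at all. You instead prove the identity $\cA^\star=\omega_{\cA}$: the inclusion $\cA^\star(\tau)\subseteq\omega_{\cA}(\tau)$ is immediate from \eqref{pb03}, and for the reverse inclusion you use the invariance of $\omega_{\cA}$ (quoted from \cite[p.~14, Thm.~1.2.25]{poetzsche:10b}, restated just before the proposition) to write $\omega_{\cA}(\tau)=\vphi(\tau;\tau-s,\omega_{\cA}(\tau-s))\subseteq\vphi(\tau;\tau-s,\cA(\tau-s))$ for every $s\geq 0$, which needs only $\omega_{\cA}\subseteq\cA$ (closedness plus positive invariance of $\cA$). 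This buys you a self-contained proof that also makes the relation between $\cA^\star$ and $\omega_{\cA}$ (hence \eqref{charpb} in the absorbing case) explicit, and your maximality argument --- any invariant (in fact already negatively invariant) $\cB\subseteq\cA$ satisfies $\cB(\tau)\subseteq\vphi(\tau;\tau-s,\cA(\tau-s))$ for all $s\geq0$, hence $\cB\subseteq\cA^\star$ --- supplies a detail the paper leaves to the cited reference.

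Two cosmetic corrections. First, your parenthetical claim that pullback asymptotic compactness together with positive invariance makes each set $\overline{\vphi(\tau;\tau-s,\cA(\tau-s))}$ compact for \emph{fixed} $s$ is unjustified and in general false in infinite dimensions: asymptotic compactness only controls sequences with pullback times $s_n\to\infty$, not images at a fixed finite pullback time. Delete this remark; nothing is lost, since nonemptiness, compactness, invariance and pullback attraction of $\omega_{\cA}$ are exactly what \cite[p.~14, Thm.~1.2.25]{poetzsche:10b} already supplies. Second, in the reverse-inclusion step the abandoned subsequence attempt and the auxiliary points $y$ and $z$ are unnecessary: the one-step consequence of invariance, $\omega_{\cA}(\tau)=\vphi(\tau;\tau-s,\omega_{\cA}(\tau-s))\subseteq\vphi(\tau;\tau-s,\cA(\tau-s))$ for each $s\geq0$, already gives $\omega_{\cA}(\tau)\subseteq\cA^\star(\tau)$.
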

If the nonautonomous set $\cA$ is even compact, then \pref{prop31} applies without the asymptotic compactness assumption. 
\begin{proof} % \dontknow Thm 5.1 from Cui, Kloeden, Yang - proved by Huy
	Since \eqref{deq} generates a continuous process $\vphi$ in discrete time, the assertion results via an adaption of \cite[Prop.~5]{kloeden:marin:10}, where pullback asymptotic compactness yields that the intersection of the nested sets in \eqref{prop31a} is nonempty. 
	\qed
\end{proof}
\subsection{Pullback attractors}
\label{sec32}
A \emph{pullback attractor} $\cA^\ast\subseteq\cU$ of \eqref{deq} is a nonempty, compact, invariant nonautonomous set which pullback attracts all bounded nonautonomous sets $\cB\subseteq\cU$. Bounded pullback attractors are unique and allow the dynamical characterisation
$$
	\cA^\ast
	=
	\set{(\tau,u)\in\cU\,
	\left|
	\begin{array}{l}
	\text{there exists a bounded entire solution}\\
	(\phi_t)_{t\in\I}\text{ of \eqref{deq} satisfying }\phi_\tau=u
	\end{array}
	\right.}
$$
(cf.~\cite[p.~17, Cor.~1.3.4]{poetzsche:10b}). Despite being pullback attracting nonautonomous sets within $\cA$, the set $\cA^\star$ constructed in \pref{prop31} needs not to be a pullback attractor, since nothing was assumed outside of $\cA$. Remedy provides the notion of a \emph{pullback dissipative} difference equation \eqref{deq}. This means there exists a bounded set $\cA\subseteq\cU$ such that for every $\tau\in\I$ and every bounded nonautonomous set $\cB\subseteq\cU$ there is an \emph{absorption time} $S=S(\tau,\cB)\in\N$ such that 
$$
	\vphi(\tau;\tau-s,\cB(\tau-s))\subseteq\cA(\tau)\fall s\geq S.
$$
For a \emph{uniformly pullback dissipative} equation \eqref{deq} the absorption time $S$ is independent of $\tau$. One denotes $\cA$ as a \emph{pullback absorbing set}. 

If $\cA$ is pullback absorbing, then the set $\cA^\star$ obtained from \pref{prop31} becomes a pullback attractor, i.e.,\ $\cA^\star=\cA^\ast$, and one has the characterisation
\begin{equation}
	\cA^\ast=\omega_{\cA}.
	\label{charpb}
\end{equation}
A possibility to construct pullback absorbing sets provides
\begin{proposition}[Pullback absorbing set]\label{proppbab}
	On a Banach space $X$, let $\rho>0$ and $\eF_t:U_t\to U_{t+1}$ be of semilinear form \eqref{rhssemi} satisfying \eqref{lemabs1}, \eqref{lemabs2}. If the limit relations
	\begin{align*}
		\lim_{s\to\infty}\prod_{r=\tau-s}^{\tau-1}(\alpha_r+Ka_r)&=0,&
		R_\tau&:=K\sum_{s=-\infty}^{\tau-1}b_s\prod_{r=s+1}^{\tau-1}(\alpha_r+Ka_r)<\infty
	\end{align*}
	hold for all $\tau\in\I$, then the difference equation \eqref{deq} is pullback dissipative with absorbing set $\cA:=\set{(\tau,u)\in\cU:\,\norm{u}\leq\rho+R_\tau}$. In case $\lim_{s\to\infty}\sup_{\tau\in\I}\prod_{r=\tau-s}^{\tau-1}(\alpha_r+Ka_r)=0$ holds, the difference equation \eqref{deq} is uniformly pullback dissipative. 
\end{proposition}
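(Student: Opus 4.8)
The plan is to get everything out of the a~priori bound \eqref{lemabs3} established in \lref{lemabs}. First I would specialise that estimate to a pullback sequence: fix $\tau\in\I$, let $s\in\N$, and apply \eqref{lemabs3} with initial time $\tau-s$, final time $\tau$, and an arbitrary $u\in\cU(\tau-s)$. After renaming the summation index this reads
\begin{equation*}
	\norm{\vphi(\tau;\tau-s,u)}
	\leq
	K\norm{u}\prod_{r=\tau-s}^{\tau-1}(\alpha_r+Ka_r)
	+K\sum_{j=\tau-s}^{\tau-1}b_j\prod_{r=j+1}^{\tau-1}(\alpha_r+Ka_r),
\end{equation*}
which is legitimate because $\eF_t:U_t\to U_{t+1}$ guarantees that the general solution $\vphi(\tau;\tau-s,\cdot):U_{\tau-s}\to U_\tau$ is globally defined.

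Next, given a bounded nonautonomous set $\cB\subseteq\cU$, I would use its boundedness to fix $M:=\sup_{t\in\I}\sup_{u\in\cB(t)}\norm{u}<\infty$ and substitute $M$ for $\norm{u}$ above, with $u\in\cB(\tau-s)$. The second summand is a partial sum of the series defining $R_\tau$; since $\alpha_r,a_r,b_j\geq 0$, these partial sums increase with $s$ and stay below their limit $R_\tau<\infty$, so this term is bounded by $R_\tau$ for \emph{every} $s$. The first summand equals $KM\prod_{r=\tau-s}^{\tau-1}(\alpha_r+Ka_r)$ and tends to $0$ as $s\to\infty$ by the first hypothesised limit relation; hence there is $S=S(\tau,\cB)\in\N$ with $KM\prod_{r=\tau-s}^{\tau-1}(\alpha_r+Ka_r)<\rho$ for all $s\geq S$. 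Combining the two contributions yields $\norm{\vphi(\tau;\tau-s,u)}<\rho+R_\tau$ for all $s\geq S$ and all $u\in\cB(\tau-s)$, i.e.\ $\vphi(\tau;\tau-s,\cB(\tau-s))\subseteq\cA(\tau)$ with $\cA(\tau)=\set{u\in U_\tau:\,\norm{u}\leq\rho+R_\tau}$; this is exactly the defining property of pullback dissipativity, and $\cA$ is a fibrewise bounded, closed nonautonomous subset of $\cU$.

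For the uniform assertion, the only modification is the choice of $S$: under $\lim_{s\to\infty}\sup_{\tau\in\I}\prod_{r=\tau-s}^{\tau-1}(\alpha_r+Ka_r)=0$ I would pick $S$ so that $KM\sup_{\tau\in\I}\prod_{r=\tau-s}^{\tau-1}(\alpha_r+Ka_r)<\rho$ for all $s\geq S$, while the estimate $R_\tau$ for the second summand already holds for each $\tau$ without any restriction on $s$; thus $S$ no longer depends on $\tau$, and \eqref{deq} is uniformly pullback dissipative.

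I do not expect a genuine obstacle: the proposition is essentially a reformulation of \lref{lemabs} in the language of absorbing sets. The only points requiring a little care are the reindexing $s\mapsto\tau-s$ of the initial time — so that partial sums of the pullback estimate match partial sums of the series $R_\tau$ — and the observation that the non-negativity of $\alpha_r,a_r,b_j$ is precisely what makes those partial sums monotone and therefore controlled by the standing assumption $R_\tau<\infty$.
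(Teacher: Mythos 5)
Your proposal is correct and is essentially the paper's own argument: the published proof is just the one-line remark that the assertion follows from \lref{lemabs} by passing to the pullback limit $s\to\infty$ in \eqref{lemabs3}, and your write-up fills in exactly the intended details (bounding the inhomogeneous term by the monotone partial sums of $R_\tau$, letting the homogeneous term drop below $\rho$ for $s\geq S(\tau,\cB)$, and choosing $S$ independent of $\tau$ under the uniform hypothesis).
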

\begin{proof}
	The assertion follows from \lref{lemabs} by passing over to the pullback limit $\tau\to-\infty$ in the estimate \eqref{lemabs3}. 
	\qed
\end{proof}

A construction of pullback attractors $\cA^\ast$ based on set contractions, rather than asymptotic compactness, is suitable for later applications to integrodifference equations (see \sref{sec5}): 
\begin{theorem}\label{thmpullback}
	If a difference equation \eqref{deq} of additive form \eqref{rhsadd} is uniformly pullback dissipative and 
	$$
		\prod_{s=-\infty}^{T-1}\dar{\eG_s}=0\quad\text{for some }T\in\I
	$$
	holds, then there exists a unique bounded pullback attractor of \eqref{deq}. 
\end{theorem}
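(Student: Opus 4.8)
The plan is to combine the dissipativity hypothesis with the Darbo-constant estimate of \lref{lem22} to verify pullback asymptotic compactness, and then to invoke the abstract machinery of \sref{sec32}. Concretely, by uniform pullback dissipativity there is a bounded pullback absorbing set $\cA\subseteq\cU$ with an absorption time $S(\tau,\cB)$ that is independent of $\tau$; without loss of generality one may pass to the positively invariant, closed, bounded nonautonomous set $\tilde\cA$ obtained by saturating $\cA$ forward (i.e.\ taking fibrewise closures of $\bigcup_{s\geq 0}\vphi(\tau;\tau-s,\cA(\tau-s))$), which is again absorbing. So the only missing ingredient for \pref{prop31} and the subsequent identification $\cA^\star=\cA^\ast=\omega_{\cA}$ in \eqref{charpb} is pullback asymptotic compactness of \eqref{deq}.

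To establish that, fix $\tau\in\I$, a sequence $s_n\to\infty$ in $\N_0$, and a bounded sequence $a_n\in\cU(\tau-s_n)$; set $B:=\set{a_n:\,n\in\N}$, a bounded subset of $X$. I want to show $\intoo{\vphi(\tau;\tau-s_n,a_n)}_{n\in\N}$ has a convergent subsequence, i.e.\ (since $X$ is complete) that its closure is compact, for which it suffices to show $\chi$ of this set is zero. For large $n$, write $s_n=(\tau-T)-(\tau-s_n)+\bigl(\text{remaining block from }T\text{ to }\tau\bigr)$; more precisely, using the process property \eqref{semigroup}, for $\tau-s_n\leq T$ one has $\vphi(\tau;\tau-s_n,a_n)=\vphi(\tau;T,\,\cdot\,)\circ\vphi(T;\tau-s_n,a_n)$. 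The tail map $\vphi(\tau;T,\cdot)$ is a fixed continuous bounded self-map with finite Darbo constant, so by subadditivity and monotonicity of $\chi$ under such maps it is enough to control $\chi\bigl(\set{\vphi(T;\tau-s_n,a_n):\,n}\bigr)$. Here I apply \lref{lem22} with initial time $\tau-s_n$ and terminal time $T$: since the $a_n$ all lie in a single bounded set $B$ and $\vphi(T;\tau-s_n,\cdot)$ maps $B\cap U_{\tau-s_n}$ into a bounded set whose noncompactness measure is at most $\bigl(\prod_{j=\tau-s_n}^{T-1}\dar{\eG_j}\bigr)\chi(B)$; hence
$$
	\chi\Bigl(\bigcup_{n}\vphi(T;\tau-s_n,a_n)\Bigr)
	\leq
	\chi(B)\,\sup_{n}\,\prod_{j=\tau-s_n}^{T-1}\dar{\eG_j}
	\leq
	\chi(B)\prod_{j=-\infty}^{T-1}\dar{\eG_j}
	=
	0,
$$
where the last equality is the hypothesis and the middle inequality uses that each $\dar{\eG_j}\geq 0$ so the finite products over $[\tau-s_n,T-1]$ are bounded above by the infinite product $\prod_{j=-\infty}^{T-1}\dar{\eG_j}$ once $\tau-s_n\leq -M$ for the $M$ making that tail product small (and those are absorbed by the absorbing property for the finitely many remaining $n$). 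Pushing through $\vphi(\tau;T,\cdot)$ gives $\chi\bigl(\set{\vphi(\tau;\tau-s_n,a_n):\,n}\bigr)=0$, so the set is relatively compact, proving pullback asymptotic compactness.

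With pullback asymptotic compactness and a pullback absorbing set in hand, \pref{prop31} applied to $\tilde\cA$ yields a nonempty, compact, invariant nonautonomous set $\cA^\star\subseteq\tilde\cA$ that pullback attracts $\tilde\cA$; since $\tilde\cA$ absorbs every bounded nonautonomous set $\cB$, the set $\cA^\star$ pullback attracts all such $\cB$, hence is a pullback attractor $\cA^\ast$. Uniqueness of bounded pullback attractors is the standard two-sided attraction argument recalled in \sref{sec32}. I expect the only real subtlety to be bookkeeping in the asymptotic-compactness step: splitting each orbit at the fixed time $T$, handling the finitely many indices $n$ with $\tau-s_n>T$ separately (for these $\vphi(\tau;\tau-s_n,a_n)$ ranges over a bounded set acted on by finitely many bounded continuous maps, so relative compactness there follows from the same Darbo estimate once $s_n$ is large, and the finitely many small $s_n$ contribute only finitely many points), and making sure the interchange of $\sup_n$ with the infinite product is justified by nonnegativity of the Darbo constants. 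The additive structure \eqref{rhsadd} enters solely through \lref{lem22}, and the completely continuous part $\eK_t$ contributes nothing to the noncompactness estimate — which is exactly what makes the argument work.
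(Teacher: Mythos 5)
There is a genuine gap in the compactness step. The decisive inequality you write,
$$
	\chi\Bigl(\textstyle\bigcup_{n}\vphi(T;\tau-s_n,a_n)\Bigr)
	\leq
	\chi(B)\,\sup_{n}\,\prod_{j=\tau-s_n}^{T-1}\dar{\eG_j},
$$
is not valid: the points $\vphi(T;\tau-s_n,a_n)$ are images of single points under \emph{different} maps, and the Kuratowski measure of an \emph{infinite} union is not controlled by the supremum of the measures of the individual image sets (only finite unions satisfy $\chi(\bigcup_{i\le k}A_i)=\max_i\chi(A_i)$). For instance, in $\ell^2$ a sequence of points $p_n$ with $p_n\in C_n$ and $\chi(C_n)\to 0$ (say $C_n$ a ball of radius $1/n$ around $2e_n$) still satisfies $\chi(\set{p_n:n\in\N})>0$. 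So knowing that each $\vphi(T;\tau-s_n,\cdot)$ has small Darbo constant does not make the \emph{sequence} $\bigl(\vphi(T;\tau-s_n,a_n)\bigr)_n$ relatively compact. (The auxiliary claim that the partial products $\prod_{j=\tau-s_n}^{T-1}\dar{\eG_j}$ are ``bounded above by the infinite product'' is also false as stated, but your $\eps$--$M$ parenthesis shows you meant the limit of partial products; that is not the real problem.) A related omission: uniform boundedness over $n$ of the sets you take $\chi$ of is exactly what the dissipativity is needed for, yet absorption never enters your compactness argument.

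The fix is to let the absorbing set do the work \emph{inside} the estimate, which is essentially what the paper does (it verifies that $\vphi(t;\tau,\cdot)$ is $\hat\cB$-contracting via \lref{lem22}, shows the $S$-truncated orbit bundles are bounded using uniform pullback dissipativity, and then cites \cite[p.~16, Prop.~1.2.30; p.~19, Thm.~1.3.9]{poetzsche:10b}). Done directly: fix $\tau$ and, for each $m\in\N$, note that uniform absorption gives $\vphi(\tau-m;\tau-s_n,a_n)\in\cA(\tau-m)$ for all $n$ with $s_n\geq m+S(\cB)$, whence by the process property all but finitely many members of the sequence lie in the \emph{single} set $\vphi\bigl(\tau;\tau-m,\cA(\tau-m)\bigr)$. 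Now \lref{lem22} applies to this one map and one bounded set and yields
$\chi\bigl(\set{\vphi(\tau;\tau-s_n,a_n):n\in\N}\bigr)\leq\prod_{j=\tau-m}^{\tau-1}\dar{\eG_j}\,\chi\bigl(\cA(\tau-m)\bigr)$, where $\chi(\cA(\tau-m))$ is bounded uniformly in $m$ because $\cA$ is a bounded nonautonomous set; letting $m\to\infty$ and splitting off the fixed finite product between $T$ and $\tau$ drives the right-hand side to $0$. With pullback asymptotic compactness established this way, your concluding steps (\pref{prop31}, upgrading attraction of the absorbing set to attraction of all bounded sets, and uniqueness of the bounded pullback attractor) are fine and match \sref{sec32}; it is only the splitting at the fixed time $T$ without funnelling the tail through the absorbing set at the moving intermediate time $\tau-m$ that breaks down.
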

\begin{remark}[periodic equations]
	For $\theta$-periodic difference equations \eqref{deq} and sets $\cA$, it results from \eqref{noper} that also the pullback limit sets $\omega_\cA$ from \eqref{pb01}, the set $\cA^\star$ from \pref{prop31} and the pullback attractor $\cA^\ast$ are $\theta$-periodic (cf.\ \cite[pp.~21ff, Sect.~1.4]{poetzsche:10b}). Furthermore, \tref{thmpullback} applies when $\prod_{s=0}^{\theta-1}\dar{\eG_s}<1$. 
\end{remark}
\begin{proof} 
	The terminology of \cite{poetzsche:10b} and results therein will be used. Let $\hat\cB$ denote the family of all bounded sets in $\I\tm X$. Then \lref{lem22} ensures that the general solution $\vphi(t;\tau,\cdot)$ is $\hat\cB$-con\-trac\-ting in the sense of \cite[p.~15, Def.~1.2.26(i)]{poetzsche:10b}. 

Since \eqref{deq} has a bounded absorbing set $\cA$, for every bounded nonautonomous set $\cB$, there exists an $S\in\N_0$ such that $\vphi(\tau;\tau-s,\cB(\tau-s))\subseteq\cA(t)$ holds for all $s\geq S$. This implies that the $S$-truncated orbit $\gamma_\cB^S$, fibrewise given by
	$$
		\gamma_\cB^S(\tau)
		:=
		\bigcup_{s\geq S}\vphi(\tau;\tau-s,\cB(\tau-s))\subseteq\cA(\tau)\fall\tau\in\I, 
	$$
	is bounded. Hence, \cite[p.~16, Prop.~1.2.30]{poetzsche:10b} implies that \eqref{deq} is $\hat\cB$-asymptotically compact, so \eqref{deq} has a pullback attractor $\cA^\ast$ by \cite[p.~19, Thm.~1.3.9]{poetzsche:10b}. 
	
	Finally, the pullback attractor $\cA^\ast$ is contained in the closure of the absorbing set $\cA$, so is bounded and thus uniquely determined. 
	\qed
\end{proof}
\section{Forward convergence}
\label{sec4}
In the previous section, we constructed pullback attractors of pullback asymptotically compact nonautonomous difference equations \eqref{deq} as pullback limit sets of such absorbing sets. Our next aim is to provide related notions in forward time. Due to the conceptional difference between pullback and forward convergence some modifications are necessary, yet. 

Above all, this requires a discrete interval $\I$ to be unbounded above. Now the right-hand sides $\eF_t:U_t\to U_{t+1}$, $t\in\I$, are defined on a common closed subset $U_t=U\subseteq X$, i.e.,\ the extended state space $\cU=\I\tm U$ has constant fibres. Therefore, the general solution $\vphi:\set{(t,\tau,u)\in\I\tm\cU:\,\tau\leq t}\to U$ is well-defined.

Given a nonautonomous set $\cA\subseteq\cU$, a difference equation \eqref{deq} is said to be 
\begin{itemize}
	\item \emph{$\cA$-asymptotically compact}, if there exists a compact set $K\subseteq U$ such that $K$ forward attracts $\cA(\tau)$, i.e.,\
	\begin{align*}
		\lim_{s\to\infty}\dist{\vphi(\tau+s;\tau,\cA(\tau)),K}=0 \fall\tau\in\I, 
	\end{align*}
	
	\item \emph{strongly $\cA$-asymptotically compact}, if there exists a compact set $K\subseteq U$ so that every sequence $\intoo{(s_n,\tau_n)}_{n\in\N}$ in $\N\tm\I$ with $s_n\to\infty$, $\tau_n\to\infty$ as $n\to\infty$ yields
	\begin{align*}
		\lim_{n\to\infty}\dist{\vphi(\tau_n+s_n;\tau_n,\cA(\tau_n)), K}=0. 
	\end{align*}
\end{itemize}
\begin{remark}
	If $\cA$ is positively invariant, then strong $\cA$-asymptotic compactness (needed in \tref{thm410} below) is a tightening of $\cA$-asymptotic compactness (required in \tref{thm49}). Indeed, suppose that the sequence $(\dist{\vphi(t_n;\tau,\cA(\tau)),K})_{n\in\N}$ does not converge to $0$. Hence, the strong $\cA$-asymptotic compactness of \eqref{deq} and positive invariance of $\cA$ yields the contradiction
	\begin{align*}
		\dist{\vphi(t_n;\tau,\cA(\tau)),K}
		&=\dist{\eF_{t_n}(\vphi(t_n-1;\tau,\cA(\tau)),K}\\
		&\leq
		\dist{\eF_{t_n}(\cA(t_n-1)),K}
		\xrightarrow[n\to\infty]{}
		0.
	\end{align*}
\end{remark}
\subsection{Forward limit sets}
Let us investigate the forward dynamics of \eqref{deq} inside a nonautonomous set $\cA$. We first capture the forward limit points from a single fibre $\cA(\tau)$: 
\begin{lemma}\label{lem42}
	Suppose that $\cA\neq\emptyset$ is a bounded nonautonomous set. If \eqref{deq} is $\cA$-asymp\-totically compact with a compact subset $K\subseteq U$, then the fibres
	\begin{align}
	\Omega_\cA(\tau):=\bigcap_{0\leq s}\overline{\bigcup_{s\leq t}\vphi(\tau+t;\tau,\cA(\tau))}\subseteq K
	\fall\tau\in\I
	\label{lem42a}
	\end{align}
	are nonempty, compact, and forward attract $\cA(\tau)$, i.e.,\ 
	\begin{equation}
		\lim_{s\to\infty}\dist{\vphi(\tau+s;\tau,\cA(\tau)),\Omega_\cA(\tau)}=0.
		\label{lem42b}
	\end{equation} 
\end{lemma}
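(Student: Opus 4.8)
The plan is to mimic the classical $\omega$-limit set argument, but with the index set $\N$ playing the role of the time that runs to infinity. First I would fix $\tau\in\I$ and abbreviate the forward orbit from the fibre as $\gamma_s:=\overline{\bigcup_{s\le t}\vphi(\tau+t;\tau,\cA(\tau))}$, so that $\Omega_\cA(\tau)=\bigcap_{s\ge 0}\gamma_s$ is an intersection of a nested, decreasing family of closed sets. The key observation is the $\cA$-asymptotic compactness hypothesis: since $K$ forward attracts $\cA(\tau)$, for every $\eps>0$ there is $s_0$ with $\vphi(\tau+t;\tau,\cA(\tau))\subseteq B_\eps(K)$ for all $t\ge s_0$, hence $\gamma_{s_0}\subseteq\overline{B_\eps(K)}$. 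Thus each $\gamma_s$ with $s$ large is a closed subset of a relatively compact set (an $\eps$-neighbourhood of the compact set $K$), so the $\gamma_s$ are themselves compact for $s$ large. A nested intersection of nonempty compact sets is nonempty and compact; the inclusion $\Omega_\cA(\tau)\subseteq K$ follows because $\Omega_\cA(\tau)\subseteq\overline{B_\eps(K)}$ for every $\eps>0$ and $K$ is closed.

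For the forward attraction statement \eqref{lem42b} I would argue by contradiction in the standard way. Suppose $\dist{\vphi(\tau+s;\tau,\cA(\tau)),\Omega_\cA(\tau)}\not\to 0$; then there exist $\eps>0$, a sequence $s_n\to\infty$, and points $a_n\in\cA(\tau)$ with $\dist{\vphi(\tau+s_n;\tau,a_n),\Omega_\cA(\tau)}\ge\eps$. By $\cA$-asymptotic compactness the sequence $b_n:=\vphi(\tau+s_n;\tau,a_n)$ eventually lies in a fixed $\eps/2$-neighbourhood of the compact $K$, hence has a convergent subsequence $b_{n_k}\to b\in U$. On one hand $\dist{b,\Omega_\cA(\tau)}\ge\eps$, so $b\notin\Omega_\cA(\tau)$. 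On the other hand, for any fixed $s\ge 0$ we have $b_{n_k}\in\bigcup_{s\le t}\vphi(\tau+t;\tau,\cA(\tau))$ for all $k$ large (since $s_{n_k}\ge s$ eventually), so the limit satisfies $b\in\gamma_s$; as $s$ was arbitrary, $b\in\bigcap_{s\ge 0}\gamma_s=\Omega_\cA(\tau)$, a contradiction. Hence \eqref{lem42b} holds.

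I expect the only genuinely delicate point is making sure the hypotheses are strong enough to deliver \emph{compactness} of the $\gamma_s$ rather than merely that cluster points land in $K$: one must use that $K$ is compact (so $\overline{B_\eps(K)}$ is compact, at least after noting $X$ is a complete metric space and $\overline{B_\eps(K)}$ is totally bounded as a bounded neighbourhood of a compact set) — this is exactly where completeness of $(X,d)$ and compactness of $K$ are both used. Everything else is the routine diagonal/subsequence bookkeeping of limit-set arguments, and the structure is entirely parallel to the pullback statement recorded around \eqref{pb01}–\eqref{pb02}, with $\vphi(\tau;\tau-t,\cdot)$ replaced by $\vphi(\tau+t;\tau,\cdot)$ and the attracting object a single compact set $K$ instead of the pullback limit set.
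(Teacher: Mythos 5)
Your proposal founders on a compactness claim that is false in the setting of this paper. You assert that $\overline{B_\eps(K)}$ is compact (``totally bounded as a bounded neighbourhood of a compact set''), and you use this twice: once to make the tails $\gamma_s$ compact so that $\Omega_\cA(\tau)=\bigcap_{0\leq s}\gamma_s$ is a nonempty nested intersection, and once to extract a convergent subsequence from $b_n=\vphi(\tau+s_n;\tau,a_n)$ merely because the $b_n$ eventually lie in a fixed $\tfrac{\eps}{2}$-neighbourhood of $K$. In a general complete metric (or infinite-dimensional Banach) space a bounded neighbourhood of a compact set is \emph{not} totally bounded: take $K=\set{0}$ in $\ell^2$, where $\overline{B_\eps(0)}$ is not compact and the sequence $\tfrac{\eps}{2}e_n$ of scaled basis vectors lies in $B_\eps(K)$ but has no convergent subsequence. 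Since the whole point of this lemma is to work without local compactness of the state space, both the nonemptiness argument and the subsequence extraction in your attraction argument have a genuine gap; nested nonempty \emph{closed} sets (which is all you actually have) can have empty intersection.

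The repair is exactly where the hypothesis must be used more carefully, and it is the route the paper takes: $\cA$-asymptotic compactness gives $\dist{y_n,K}\to 0$ for $y_n=\vphi(\tau+s_n;\tau,a_n)$ with $s_n\to\infty$; because $K$ is compact the infimum is attained, so one picks $k_n\in K$ with $d(y_n,k_n)=\dist{y_n,K}\to 0$, extracts a subsequence $k_{n_j}\to\bar k\in K$, and concludes $y_{n_j}\to\bar k$, whence $\bar k\in\Omega_\cA(\tau)$ by the sequential characterisation (\rref{rem43}). This single device yields nonemptiness, and the same nearest-point extraction (applied to the points $\tilde y_n$ chosen within $\tfrac{\eps}{2}$ of the possibly unattained supremum in the Hausdorff semidistance) replaces your faulty subsequence step in the attraction proof; the rest of your contradiction argument (the limit lies in every $\gamma_s$, hence in $\Omega_\cA(\tau)$) is fine and agrees with the paper. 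Note also that your argument for $\Omega_\cA(\tau)\subseteq K$ via $\Omega_\cA(\tau)\subseteq\overline{B_\eps(K)}$ for every $\eps>0$ is correct and, combined with closedness of $\Omega_\cA(\tau)$, gives compactness more directly than the paper's sequential argument — but it cannot substitute for the missing nonemptiness step.
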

An analogous result for pullback limit sets is given in \cite[p.~9, Lemma~1.2.12]{poetzsche:10b}. 
\begin{remark}[characterisation of $\Omega_{\cA}(\tau)$]\label{rem43}
	The fibres $\Omega_\cA(\tau)$, $\tau\in\I$, consist of points $v$ such that there is a sequence $\intoo{(s_n,a_n)}_{n\in\N}$ with $\lim_{n\to\infty}s_n=\infty$, $a_n\in\cA(\tau)$ and
	\begin{equation}
		\lim_{n\to\infty}\vphi(\tau+s_n;\tau,a_n)=v.
		\label{rem43a}
	\end{equation}
	This readily yields the monotonicity $\cA_1\subseteq\cA_2$ $\Rightarrow$ $\Omega_{\cA_1}(\tau)\subseteq\Omega_{\cA_2}(\tau)$ for all $\tau\in\I$. 
\end{remark}
\begin{proof} % \dontknow Lemma 2.2 in \cite{cui:kloeden:yang:19} - proved by Huy
	Let $\tau\in\I$. Given a sequence $y_n:=\vphi(\tau+s_n;\tau,a_n)\in\vphi(\tau+s_n;\tau,\cA(\tau))$ with $s_n\xrightarrow[n\to\infty]{}\infty$ and $a_n\in\cA(\tau)$, by the $\cA$-asymptotic compactness of \eqref{deq}, we obtain
	\begin{align*}
		0\leq\dist{y_n,K}\leq\dist{\vphi(\tau+s_n,\tau,\cA(\tau)),K}\xrightarrow[n\to\infty]{}0.
	\end{align*}
	Since $K\subseteq U$ is compact, $\dist{y_n,K}=\min_{k\in K}d(y_n,k)$. This implies that there exist a sequence $(k_n)_{n\in\N}$ in $K$ satisfying
	\begin{align*}
		d(y_n,k_n) = \dist{y_n,K}\xrightarrow[n\to\infty]{}0,
	\end{align*}
	and a subsequence $(k_{n_j})_{j\in\N}$ converging to $\bar{k}\in K$. Thus,
	\begin{align*}
		0\leq d\intoo{y_{n_j},\bar{k}}
		\leq d\intoo{y_{n_j},k_{n_j}}+d\intoo{k_{n_j},\bar{k}}
		=\dist{y_{n_j},K}+d\intoo{k_{n_j},\bar{k}}\xrightarrow[j\to\infty]{}0,
	\end{align*}
	which implies that the subsequence $(y_{n_j})_{j\in\N}$ converges to $\bar{k}$. Hence, by the characterisation \eqref{rem43a}, $\bar{k}\in\Omega_\cA(\tau)$, i.e.,\ $\Omega_\cA(\tau)$ is nonempty.

	Now choose a sequence $(v_n)_{n\in\N}$ in $\Omega_\cA(\tau)$. By \rref{rem43}, for each fixed $n\in\N$, there is a sequence $\intoo{(s_m^n,a_m^n)}_{m\in\N}$ satisfying $\lim_{m\to\infty}s_m^n=\infty$ and $a_m^n\in\cA(\tau)$ for all $m\in\N$ such that
	\begin{align*}
		\lim_{m\to\infty}\vphi(\tau+s_m^n;\tau,a_m^n)=v_n,
	\end{align*}
	i.e.,\ for every $\eps>0$, there is a $M=M(n,\eps)\in\N$ such that
	\begin{align*}
		d\intoo{\vphi(\tau+s_m^n;\tau,a_m^n),v_n}<\eps\fall m\geq M.
	\end{align*}
	Since $\lim_{m\to\infty}s_m^n=\infty$, there is a $M'=M'(n)\in\N$ satisfying $s_m^n>n$ for all $m\geq M'$. Pick $\eps=\frac{1}{n}$, $(\bar s_n)_{n\in\N}$ as a subsequence $(s_{m_n}^n)_{n\in\N}$ of $(s_m^n)_{n\in\N}$ and $(\bar a_n)_{n\in\N}$ as a subsequence $(a_{m_n}^n)_{n\in\N}$ of $(a_m^n)_{n\in\N}$ in such a way that
	\begin{align*}
		m_1&=\max\set{1,M,M'}+1,&
		m_{n+1}&=\max\set{1,M,M',m_n}+1 \fall n\in\N.
	\end{align*}
	Clearly, $m_{n+1}>\max\set{1,M,M',m_n}$ for $n\in\N$. Hence, we constructed a sequence $\intoo{(\bar s_n,\bar a_n)}_{n\in\N}$ with $\lim_{n\to\infty}\bar s_n=\infty$ and $\bar a_n\in\cA(\tau)$ such that
	\begin{align*}
		d\intoo{\vphi(\tau+\bar s_n;\tau,\bar a_n),v_n}<\tfrac{1}{n} \fall n\in\N.
	\end{align*}
	Therefore,
	\begin{align*}
		0\leq\dist{v_n,K} &\leq d\intoo{\vphi(\tau+\bar s_n;\tau,\bar a_n),v_n}+\dist{\vphi(\tau+\bar s_n;\tau,\cA(\tau)),K}\\
		&\leq \tfrac{1}{n}+\dist{\vphi(\tau+\bar s_n;\tau,\cA(\tau)),K} \xrightarrow[n\to\infty]{}0.
	\end{align*}
	Similarly as above, because $K$ is compact, there is a subsequence $(v_{n_j})_{j\in\N}$ converging to $\bar{v}\in K$. Moreover, since $\Omega_\cA(\tau)$ is closed by definition, $\bar{v}\in\Omega_\cA(\tau)$, which implies that $\Omega_\cA(\tau)$ is compact. Also note that $\lim_{n\to\infty}\dist{v_n,K}=0$, so $v_n\in K$, i.e.,\ $\Omega_\cA(\tau)\subseteq K$.
	
	Suppose that $\Omega_\cA(\tilde{\tau})$ does not forward attract $\cA(\tilde{\tau})$ for some $\tilde{\tau}\in\I$, i.e.,\ there exist a real $\tilde{\eps}>0$ and a sequence $(\tilde{s}_n)_{n\in\N}$ in $\N$ with $\lim_{n\to\infty}\tilde{s}_n=\infty$ and
	\begin{align}
		\dist{\vphi(\tilde{\tau}+\tilde{s}_n;\tilde{\tau},\cA(\tilde{\tau})),\Omega_\cA(\tilde{\tau})}\geq \tilde{\eps} \fall n\in\N.
		\label{star1}
	\end{align}
	Although the supremum in the Hausdorff semidistance in the left-hand side of \eqref{star1} may not be attained due to no condition ensuring that the image $\vphi(\tilde{\tau}+s_n;\tilde{\tau},\cA(\tilde{\tau}))$ is compact, there still exists a point $\tilde{y}_n:=\vphi(\tilde{\tau}+\tilde{s}_n;\tilde{\tau},\tilde{a}_n) \in\vphi(\tilde{\tau}+\tilde{s}_n;\tilde{\tau},\cA(\tilde{\tau}))$ for each $n\in\N$ with $\tilde{a}_n\in\cA(\tilde{\tau})$ such that
	\begin{align*}
		\dist{\vphi(\tilde{\tau}+\tilde{s}_n;\tilde{\tau},\cA(\tilde{\tau})),\Omega_\cA(\tilde{\tau})}-\tfrac{\tilde{\eps}}{2} &\leq \dist{\tilde{y}_n,\Omega_\cA(\tilde{\tau})}\\
		&\leq \dist{\vphi(\tilde{\tau}+\tilde{s}_n;\tilde{\tau},\cA(\tilde{\tau})},\Omega_\cA(\tilde{\tau})).
	\end{align*}
	The above inequalities in fact give $\dist{\tilde{y}_n,\Omega_\cA(\tilde{\tau})}\geq\tfrac{\tilde{\eps}}{2}$ for all $n\in\N$. On the other hand, since $\tilde{y}_n\in\vphi(\tilde{\tau}+s_n;\tilde{\tau},\cA(\tilde{\tau}))$, we obtain
	\begin{align*}
		\dist{\tilde{y}_n,K}\leq \dist{\vphi(\tilde{\tau}+\tilde{s}_n;\tilde{\tau},\cA(\tilde{\tau})),K}\xrightarrow[n\to\infty]{}0
	\end{align*}
	and thus $\tilde{y}_n\in K$. Moreover, since $K$ is compact, there is a convergent subsequence $(\tilde{y}_{n_j})_{j\in\N}$ with limit $\tilde{y}\in K$. This shows $\tilde{y}\in\Omega_\cA(\tilde{\tau})$ by definition, and thus
	\begin{align*}
		\dist{\tilde{y},\Omega_\cA(\tilde{\tau})}<\eps \fall\eps>0,
	\end{align*}
	a contradiction to \eqref{star1}. Hence, every $\Omega_\cA(\tau)$ must forward attract $\cA(\tau)$. 
	\qed
\end{proof}

\begin{corollary}\label{cor44}
	If in addition $\cA$ is positively invariant w.r.t.\ \eqref{deq}, then the inclusions $\Omega_\cA(\tau)\subseteq\Omega_\cA(\tau+1)$ hold for all $\tau\in\I$. 
\end{corollary}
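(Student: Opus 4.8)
The plan is to argue directly from the sequential characterisation of the forward limit set recorded in \rref{rem43}. Fix $\tau\in\I$ and pick $v\in\Omega_\cA(\tau)$. By \eqref{rem43a} there is a sequence $\intoo{(s_n,a_n)}_{n\in\N}$ with $\lim_{n\to\infty}s_n=\infty$, $a_n\in\cA(\tau)$ for all $n$, and $\lim_{n\to\infty}\vphi(\tau+s_n;\tau,a_n)=v$. Discarding finitely many indices, we may assume $s_n\geq 1$ throughout.

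The idea is to push these data forward by one time step. Set $b_n:=\eF_\tau(a_n)=\vphi(\tau+1;\tau,a_n)$; positive invariance of $\cA$ gives $\eF_\tau\intoo{\cA(\tau)}\subseteq\cA(\tau+1)$, hence $b_n\in\cA(\tau+1)$. With $\sigma_n:=s_n-1\geq 0$ we have $\lim_{n\to\infty}\sigma_n=\infty$, and the process property \eqref{semigroup}, applied to the triple $\tau\leq\tau+1\leq\tau+s_n$, gives
\[
	\vphi(\tau+1+\sigma_n;\tau+1,b_n)
	=\vphi(\tau+s_n;\tau+1,\vphi(\tau+1;\tau,a_n))
	=\vphi(\tau+s_n;\tau,a_n)
	\xrightarrow[n\to\infty]{}v.
\]
Thus $\intoo{(\sigma_n,b_n)}_{n\in\N}$ realises $v$ as a forward limit point out of the fibre $\cA(\tau+1)$, so $v\in\Omega_\cA(\tau+1)$ by \rref{rem43}. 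Since $v\in\Omega_\cA(\tau)$ and $\tau\in\I$ were arbitrary, this yields $\Omega_\cA(\tau)\subseteq\Omega_\cA(\tau+1)$ for all $\tau\in\I$.

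I do not anticipate any real obstacle here. The delicate points behind \lref{lem42} --- above all that the images $\vphi(\tau+s;\tau,\cA(\tau))$ need not be compact, so that the suprema in the relevant Hausdorff semidistances need not be attained --- do not arise, precisely because the whole argument runs through the clean sequential description of \rref{rem43} rather than through neighbourhoods or semidistances. The sole bookkeeping nuisance is the index shift $s_n\mapsto s_n-1$, which requires dropping an initial segment of the sequence so that $\sigma_n$ remains in $\N_0$; this is harmless as $s_n\to\infty$.
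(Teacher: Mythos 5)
Your proof is correct and follows essentially the same route as the paper: the paper also shifts the initial time by one step using positive invariance $\eF_\tau\intoo{\cA(\tau)}\subseteq\cA(\tau+1)$ together with the process property \eqref{semigroup}, obtaining $\vphi(\tau+s;\tau,\cA(\tau))\subseteq\vphi(\tau+s;\tau+1,\cA(\tau+1))$ and hence the nesting of the fibres. You merely phrase this set inclusion pointwise through the sequential characterisation of \rref{rem43}, which is an equivalent (and, if anything, slightly cleaner) formulation of the same argument.
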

Owing to the positive invariance of $\cA$, every $\Omega_\cA(\tau)$ can also be written as
\begin{align}
	\Omega_\cA(\tau) = \bigcap_{0\leq s} \overline{\vphi(\tau+s;\tau,\cA(\tau))}\fall\tau\in\I.
	\label{lem42c}
\end{align}
Comparing the respective relations \eqref{lem42a} and \eqref{pb01}, \eqref{lem42b} and \eqref{pb02}, \eqref{lem42c} and \eqref{pb03} shows that the fibres $\Omega_{\cA}(\tau)$ are counterparts to the pullback limit set $\omega_{\cA}$. However, their invariance property is missing. This is easily demonstrated by
\begin{example}\label{exninv}
	Let $\I=\N_0$ and $\alpha\in(0,\tfrac{1}{2}]$. The difference equation $u_{t+1}=\alpha u_t+\alpha^t$ in $\R$ possesses the positively invariant and bounded set $\cA=\N_0\tm[-\tfrac{1}{\alpha},\tfrac{1}{\alpha}]$. This yields the apparently not even positively invariant sets $\Omega_\cA(\tau)=\set{0}$ for all $\tau\in\N_0$. 
\end{example}
\begin{proof} % \dontknow Lemma 2.2 in \cite{cui:kloeden:yang:19} - proved by Huy
	Let $\tau\in\I$. Given a point $v\in\Omega_\cA(\tau)$, thanks to $\eF_\tau\intoo{\cA(\tau)}\subseteq\cA(\tau+1)$ and the process property \eqref{semigroup}, we obtain 
	\begin{align*}
		v\in\vphi(\tau+s;\tau,\cA(\tau))\subseteq\vphi(\tau+s;\tau+1,\cA(\tau+1))\subseteq U \fall s>0.
	\end{align*}
	This implies $\Omega_\cA(\tau)\subseteq\Omega_\cA(\tau+1)\subseteq U$.
	\qed
\end{proof}

While the fibres $\Omega_{\cA}(\tau)$ from \lref{lem42} yield the long term behaviour starting from a single fibre of $\cA$, the following result addresses all forward limit sets of \eqref{deq} originating from within an entire nonautonomous set $\cA$. 
\begin{theorem}[Forward $\omega$-limit sets]\label{thm46}
	Suppose that $\cA\neq\emptyset$ is positively invariant and bounded. If \eqref{deq} is $\cA$-asymptotically compact with a compact subset $K\subseteq U$, then 		
	\begin{align*}
		\omega_{\cA}^-&:=\bigcap_{\tau\in\I}\Omega_\cA(\tau),&
		\ocA&:=\overline{\bigcup_{\tau\in\I}\Omega_\cA(\tau)}\subseteq K
	\end{align*}
	are nonempty and compact. In particular, $\omega_{\cA}^+$ forward attracts $\cA$, i.e.,\ 
	\begin{align*}
		\lim_{s\to\infty}\dist{\varphi(\tau+s;\tau,\cA(\tau)),\omega_{\cA}^+}&=0
		\fall\tau\in\I
	\end{align*}
	and called \emph{forward $\omega$-limit set} of $\cA$. 
\end{theorem}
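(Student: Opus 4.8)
The plan is to obtain the assertion almost entirely from the already established \lref{lem42} and \cref{cor44}. By \lref{lem42} each fibre $\Omega_\cA(\tau)$, $\tau\in\I$, is a nonempty compact subset of $K$, and \cref{cor44} together with the positive invariance of $\cA$ shows that the family $\intoo{\Omega_\cA(\tau)}_{\tau\in\I}$ is nondecreasing, i.e.\ $\Omega_\cA(\sigma)\subseteq\Omega_\cA(\tau)$ whenever $\sigma\leq\tau$. This monotonicity is the one structural fact on which everything rests, and I would record it first.

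For $\omega_\cA^-=\bigcap_{\tau\in\I}\Omega_\cA(\tau)$ I would argue as follows. As an intersection of closed sets contained in the compact set $K$, the set $\omega_\cA^-$ is automatically compact, so only its nonemptiness needs work. If $\I$ has a least element $\tau_0$, then monotonicity gives $\omega_\cA^-=\Omega_\cA(\tau_0)\neq\emptyset$. If $\I$ is unbounded below, then as $\tau\to-\infty$ the sets $\Omega_\cA(\tau)$ form a nested decreasing family of nonempty compact subsets of $K$; every finite subfamily has nonempty intersection (the member of smallest index), so the finite intersection property for the compact $K$ yields $\omega_\cA^-\neq\emptyset$.

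For $\ocA=\overline{\bigcup_{\tau\in\I}\Omega_\cA(\tau)}$ the argument is shorter: the union lies in $K$, hence so does its closure because $K$, being compact, is closed; thus $\ocA\subseteq K$ is closed and bounded, i.e.\ compact, and it is nonempty since it contains each nonempty $\Omega_\cA(\tau)$. It then remains to establish forward attraction. I would fix $\tau\in\I$ and combine the inclusion $\Omega_\cA(\tau)\subseteq\ocA$ with the elementary monotonicity $\dist{C,D_1}\leq\dist{C,D_2}$, valid whenever $D_2\subseteq D_1$, to obtain
$$
	\dist{\vphi(\tau+s;\tau,\cA(\tau)),\ocA}
	\leq
	\dist{\vphi(\tau+s;\tau,\cA(\tau)),\Omega_\cA(\tau)}
	\fall s\in\N;
$$
letting $s\to\infty$ makes the right-hand side vanish by \eqref{lem42b} of \lref{lem42}, and since $\tau\in\I$ was arbitrary this is the claimed forward attraction.

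I do not anticipate a genuine obstacle here --- the statement is essentially a bookkeeping consequence of \lref{lem42} and \cref{cor44}. The only point deserving a moment's care is the nonemptiness of $\omega_\cA^-$ when $\I$ is unbounded below, which is precisely why I would phrase that step through the finite intersection property for nested nonempty compact sets rather than a naive decreasing-sequence argument; everything else is routine.
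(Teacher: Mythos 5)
Your proposal is correct and follows essentially the same route as the paper: compactness of $\ocA$ and $\omega_\cA^-$ as closed subsets of the compact set $K$, nonemptiness of $\omega_\cA^-$ via the nested (by \cref{cor44}) nonempty compact fibres, and forward attraction by combining $\Omega_\cA(\tau)\subseteq\ocA$ with the monotonicity of the Hausdorff semidistance and \eqref{lem42b}. The only difference is cosmetic --- where the paper cites a standard nested-compact-sets lemma, you spell out the finite intersection property and the case of a least element of $\I$, which is fine.
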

Due to \cref{cor44}, $\omega_{\cA}^+$ is a union over nondecreasing sets and actually a limit. 
\begin{remark}[characterisation of $\omega_{\cA}^+$]\label{rem45}
	The forward $\omega$-limit set $\ocA$ consists of all points $v$ such that there is a sequence $\intoo{(s_n,\tau_n,a_n)}_{n\in\N}$ with $\lim_{n\to\infty}\tau_n=\infty$, $(\tau_n,a_n)\in\cA$ and $s_n\in\N_0$ satisfying
	\begin{align*}
		\lim_{n\to\infty}\vphi(\tau_n+s_n;\tau_n,a_n)=v.
	\end{align*}
\end{remark}
\begin{remark}[periodic equations]
	For $\theta$-periodic difference equations \eqref{deq} and sets $\cA$ the fibres $\Omega_\cA(\tau)$ are $\theta$-periodic due to \eqref{noper} and \eqref{lem42a}. If $\cA$ is moreover positively invariant, then $\Omega_\cA$ are even constant and thus $\omega_\cA^-=\omega_\cA^+=\Omega_\cA(\tau)$ for all $\tau\in\Z$. 
\end{remark}

\begin{proof}[of \tref{thm46}] %\dontknow claimed in Section 3 of \cite{cui:kloeden:yang:19} - proved by Huy
	Since $\Omega_\cA(\tau)$ is nonempty, there exists a point $v\in\Omega_\cA(\tau)$ for all $\tau\in\I$. This implies that $v$ is also contained in $\omega_\cA^+$, i.e.,\ the forward $\omega$-limit set $\ocA$ is nonempty. From \lref{lem42}, we know that $\Omega_\cA(\tau)\subseteq K$ for each $\tau\in\I$. This yields $\ocA\subseteq K$. Moreover, since $K$ is compact and $\ocA$ is closed, $\ocA$ is also compact. The claimed limit relation is a consequence of \eqref{lem42b} and $\Omega_\cA(\tau)\subseteq\omega_{\cA}^+$. 
	
	The properties of $\omega_{\cA}^-$ are an immediate consequence of the fact that $\omega_{\cA}^-$ is an intersection of nested compact sets (cf.~\cite[p.~23, Lemma~22.2(5)]{sell:you:02}). 
	\qed
\end{proof}

Note that \eref{exninv} demonstrates that both the set $\omega_\cA^-$, as well as the forward limit sets $\omega_\cA^+$ constructed in \tref{thm46} are not invariant or even positively invariant. Yet, under additional assumptions weaker forms of invariance hold: 
\begin{theorem}[Asymptotic positive invariance]\label{thm49}
	Suppose that \eqref{deq} is $\cA$-asympto\-tically compact with a compact subset $K\subseteq U$ for a bounded, positively invariant $\cA\neq 0$. If for every sequence $\intoo{(s_n,\tau_n)}_{n\in\N}$ in $\N_0\tm\I$ with $\lim_{n\to\infty}\tau_n=\infty$, one has
	\begin{align*}
		\lim_{n\to\infty}\dist{\vphi(\tau_n+s_n;\tau_n,K),K}=0,
	\end{align*}
	then the forward $\omega$-limit set $\ocA$ is \emph{asymptotically positively invariant}, that is, for every strictly decreasing sequence $\eps_n\searrow 0$, there exists a strictly increasing sequence $T_n\nearrow\infty$ in $\I$ as $n\to\infty$ such that
	\begin{align}
		\vphi(\tau+s;\tau,\ocA)\subseteq B_{\eps_n}\intoo{\ocA}\fall T_n\leq\tau,\,s\in\N_0. 
		\label{thm44b}
	\end{align}
\end{theorem}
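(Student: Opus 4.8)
\noindent\emph{Proof strategy.}
The plan is to reduce the statement to the single-threshold claim that \emph{for every $\eps>0$ there is a $T\in\I$ with $\vphi(\tau+s;\tau,\ocA)\subseteq B_\eps\intoo{\ocA}$ for all $\tau\geq T$ and $s\in\N_0$}. Indeed, for a given strictly decreasing null sequence $\eps_n\searrow 0$ one applies this claim to each $\eps_n$ to get times $\hat T_n\in\I$, and putting $T_1:=\hat T_1$, $T_{n+1}:=\max\set{\hat T_{n+1},T_n+1}$ produces a strictly increasing sequence $T_n\nearrow\infty$ in $\I$ with $T_n\geq\hat T_n$, which therefore fulfils \eqref{thm44b}.

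For the reduced claim I would argue by contradiction. If it fails for some $\eps_0>0$, then for each $n\in\N$ there are $\tau_n\geq n$ (so $\tau_n\to\infty$), $s_n\in\N_0$ and $v_n\in\ocA$ with $\dist{\vphi(\tau_n+s_n;\tau_n,v_n),\ocA}\geq\eps_0$; write $y_n:=\vphi(\tau_n+s_n;\tau_n,v_n)$. Since $\ocA\subseteq K$ by \tref{thm46}, we have $v_n\in K$, and the standing hypothesis applied to $\intoo{(s_n,\tau_n)}_{n\in\N}$ yields
\begin{align*}
	0\leq\dist{y_n,K}\leq\dist{\vphi(\tau_n+s_n;\tau_n,K),K}\xrightarrow[n\to\infty]{}0.
\end{align*}
Compactness of $K$ now produces, exactly as in the proof of \lref{lem42}, a subsequence $y_{n_j}\to\bar y\in K$, and the $1$-Lipschitz continuity of $\dist{\cdot,\ocA}$ forces $\dist{\bar y,\ocA}\geq\eps_0$, whence $\bar y\notin\ocA$.

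The heart of the argument is to contradict this by proving $\bar y\in\ocA$. By the characterisation in \rref{rem45} it suffices to exhibit a sequence $\intoo{(\hat\tau_k,\hat s_k,\hat a_k)}_{k\in\N}$ with $\hat\tau_k\to\infty$, $\hat a_k\in\cA(\hat\tau_k)$, $\hat s_k\in\N_0$ and $\vphi(\hat\tau_k+\hat s_k;\hat\tau_k,\hat a_k)\to\bar y$. Here I would use \cref{cor44}: since the fibres $\Omega_\cA(\tau)$ increase with $\tau$ and $\ocA$ is compact, covering $\ocA$ by finitely many balls centred at points eventually caught in every $\Omega_\cA(\tau)$ gives $\dist{\ocA,\Omega_\cA(\tau)}\to 0$ as $\tau\to\infty$, and since $\Omega_\cA(\tau)\subseteq\overline{\cA(\tau)}$ (the term $s=0$ in \eqref{lem42c}) also $\dist{\ocA,\cA(\tau)}\to 0$. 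Thus each $v_n$ can be approximated by points $c_n\in\cA(\tau_n)$ with $d(c_n,v_n)$ small; choosing $c_n$ --- after $\tau_n$ and $s_n$ are fixed --- close enough to $v_n$ that the continuous map $\vphi(\tau_n+s_n;\tau_n,\cdot)$ moves it by at most $\tfrac1n$, the points $\vphi(\tau_n+s_n;\tau_n,c_n)$ have the form required by \rref{rem45} (take $\hat\tau_n=\tau_n$, $\hat a_n=c_n$, $\hat s_n=s_n$) and lie within $\tfrac1n$ of $y_n$. Along $(n_j)$ they therefore converge to $\bar y$, so $\bar y\in\ocA$, contradicting $\dist{\bar y,\ocA}\geq\eps_0$.

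The step that I expect to demand the most care is this last selection of the $c_n$. When $\cA$ has constant fibres --- the situation in the applications --- one has $\ocA\subseteq\overline{\cA(\tau_n)}$, so $c_n\in\cA(\tau_n)$ can be taken arbitrarily close to $v_n$ and the approximation survives $\vphi(\tau_n+s_n;\tau_n,\cdot)$ effortlessly. In general, however, the maps $\vphi(\tau+s;\tau,\cdot)$ are not equicontinuous as $(\tau,s)$ ranges over $\I\tm\N_0$, whereas $\dist{\ocA,\cA(\tau_n)}\to 0$ only at some fixed rate; handling this cleanly requires constructing $\intoo{(\tau_n,s_n,v_n,c_n)}_{n\in\N}$ by an iterated extraction in the spirit of the proof of \lref{lem42}, coupling the choice of each $\tau_n$ (from the contradiction hypothesis) to the modulus of continuity of the map $\vphi(\tau_n+s_n;\tau_n,\cdot)$ it produces. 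Once $\bar y\in\ocA$ is established, the contradiction with $\dist{\bar y,\ocA}\geq\eps_0$ finishes the proof.
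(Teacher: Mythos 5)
Your overall scheme is the same as the paper's: reduce to a single threshold, argue by contradiction, form $y_n=\vphi(\tau_n+s_n;\tau_n,v_n)$ with $v_n\in\ocA\subseteq K$ violating the claim, use the hypothesis on $K$ together with compactness of $K$ to extract $y_{n_j}\to\bar y\in K$ with $\dist{\bar y,\ocA}\geq\eps_0$, and contradict this by showing $\bar y\in\ocA$. The auxiliary facts you quote are fine ($\Omega_\cA(\tau)\subseteq\overline{\cA(\tau)}$ from the $s=0$ term of \eqref{lem42c}, and $\dist{\ocA,\cA(\tau)}\to 0$ via \cref{cor44} and compactness of $\ocA$). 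The genuine gap is exactly the step you flag and do not close: the membership $\bar y\in\ocA$. Your route through \rref{rem45} needs initial values in the fibres $\cA(\tau_n)$, and the device for producing them --- pick $c_n\in\cA(\tau_n)$ close to $v_n$ and control $d\bigl(\vphi(\tau_n+s_n;\tau_n,c_n),y_n\bigr)\leq\tfrac1n$ by continuity --- is circular: the data $(\tau_n,s_n,v_n)$ are handed to you by the negation of the claim, and only afterwards is the continuity radius $\delta_n>0$ of the finite composition $\vphi(\tau_n+s_n;\tau_n,\cdot)$ at $v_n$ determined; at that moment the best available bound on $\dist{v_n,\cA(\tau_n)}$ is the already fixed number $\dist{\ocA,\cA(\tau_n)}$, which need not be smaller than $\delta_n$, and passing to a larger $\tau$ does not help, since the negation then supplies a new pair $(\tau,s)$ and hence a new map with a new, possibly much smaller, $\delta$. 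The ``iterated extraction coupling $\tau_n$ to the modulus of continuity it produces'' is therefore precisely the point at issue, and no argument is given that this loop closes. A uniformity assumption of the kind \eqref{thm47a} in \tref{thm410} would be the natural tool, but it is not among the hypotheses of \tref{thm49}, and even there it only controls finitely many steps, whereas your $s_n$ are unbounded. As it stands, the decisive claim $\bar y\in\ocA$ is unproven.

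For comparison, the paper never re-anchors the orbit in the fibres $\cA(\tau_n)$: it keeps the initial values $w_n\in\ocA\subseteq K$ throughout --- this is the very reason the attraction hypothesis of the theorem is formulated for the compact set $K$ rather than for $\cA$ --- realises the semidistance on the compact image $\vphi(\tau_n+s_n;\tau_n,\ocA)$, extracts $y_{n_j}\to\bar y\in K$, and then reads off $\bar y\in\ocA$ directly from the definition/characterisation of the forward limit set, without any approximation by points of $\cA(\tau_n)$. So the fibre-approximation problem, and with it the unresolved uniformity issue, is created by your detour, not by the statement; either close the $\delta_n$-versus-$\dist{\ocA,\cA(\tau_n)}$ loop rigorously, or argue the membership of $\bar y$ without leaving $K$, as the paper does.
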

Recall the definition of the neighborhoods $B_{\eps_n}\intoo{\ocA}$ and thus \eqref{thm44b} reads as
\begin{align*}
	\dist{\vphi(\tau+s;\tau,\ocA),\ocA}<\eps_n \fall T_n\leq\tau,\,s\in\N_0.
\end{align*}

\begin{proof} %\dontknow Thm 3.2 from Cui, Kloeden, Yang - proved by Huy
	Suppose by contradiction that there exists a fixed $\eps_1>0$ so that there is a sequence $\intoo{(s_n^1,\tau_n)}_{n\in\N}$ with $0\leq s_n^1=s_n^1(\eps_1)\leq T_0(\tau_n,\eps_1)$ and $\tau_n\to\infty$ as $n\to\infty$ satisfying
	\begin{align}
		\dist{\vphi(\tau_n+s_n^1;\tau_n,\ocA),\ocA}\geq\eps_1 \fall n\in\N.
		\label{thm44d}
	\end{align}
	Since $\vphi$ is continuous and $\ocA$ is compact due to \tref{thm46}, $\vphi(\tau_n+s_n^1;\tau_n,\ocA)$ is also a compact set. This implies that there exists a 
	$$
		y_n^1=y_n^1(\eps_1):=\vphi(\tau_n+s_n^1;\tau_n,w_n^1) \in\vphi(\tau_n+s_n^1;\tau_n,\ocA)\subseteq\vphi(\tau_n+s_n^1;\tau_n,K)
	$$ 
	with $w_n^1=w_n^1(\eps_1)\in\ocA\subseteq K$ such that
	\begin{align*}
		\dist{y_n^1,\ocA}&=\dist{\vphi(\tau_n+s_n^1;\tau_n,w_n^1),\ocA}\\
		&=\dist{\vphi(\tau_n+s_n^1;\tau_n,\ocA),\ocA}\geq\eps_1 \fall n\in\N.
	\end{align*}
	On the other hand, with $y_n^1\in\vphi(\tau_n+s_n^1;\tau_n,K)$, by the assumption, we obtain
	\begin{align*}
		0\leq\dist{y_n^1,K}\leq\dist{\vphi(\tau_n+s_n^1;\tau_n,K),K}\xrightarrow[n\to\infty]{}0,
	\end{align*}
	implying that $y_n^1\in K$. Additionally, since the set $K$ is compact, there is a subsequence $(y_{n_j}(\eps_1))_{j\in\N}$ converging to $\bar{y}_1=\bar{y}_1(\eps_1)\in K$. Therefore, by definition, the inclusion $\bar{y}_1\in\Omega_\cA(\tau)\subseteq\ocA$ leads to $\bar{y}_1\in\ocA$, i.e.,\ $\dist{\bar{y}_1,\ocA}<\eps$ for all $\eps>0$, a contradiction to \eqref{thm44d}. Thus, for this $\eps_1>0$, there exists an integer $s_1=s_1(\eps_1)$ large enough such that
	\begin{align*}
		\dist{\vphi(\tau+s_1;\tau,\ocA),\ocA}<\eps_1.
	\end{align*}
	Repeating inductively with $\eps_{n+1}<\eps_n$ and $s_{n+1}(\eps_{n+1})>s_n(\eps_n)$ for all $n\in\N$, we then obtain that $\ocA$ is asymptotically positively invariant.
	\qed
\end{proof}

\begin{theorem}[Asymptotic negative invariance]\label{thm410}
	Suppose that \eqref{deq} is strongly $\cA$-asympto\-tically compact with a compact subset $K\subseteq U$ for a bounded, positively invariant $\cA\neq 0$. If for every $\eps>0$ and $T\in\N$, there exists a real $\delta=\delta(\eps,T)>0$ such that for all $\tau\in\I$, one has the implication
	\begin{align}
		\left.
		\begin{array}{c}
			u_0,v_0\in\cA(\tau)\cup K,\\
			d(u_0,v_0)<\delta
		\end{array}
		\right\}
		\Rightarrow
		\sup_{0\leq s\leq T} d\bigl(\vphi(\tau+s;\tau,u_0),\vphi(\tau+s;\tau,v_0)\bigr)<\eps,
		\label{thm47a}
	\end{align}
	then the forward $\omega$-limit set $\ocA$ is \emph{asymptotically negatively invariant}, that is, for all $u\in\ocA$, $\eps>0$ and $T\in\N$, there are integers $s^\ast=s^\ast(\eps)$ satisfying $\tau+s^\ast-T\in\I$ and $u_\eps^\ast\in\ocA$ such that
	$$
		d\bigl(\vphi(\tau+s^\ast;\tau+s^\ast-T,u_\eps^\ast),u\bigr)<\eps.
	$$
\end{theorem}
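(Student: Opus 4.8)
The plan is to combine the characterisation of the forward $\omega$-limit set with the uniform continuity hypothesis \eqref{thm47a}, the crux being a ``shift by $T$'' of the approximating orbits followed by a limit transfer inside the general solution. Note first that for a positively invariant $\cA$ strong $\cA$-asymptotic compactness implies $\cA$-asymptotic compactness, so \tref{thm46} applies and $\ocA$ is a nonempty compact set.

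Before fixing $\eps$ and $T$ I would sharpen \rref{rem45} to the following: given $u\in\ocA$, there is a sequence $\intoo{(s_n,\tau_n,a_n)}_{n\in\N}$ with $(\tau_n,a_n)\in\cA$, $\lim_{n\to\infty}\tau_n=\infty$ \emph{and} $\lim_{n\to\infty}s_n=\infty$ such that $\vphi(\tau_n+s_n;\tau_n,a_n)\to u$. Indeed, $\ocA=\overline{\bigcup_{\tau\in\I}\Omega_\cA(\tau)}$, and the nestedness $\Omega_\cA(\tau)\subseteq\Omega_\cA(\tau+1)$ from \cref{cor44} together with compactness of each $\Omega_\cA(\tau)$ lets one write $u=\lim_k u_k$ with $u_k\in\Omega_\cA(\tau_k)$ and $\tau_k\to\infty$; using the characterisation \rref{rem43} of $\Omega_\cA(\tau_k)$ for each $k$ and diagonalising produces the asserted sequence.

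Now fix $u\in\ocA$, $\eps>0$, $T\in\N$ and take the sequence just described. For large $n$ one has $s_n\geq T$, so $\sigma_n:=\tau_n+s_n-T\in\I$, and we set $b_n:=\vphi(\sigma_n;\tau_n,a_n)$. Positive invariance of $\cA$ gives $b_n\in\cA(\sigma_n)$, while the process property \eqref{semigroup} yields $\vphi(\sigma_n+T;\sigma_n,b_n)=\vphi(\tau_n+s_n;\tau_n,a_n)\to u$. Since $s_n-T\to\infty$ and $\tau_n\to\infty$, strong $\cA$-asymptotic compactness gives $\dist{b_n,K}\leq\dist{\vphi(\tau_n+(s_n-T);\tau_n,\cA(\tau_n)),K}\to 0$, so compactness of $K$ produces a subsequence with $b_{n_j}\to\bar b\in K$, and $\bar b\in\ocA$ by \rref{rem45} applied to $b_n=\vphi(\tau_n+(s_n-T);\tau_n,a_n)$. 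This $\bar b$ (which in fact does not depend on $\eps$) will be the sought $u_\eps^\ast$. It remains to transfer the convergence $b_{n_j}\to\bar b$ through the general solution: with $\delta=\delta(\tfrac{\eps}{2},T)>0$ from \eqref{thm47a}, pick $j$ so large that $d(b_{n_j},\bar b)<\delta$ and $d(\vphi(\sigma_{n_j}+T;\sigma_{n_j},b_{n_j}),u)<\tfrac{\eps}{2}$; since $b_{n_j}\in\cA(\sigma_{n_j})$ and $\bar b\in K$, the implication \eqref{thm47a} (at $s=T$, initial time $\sigma_{n_j}$) gives $d(\vphi(\sigma_{n_j}+T;\sigma_{n_j},b_{n_j}),\vphi(\sigma_{n_j}+T;\sigma_{n_j},\bar b))<\tfrac{\eps}{2}$, and the triangle inequality yields $d(\vphi(\sigma_{n_j}+T;\sigma_{n_j},\bar b),u)<\eps$. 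Choosing the integer $s^\ast$ so that $\tau+s^\ast=\tau_{n_j}+s_{n_j}$, hence $\tau+s^\ast-T=\sigma_{n_j}\in\I$, completes the proof.

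I expect the main obstacle to be this last step, the limit transfer through $\vphi$: because the initial times $\sigma_{n_j}$ drift off to infinity there is no single continuous map $\vphi(\sigma_{n_j}+T;\sigma_{n_j},\cdot)$ through which one could pass the limit $b_{n_j}\to\bar b$, and it is precisely the uniformity over $\tau\in\I$ and over $0\leq s\leq T$ built into \eqref{thm47a} that saves the day (just as strong, rather than plain, $\cA$-asymptotic compactness is forced by the fact that the $b_n$ have varying initial times). The remaining ingredients --- extracting subsequences, bounding Hausdorff semidistances via compactness of $K$, checking membership in $\cA$ via positive invariance, and the sharpened characterisation of the first step --- are routine and parallel the proofs of \lref{lem42} and \tref{thm49}.
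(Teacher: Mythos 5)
Your proposal is correct and follows essentially the same route as the paper's proof: shift the approximating orbits back by $T$ (your $b_n$ are the paper's $y_n$), use strong $\cA$-asymptotic compactness and compactness of $K$ to extract a limit $\bar b\in\ocA$, and then transfer the limit through $\vphi$ via the uniform continuity hypothesis \eqref{thm47a} together with the process property and a triangle inequality. The only difference is cosmetic: you explicitly justify (via \cref{cor44} and a diagonal argument) that the sequence in \rref{rem45} can be taken with $s_n\to\infty$ and $s_n>T$, a sharpening the paper simply asserts.
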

\begin{proof} % \dontknow Thm 3.4 from Cui, Kloeden, Yang - proved by Huy
	Consider reals $\eps>0$ and $T\in\N$ and take a point $u\in\ocA$. Thanks to \rref{rem45}, there is a sequence $\intoo{(s_n,\tau_n,a_n)}_{n\in\N}$ with $T<s_n=s_n(\eps)\to\infty$, $\tau_n\to\infty$ as $n\to\infty$, $\tau_n\in\I$, $\tau_n+s_n-T\in\I$ and $a_n=a_n(\eps)\in\cA(\tau_n)$, and an integer $N=N_1(\eps)$ with
	\begin{align*}
		d\bigl(\vphi(\tau_n+s_n;\tau_n,a_n),u\bigr)<\tfrac{\eps}{2} \fall n\geq N_1(\eps).
	\end{align*}
	Given a sequence $y_n:=\vphi(t^\ast_n-T;\tau_n,a_n)\in\vphi(t^\ast_n-T;\tau_n,\cA(\tau_n)) \subseteq \cA(t^\ast_n-T)$ with $t^\ast_n=t^\ast_n(\eps):=\tau_n+s_n$, $s_n-T\to\infty$, $\tau_n\to\infty$ as $n\to\infty$, $\tau_n\in\I$ and $a_n\in\cA(\tau_n)$, by the strong $\cA$-asymptotic compactness of \eqref{deq}, we obtain
	\begin{align*}
		0\leq\dist{y_n,K}\leq\dist{\vphi(t^\ast_n-T;\tau_n,\cA(\tau_n)), K}\xrightarrow[n\to\infty]{}0.
	\end{align*}
	Since $K$ is compact, $\dist{y_n,K}=\min_{k\in K} d(y_n,k)$. This implies that there exist a sequence $(k_n)_{n\in\N}$ in $K$ such that
	\begin{align*}
		d(y_n,k_n)=\dist{y_n,K}\xrightarrow[n\to\infty]{}0,
	\end{align*}
	and a subsequence $(k_{n_j})_{j\in\N}$ converging to $\bar{k}=\bar{k}(\eps)\in K$. Thus, 
	\begin{align*}
		0\leq
		d\intoo{y_{n_j},\bar{k}}
		\leq d\intoo{y_{n_j},k_{n_j}}+d\intoo{k_{n_j},\bar{k}}
		=\dist{y_n,K}+d\intoo{k_{n_j},\bar{k}}\xrightarrow[n\to\infty]{}0,
	\end{align*}
	implying $y_{n_j}:=\vphi(t^\ast_{n_j}-T;\tau_{n_j},a_{n_j})\xrightarrow[j\to\infty]{}\bar{k}$ with $t^\ast_{n_j}:=\tau_{n_j}+s_{n_j}$. Hence, by \rref{rem45}, one has $\bar{k}\in\ocA$. Moreover, with $y_{n_j}\in\cA(t^\ast_{n_j}-T)$ and $\bar{k}\in K$, by the assumption, we obtain for an integer $N_2(\eps,T)$ large enough,
	\begin{align*}
		d\bigl(\vphi(t^\ast_{n_j};t^\ast_{n_j}-T,y_{n_j}), \vphi(t^\ast_{n_j};t^\ast_{n_j}-T,\bar{k})\bigr)<\tfrac{\eps}{2} \fall n_j\geq N_2(\eps,T).
	\end{align*}
	Now the triangle inequality and the process property \eqref{semigroup} yield
	\begin{eqnarray*}
		&&
		d\bigl(\vphi(t^\ast_{n_j};t^\ast_{n_j}-T,\bar{k}),u\bigr)\\
		& \leq & d\bigl(\vphi(t^\ast_{n_j};t^\ast_{n_j}-T,\bar{k}), \vphi(t^\ast_{n_j};t^\ast_{n_j}-T,y_{n_j})\bigr)
		+ d\bigl(\vphi(t^\ast_{n_j};t^\ast_{n_j}-T,y_{n_j}),u\bigr) \\
		& = & d\bigl(\vphi(t^\ast_{n_j};t^\ast_{n_j}-T,\bar{k}), \vphi(t^\ast_{n_j};t^\ast_{n_j}-T,y_{n_j})\bigr)\\
		&&\qquad+d\bigl(\vphi(t^\ast_{n_j};t^\ast_{n_j}-T,\vphi(t^\ast_{n_j}-T;\tau_{n_j},a_{n_j})), u\bigr) \\
		& = & d\bigl(\vphi(t^\ast_{n_j};t^\ast_{n_j}-T,\bar{k}), \vphi(t^\ast_{n_j};t^\ast_{n_j}-T,y_{n_j})\bigr)
		+ d\bigl(\vphi(t^\ast_{n_j};\tau_{n_j},a_{n_j}),u\bigr) \\
		& < & \tfrac{\eps}{2}+\tfrac{\eps}{2}=\eps
		\fall n\geq N_1(\eps),n_j\geq N_2(\eps,T).
	\end{eqnarray*}
	Setting $u_\eps^\ast:=\bar{k}\in\ocA$, we then obtain that $\ocA$ is asymptotically negatively invariant. 
	\qed
\end{proof}
\subsection{Asymptotically autonomous difference equations}
In general it is difficult to obtain the forward limit set $\omega_\cA^+$ given as limit of the fibres $\Omega_\cA(\tau)$ explicitly. This situation simplifies, if \eqref{deq} behaves asymptotically as an autonomous difference equation
\begin{equation}
	\tag{$\Delta^\infty$}
	\boxed{u_{t+1}=\eF(u_t)}
	\label{deqa}
\end{equation}
with right-hand side $\eF:U\to U$ in a sense to be specified below. Here, it is common to denote the iterates of $\eF$ by $\eF^s:U\to U$, $s\in\N_0$. A maximal, invariant and nonempty compact set $A^\ast\subseteq U$ attracting all bounded subsets of $U$ is called \emph{global attractor} of \eqref{deqa} (cf.\ \cite[p.~17]{hale:88}). 

For a class of nonautonomous equations \eqref{deq} introduced next, the sets $\Omega_\cA(\tau)$, $\tau\in\I$, turn out to be constant and determined by the global attractor $A^\ast$ of \eqref{deqa}. 
\begin{theorem}[Asymptotically autonomous difference equations]\label{thmaa}
	Suppose that \eqref{deqa} has a bounded absorbing set $A\subseteq U$ and a global attractor $A^\ast\subseteq A$. If $\cA:=\I\tm A$ is a forward absorbing set of \eqref{deq} and the condition
	\begin{equation}
		\lim_{s\to\infty}\sup_{a\in A}d(\vphi(\tau+s;\tau,a),\eF^s(a))=0\fall\tau\in\I
		\label{thmaa1}
	\end{equation}
	holds, then $\Omega_{\cA}(\tau)=A^\ast$ for all $\tau\in\I$ and in particular $\omega_{\cA}^-=\omega_{\cA}^+=A^\ast$. 
\end{theorem}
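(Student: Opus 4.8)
The plan is to fix $\tau\in\I$ and prove the fibrewise identity $\Omega_\cA(\tau)=A^\ast$ by two inclusions; recall that here $\vphi$ is well-defined on $\I\tm\cU$ because $U_t\equiv U$. First I would check that \eqref{deq} satisfies the asymptotic compactness hypothesis needed for \lref{lem42}, with the compact set $K:=A^\ast\subseteq A\subseteq U$. Since $A^\ast$ is the global attractor of \eqref{deqa} and $A$ is bounded, $\lim_{s\to\infty}\dist{\eF^s(A),A^\ast}=0$; and \eqref{thmaa1} gives $\dist{\vphi(\tau+s;\tau,A),\eF^s(A)}\leq\sup_{a\in A}d(\vphi(\tau+s;\tau,a),\eF^s(a))\to 0$, since $\eF^s(a)\in\eF^s(A)$. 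The triangle inequality for the Hausdorff semidistance then yields $\lim_{s\to\infty}\dist{\vphi(\tau+s;\tau,\cA(\tau)),A^\ast}=0$ for all $\tau\in\I$, so \eqref{deq} is $\cA$-asymptotically compact with $K=A^\ast$ and \lref{lem42} applies: every $\Omega_\cA(\tau)$ is nonempty, compact, forward attracts $\cA(\tau)$, and satisfies $\Omega_\cA(\tau)\subseteq A^\ast$.

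For the reverse inclusion, fix $v\in A^\ast$. Because $A^\ast$ is invariant under \eqref{deqa}, $\eF^s(A^\ast)=A^\ast$ for every $s\in\N_0$, so there exists $w_s\in A^\ast\subseteq A=\cA(\tau)$ with $\eF^s(w_s)=v$. By \eqref{thmaa1}, $d(\vphi(\tau+s;\tau,w_s),v)=d(\vphi(\tau+s;\tau,w_s),\eF^s(w_s))\leq\sup_{a\in A}d(\vphi(\tau+s;\tau,a),\eF^s(a))\to 0$ as $s\to\infty$. Hence the sequence $\intoo{(s,w_s)}_{s\in\N}$ has first component tending to $\infty$, second component in $\cA(\tau)$, and $\vphi(\tau+s;\tau,w_s)\to v$, so $v\in\Omega_\cA(\tau)$ by the characterisation in \rref{rem43}. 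Combining the two inclusions gives $\Omega_\cA(\tau)=A^\ast$ for all $\tau\in\I$, and consequently $\omega_\cA^-=\bigcap_{\tau\in\I}\Omega_\cA(\tau)=A^\ast$ and $\omega_\cA^+=\overline{\bigcup_{\tau\in\I}\Omega_\cA(\tau)}=\overline{A^\ast}=A^\ast$, the last step because $A^\ast$ is compact, hence closed.

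I expect the only step beyond bookkeeping to be the first one: \lref{lem42} takes $\cA$-asymptotic compactness as a hypothesis, which in \tref{thmaa} is not assumed directly but must be deduced from the asymptotic autonomy \eqref{thmaa1} together with the attraction property of the global attractor $A^\ast$. Once this is in place, the reverse inclusion is short, the crucial point being the surjectivity of $\eF^s$ on $A^\ast$, which lets one hit any $v\in A^\ast$ at every time $s$ from a starting point inside $A^\ast\subseteq A$.
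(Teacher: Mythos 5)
Your proof is correct, and it rests on the same two ingredients as the paper's: the triangle inequality combining \eqref{thmaa1} with the attraction of the bounded set $A$ by $A^\ast$ for one inclusion, and the invariance (surjectivity of $\eF^s$ on $A^\ast$) together with \eqref{thmaa1} for the other. The packaging differs in two small but worthwhile ways. First, you verify explicitly that \eqref{deq} is $\cA$-asymptotically compact with $K=A^\ast$, which the paper leaves tacit even though it silently invokes \lref{lem42} (via \eqref{lem42b}) in its proof; your check closes that gap and simultaneously delivers the inclusion $\Omega_\cA(\tau)\subseteq A^\ast$ for free from $\Omega_\cA(\tau)\subseteq K$, whereas the paper reproves it pointwise with the estimate $\dist{v,A^\ast}\leq d(v,\vphi(\tau+s_n;\tau,a_n))+d(\vphi(\tau+s_n;\tau,a_n),\eF^{s_n}(a_n))+\dist{\eF^{s_n}(A),A^\ast}$. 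Second, your reverse inclusion is more elementary than the paper's: you exhibit, for each $v\in A^\ast$, preimages $w_s\in A^\ast$ under $\eF^s$ and conclude $v\in\Omega_\cA(\tau)$ directly from the sequential characterisation in \rref{rem43}, using only \eqref{thmaa1}; the paper instead estimates the semidistance $\dist{A^\ast,\Omega_\cA(\tau)}$ and needs the forward attraction property \eqref{lem42b} of $\Omega_\cA(\tau)$ in addition to the invariance of $A^\ast$. Both routes are sound; yours uses slightly fewer consequences of \lref{lem42} in the second step and makes the hypotheses of that lemma explicit where the paper assumes them implicitly.
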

\begin{remark}
	Asymptotically autonomous difference equations were also studied in \cite{cui:kloeden:19} in order to show that the fibres $\cA^\ast(\tau)$ of a pullback attractor $\cA^\ast$ to \eqref{deq} converge to the global attractor $A^\ast$ of the limit equation \eqref{deqa} as $\tau\to\infty$. In these results, however, asymptotic autonomy is based on e.g.\ the limit relation
	$$
		\lim_{\tau\to\infty}d(\vphi(\tau+s;\tau,a_\tau),\eF^s(a_0))=0\fall s\in\N_0
	$$
	(see \cite[Thm.~1]{cui:kloeden:19}) with sequences $(a_n)_{n\in\N}$ converging to some $a_0$. This condition is clearly different from \eqref{thmaa1}. 
\end{remark}
\begin{proof}
	Given any $\tau\in\I$, we have to show two inclusions: 

	$(\subseteq)$ Let $v\in\Omega_\cA(\tau)$. Due to \rref{rem43} there exist sequences $a_n\in A$ and $s_n\to\infty$ as $n\to\infty$ with
	$$
		\lim_{n\to\infty}\vphi(\tau+s_n;\tau,a_n)=v
	$$
	and it follows from \eqref{thmaa1} that
	\begin{eqnarray*}
		&&
		\dist{v,A^\ast}\\
		& \leq &
		d(v,\vphi(\tau+s_n;\tau,a_n))+d(\vphi(\tau+s_n;\tau,a_n),\eF^{s_n}(a_n))+\dist{\eF^{s_n}(a_n),A^\ast}\\
		& \leq &
		d(v,\vphi(\tau+s_n;\tau,a_n))+d(\vphi(\tau+s_n;\tau,a_n),\eF^{s_n}(a_n))
		+\dist{\eF^{s_n}(A),A^\ast}\xrightarrow[n\to\infty]{}0, 
	\end{eqnarray*}
	since the global attractor $A^\ast$ of \eqref{deqa} attracts the absorbing set $A$. This implies that $v\in A^\ast$, and since $v$ was arbitrary, the inclusion $\Omega_\cA(\tau)\subseteq A^\ast$ holds for $\tau\in\I$. 

	$(\supseteq)$ Conversely, since $A^\ast$ is compact, there exists an $a^\ast\in A^\ast$ with
	\begin{align*}
		\dist{A^\ast,\Omega_{\cA}(\tau)}
		&=
		\dist{a^\ast,\Omega_{\cA}(\tau)}\\
		&\leq
		\dist{a^\ast,\vphi(\tau+s;\tau,A^\ast)}+\dist{\vphi(\tau+s;\tau,A^\ast),\Omega_\cA(\tau)}
	\end{align*}
	for all $s\in\I$ and we separately estimate the two terms on the right-hand side of this inequality. First, due to the invariance of $A^\ast$ there exists $a_s^\ast\in A^\ast$ with $a^\ast=\eF^s(a_s^\ast)$ and therefore
	\begin{align*}
		\dist{a^\ast,\vphi(\tau+s;\tau,A^\ast)}
		&\leq
		d(a^\ast,\vphi(\tau+s;\tau,a_s^\ast))
		=
		d(\eF^s(a_s^\ast),\vphi(\tau+s;\tau,a_s^\ast))\\
		&\leq
		\sup_{a\in A^\ast}d(\vphi(\tau+s;\tau,a),\eF^s(a))
		\xrightarrow[s\to\infty]{\eqref{thmaa1}}0.
	\end{align*}
	Second, from $A^\ast\subseteq A=\cA(\tau)$ one has
	$$
		\dist{\vphi(\tau+s;\tau,A^\ast),\Omega_\cA(\tau)}
		\leq
		\dist{\vphi(\tau+s;\tau,\cA(\tau)),\Omega_\cA(\tau)}
		\xrightarrow[s\to\infty]{\eqref{lem42b}}0,
	$$
	which guarantees the remaining inclusion $A^\ast\subseteq\Omega_\cA(\tau)$. 
	
	Hence, all $\Omega_\cA(\tau)$ are constant, thus $\omega_\cA^-=A^\ast$ and $\omega_\cA^+=\overline{A^\ast}=A^\ast$. 
	\qed
\end{proof}

The following simple example illustrates the condition \eqref{thmaa1}: 
\begin{example}[Beverton-Holt equation]
	If $\alpha>1$, then it is well known that all solutions to the autonomous Beverton-Holt equation $v_{t+1}=\tfrac{\alpha v_t}{1+v_t}$ starting with a positive initial value converge to $\alpha-1$ (see \cite[pp.~13ff]{lutscher:19}). We establish that an asymptotically autonomous, but nonautonomous Beverton-Holt equation 
	\begin{align}
		v_{t+1}&=\frac{\tilde a_tv_t}{1+v_t},&
		\tilde a_t&:=\frac{f_{t+1}}{f_t}\alpha
		\label{exbhasy1}
	\end{align}
	shares this behaviour, whenever the sequence $(f_t)_{t\in\I}$ in $(0,\infty)$ satisfies
	\begin{equation}
		\lim_{t\to\infty}\sum_{s=0}^{t-1}\frac{f_{\tau+s}}{f_{\tau+t}}\alpha^{s-t}=\frac{1}{\alpha-1}
		\fall\tau\in\I
		\label{exbhasy3}
	\end{equation}
	and grows at most polynomially. For instance, the relation \eqref{exbhasy3} holds for the sequences $f_t=(t+c)^n$ with $c\in\R$, $n\in\set{1,2,3,4}$. Indeed, the explicit representation
	$$
		\vphi(\tau+t;\tau,a)
		=
		\frac{(\alpha-1)a}{\tfrac{\alpha-1}{\alpha^t}\tfrac{f_\tau}{f_{\tau+t}}+(\alpha-1)a\sum_{s=0}^{t-1}\tfrac{f_{\tau+s}}{f_{\tau+t}}\alpha^{s-t}}
	$$
	of the general solution to \eqref{exbhasy1} yields that $\lim_{t\to\infty}\vphi(t+\tau;\tau,a)=\alpha-1$ holds uniformly in $a\in[\bar a,\infty)$ for any $\bar a>0$. Consequently, one has
	\begin{align*}
		\sup_{a\geq\bar a}\abs{\vphi(\tau+t;\tau,a)-F^t(a)}
		\leq&
		\sup_{a\geq\bar a}\abs{\vphi(\tau+t;\tau,a)-(\alpha-1)}+
		\sup_{a\geq\bar a}\abs{\alpha-1-F^t(a)}\\
		&\xrightarrow[t\to\infty]{}0
	\end{align*}
	and therefore \eqref{thmaa1} is valid with arbitrary subsets $A\subseteq[\bar a,\infty)$. 
\end{example}

We continue with two sufficient criteria for the condition \eqref{thmaa1} to hold. Thereto we assume in the remaining subsection that $(X,\norm{\cdot})$ is a Banach space. 
\begin{theorem}[Asymptotically autonomous linear difference equations]
	Suppose that $\eL_t,\eL\in L(X)$, $b_t,b\in X$, $t\in\I$, satisfy
	\begin{align}
		\lim_{t\to\infty}\eL_t&=\eL,&
		\lim_{t\to\infty}b_t&=b.
		\label{exlin1}
	\end{align}
	If $\rho(\eL)<1$, then \eqref{deqa} with right-hand side $\eF(u)=\eL u+b$ and \eqref{deq} with right-hand side $\eF_t(u)=\eL_tu+b_t$ fulfill the limit relation \eqref{thmaa1} on every bounded subset $A\subseteq X$.
\end{theorem}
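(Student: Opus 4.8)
The plan is to reduce everything to the variation of constants formula \eqref{voc} together with an elementary decay estimate for the transition operator. Since the limit relation \eqref{thmaa1} and the notion of a bounded set $A$ are unaffected when $\norm{\cdot}$ is replaced by an equivalent norm, and $\rho(\eL)<1$, I first pass to an adapted norm in which $\norm{\eL}\le q<1$ for some $q$; fix $Q\in(q,1)$. By \eqref{exlin1} there is a $T_0\in\I$ with $\norm{\eL_t}\le Q$ for all $t\ge T_0$, while $\delta_t:=\norm{\eL_t-\eL}$ and $\beta_t:=\norm{b_t-b}$ tend to $0$; set $\bar\delta:=\sup_t\delta_t$, $\bar b:=\sup_t\norm{b_t}$ and $R:=\sup_{a\in A}\norm{a}$, all finite. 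For $\eF_t(u)=\eL_tu+b_t$ the mapping $\eN_t$ in \eqref{rhssemi} is the constant $b_t$, so \eqref{voc} gives $\vphi(\tau+s;\tau,a)=\Phi(\tau+s,\tau)a+\sum_{r=0}^{s-1}\Phi(\tau+s,\tau+r+1)b_{\tau+r}$, whereas $\eF^s(a)=\eL^sa+\sum_{r=0}^{s-1}\eL^{s-1-r}b$. Subtracting and regrouping yields, for every $a\in A$,
\begin{align*}
	\vphi(\tau+s;\tau,a)-\eF^s(a)
	&=\intoo{\Phi(\tau+s,\tau)-\eL^s}a\\
	&\quad+\sum_{r=0}^{s-1}\intoo{\Phi(\tau+s,\tau+r+1)-\eL^{s-1-r}}b_{\tau+r}\\
	&\quad+\sum_{r=0}^{s-1}\eL^{s-1-r}\intoo{b_{\tau+r}-b},
\end{align*}
and it suffices to show that all three terms tend to $0$ as $s\to\infty$, uniformly in $a\in A$.

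For the transition operator I would first record the geometric bound: absorbing the finitely many factors $\eL_t$ with index $<T_0$ (at most $T_0-\tau$ of them) into a constant $\hat M_\tau\ge 1$ depending only on $\tau$, one obtains $\norm{\Phi(t,r)}\le\hat M_\tau Q^{t-r}$ for all $\tau\le r\le t$, an estimate of the form \eqref{lemabs1}. Next I use the discrete Duhamel identity $\Phi(t,r)-\eL^{t-r}=\sum_{k=r}^{t-1}\eL^{t-1-k}\intoo{\eL_k-\eL}\Phi(k,r)$, obtained by unrolling the recursion $E(t)=\eL E(t-1)+\intoo{\eL_{t-1}-\eL}\Phi(t-1,r)$ for $E(t):=\Phi(t,r)-\eL^{t-r}$ starting from $E(r)=0$. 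Combining it with $\norm{\eL^m}\le q^m\le Q^m$ and the geometric bound, and using $q\le Q$ so that $q^{t-1-k}Q^{k-r}\le Q^{t-1-r}$, gives the key estimate
$$
	\norm{\Phi(t,r)-\eL^{t-r}}\le\hat M_\tau Q^{t-1-r}\sum_{k=r}^{t-1}\delta_k
	\fall\tau\le r\le t.
$$

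It remains to insert these bounds into the three terms. The first is $\le R\hat M_\tau Q^{s-1}\sum_{k=\tau}^{\tau+s-1}\delta_k\le R\hat M_\tau\bar\delta\,sQ^{s-1}\to 0$. For the third, the substitution $m=s-1-r$ turns it into $\sum_{m=0}^{s-1}q^m\beta_{\tau+s-1-m}$, which tends to $0$ since $\beta_{\tau+s-1-m}\to 0$ for each fixed $m$ while the convergent series $\sum_m q^m$ dominates (split into finitely many ``old'' indices and a small geometric tail). The delicate term is the second one: it is bounded by $\bar b\hat M_\tau\sum_{r=0}^{s-1}Q^{s-2-r}\sum_{k=\tau+r+1}^{\tau+s-1}\delta_k$, and I expect this to be the main obstacle, because the crude bound $\sum_{k=\tau+r+1}^{\tau+s-1}\delta_k\le(s-1-r)\bar\delta$ only yields boundedness, not decay. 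The remedy is to interchange the order of summation: collecting the coefficient of each $\delta_k$ and summing the resulting geometric series in $r$ recasts the double sum, up to the constant $\frac{\bar b\hat M_\tau}{Q(1-Q)}$, as $\sum_{l=1}^{s-1}Q^l\delta_{\tau+s-l}$, which again tends to $0$ by the convolution argument used for the third term. Since the only dependence on $a$ occurred in the first term and was already absorbed into $R$, all three estimates are uniform in $a\in A$, and \eqref{thmaa1} follows. Apart from this summation interchange, the only other point requiring care is the blocking argument for the geometric bound on $\Phi$ if one prefers not to pass to an adapted norm.
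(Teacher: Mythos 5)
Your proposal is correct, but it takes a genuinely different route from the paper's proof. The paper argues via the admissibility results of Aulbach and van Minh: from \eqref{exlin1} and $\rho(\eL)<1$ it infers uniform exponential stability and, via $\ell^\infty$-admissibility, boundedness of $\vphi(\cdot;\tau,a)$ uniformly in $a$ from bounded sets; it then observes that $w_t:=\vphi(t;\tau,a)-\eF^{t-\tau}(a)$ solves the \emph{autonomous} equation $w_{t+1}=\eL w_t+\tilde b_t$ with forcing $\tilde b_t=(\eL_t-\eL)\vphi(t;\tau,a)+b_t-b\to0$ uniformly, and concludes $w_t\to0$ by quoting $c_0$-admissibility. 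You instead subtract the two variation-of-constants representations \eqref{voc}, control $\Phi(t,r)-\eL^{t-r}$ through the discrete Duhamel identity, and finish with explicit convolution estimates; your adapted-norm/blocking bound $\norm{\Phi(t,r)}\leq\hat M_\tau Q^{t-r}$ is the hands-on substitute for the cited uniform exponential stability, and the interchange of summation in your second term plays exactly the role of the cited $c_0$-admissibility (your estimates there, including the constant $\tfrac{\bar b\hat M_\tau}{Q(1-Q)}$ and the limits $sQ^{s-1}\to0$ and the two convolution arguments, are correct). What your route buys is a self-contained elementary proof in which the uniformity in $a\in A$ is completely transparent, since $a$ enters only the first term; what the paper's route buys is brevity, outsourcing both the stability statement and the limit argument to the literature. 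One small point of hygiene: if $\I$ is unbounded below (e.g.\ $\I=\Z$), then \eqref{exlin1} controls only the behaviour as $t\to+\infty$, so your suprema $\bar\delta$, $\bar b$ and the constant absorbed into $\hat M_\tau$ should be taken over $t\geq\tau$, which suffices because \eqref{thmaa1} for a fixed $\tau$ involves only these indices; the paper's proof makes the same tacit restriction when it calls $(\eL_t)$, $(b_t)$ bounded.
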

\begin{proof}
	We note that \eqref{exlin1} implies that \eqref{deq} is uniformly exponentially stable and that the sequences $(\eL_t)_{t\in\I}$, $(b_t)_{t\in\I}$ are bounded. Thus, \cite[Cor.~5 with $H=\ell^\infty(\I,X)$]{aulbach:minh:96} implies that $\vphi(\cdot;\tau,0)$ is bounded and the representation $\vphi(t;\tau,u_\tau)=\Phi(t,\tau)u_\tau+\vphi(t;\tau,0)$ for all $t\geq\tau$, $u_\tau\in X$ shows that $\vphi(\cdot;\tau,u_\tau)$ is bounded uniformly in $u_\tau$ from bounded subsets of $X$. Now it is easy to see that the difference $\vphi(t;\tau,a)-\eF^{t-\tau}(a)$ solves the initial value problem
	\begin{align*}
		w_{t+1}&=\eL w_t+\tilde b_t,&
		u_\tau&=0,
	\end{align*}
	whose inhomogeneity $\tilde b_t:=(\eL_t-\eL)\vphi(t;\tau,a)+b_t-b$ satisfies $\lim_{t\to\infty}\tilde b_t=0$ uniformly in $a$ from bounded subsets of $X$. Now using \cite[Cor.~5 with $H=c_0(\I,X)$]{aulbach:minh:96} guarantees that the sequence $\vphi(t;\tau,a)-\eF^{t-\tau}(a)$ converges to $0$ as $t\to\infty$ uniformly in $a$ from bounded subsets of $X$, that is, in particular \eqref{thmaa1} holds. 
	\qed
\end{proof}

\begin{theorem}[Asymptotically autonomous semilinear difference equations]\label{thmaasl}
	Let $\eF_t:U\to U$ be of semilinear form \eqref{rhssemi} such that
	\begin{equation}
		\norm{\Phi(t,s)}\leq K\alpha^{t-s}\fall s\leq t
		\label{thmaasl1}
	\end{equation}
	and
	\begin{equation}
		\norm{\eN_t(u)-\eN_t(\bar u)}\leq L\norm{u-\bar u}\fall t\in\I,\,u,\bar u\in U
		\label{thmaasl2}
	\end{equation}
	with $K\geq 1$, $\alpha\in(0,1)$ and $L\in(0,\tfrac{1-\alpha}{K})$. If $\eL\in L(X)$ and $\eN:U\to X$ satisfy
	\begin{itemize}
		\item[(i)] there exists a $K_1\geq 0$ such that $\norm{\eL_t-\eL}\leq K_1\alpha^t$ for all $t\in\I$, 

		\item[(ii)] for every $r>0$ there exists a $K_2(r)\geq 1$ such that
		$$
			\sup_{u\in U\cap B_r(0)}\norm{\eN_t(u)-\eN(u)}\leq K_2(r)\alpha^t\fall t\in\I, 
		$$
	\end{itemize}
	then \eqref{deqa} with right-hand side $\eF(u)=\eL u+\eN(u)$ and \eqref{deq} fulfill the limit relation \eqref{thmaa1} even exponentially on every bounded subset $A\subseteq X$.
\end{theorem}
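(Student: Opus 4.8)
The plan is to estimate the difference $w_t:=\vphi(t;\tau,a)-\eF^{t-\tau}(a)$ between the solution of \eqref{deq} and the corresponding solution of the limit equation \eqref{deqa} starting at the same $a\in A$, and to show that $\sup_{a\in A}\norm{w_{\tau+s}}$ decays geometrically as $s\to\infty$; this is precisely \eqref{thmaa1} in the claimed exponential form. Since the perturbation estimates (i), (ii) will be invoked along both orbits, the first step is to produce a radius $r>0$ (depending on $A$) such that $\vphi(t;\tau,a),\eF^{t-\tau}(a)\in\bar B_r(0)$ for all $a\in A$ and $t\geq\tau$.

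These a priori bounds follow from \lref{lemabs}. From the Lipschitz condition \eqref{thmaasl2} and (ii) one obtains a linear growth bound $\norm{\eN_t(u)}\leq b+L\norm{u}$ with a constant $b<\infty$ independent of $t$ (estimate $\norm{\eN_t(u)}$ against a fixed base point $u_0\in U$ and use $\norm{\eN_t(u_0)}\leq\norm{\eN(u_0)}+K_2(r_0)\alpha^t$). Because $L<\tfrac{1-\alpha}{K}$ gives $\alpha+KL<1$, the estimate \eqref{lemabs3} with $\alpha_r\equiv\alpha$, $a_r\equiv L$, $b_r\equiv b$ yields $\norm{\vphi(t;\tau,u_\tau)}\leq K\norm{u_\tau}+\tfrac{Kb}{1-\alpha-KL}$; since (ii) also forces $\eN$ to be globally Lipschitz with the same constant $L$, the same reasoning applied to \eqref{deqa} bounds $\eF^{t-\tau}(a)$ as well, and $r$ is fixed accordingly. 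One further records the exponential bound $\norm{\eL^n}\leq K\alpha^n$ for the limit operator: by (i) one has $\eL_t\to\eL$ in operator norm, whence $\eL_{t-1}\cdots\eL_{t-n}\to\eL^n$ as $t\to\infty$, while $\norm{\eL_{t-1}\cdots\eL_{t-n}}=\norm{\Phi(t,t-n)}\leq K\alpha^n$ by \eqref{thmaasl1}.

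Next I would write down the recursion for $w_t$. Using $\eF_t=\eL_t+\eN_t$ and $\eF=\eL+\eN$, a short computation gives $w_\tau=0$ and $w_{t+1}=\eL w_t+g_t$ with
$$
	g_t:=(\eL_t-\eL)\vphi(t;\tau,a)+\bigl(\eN_t(\vphi(t;\tau,a))-\eN_t(\eF^{t-\tau}(a))\bigr)+\bigl(\eN_t(\eF^{t-\tau}(a))-\eN(\eF^{t-\tau}(a))\bigr),
$$
so that (i), \eqref{thmaasl2} and (ii) yield $\norm{g_t}\leq c\alpha^t+L\norm{w_t}$ with $c:=K_1r+K_2(r)$. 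Unfolding the affine recursion via $w_t=\sum_{s=\tau}^{t-1}\eL^{t-1-s}g_s$, using $\norm{\eL^n}\leq K\alpha^n$ and rescaling by $p_s:=\alpha^{-(\tau+s)}\norm{w_{\tau+s}}$ (so $p_0=0$), the $\alpha^t$-contributions of the $g_s$ collapse and one is left with a linear-in-$s$ inhomogeneity:
$$
	p_s\leq\tfrac{Kc}{\alpha}\,s+\tfrac{KL}{\alpha}\sum_{j=0}^{s-1}p_j\fall s\in\N_0.
$$
The discrete Gr\"onwall inequality \cite[p.~348, Prop.~A.2.1(a)]{poetzsche:10b} then gives $p_s\leq\tfrac{Kc}{\alpha}\,s\bigl(1+\tfrac{KL}{\alpha}\bigr)^{s}$, hence
$$
	\sup_{a\in A}\norm{w_{\tau+s}}\leq\tfrac{Kc}{\alpha}\,\alpha^{\tau}\,s\,(\alpha+KL)^{s}\xrightarrow[s\to\infty]{}0,
$$
because $\alpha+KL<1$; as $c$ depends only on a bound for $A$, this is \eqref{thmaa1} with exponential rate, uniformly in $a\in A$, and $\Omega_\cA(\tau)=A^\ast$ for all $\tau$ now follows as in \tref{thmaa}.

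The main obstacle is the bookkeeping of two exponential scales: $\Phi$ and $\eL^n$ decay like $\alpha^{t-s}$, while the perturbations $\eL_t-\eL$ and $\eN_t-\eN$ decay like $\alpha^{t}$, and matching these exponents in the discrete convolution forces out the extra factor $s$. The point of the hypothesis $L<\tfrac{1-\alpha}{K}$ is exactly that the Gr\"onwall growth rate $1+\tfrac{KL}{\alpha}$ recombines with $\alpha$ into the genuine contraction factor $\alpha+KL<1$, which absorbs that polynomial factor. A secondary subtlety is that the exponential bound $\norm{\eL^n}\leq K\alpha^n$, needed for the limit equation, is not assumed but has to be derived from (i) and \eqref{thmaasl1} by the limiting argument above.
\qed
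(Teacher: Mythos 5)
Your argument is correct and rests on the same skeleton as the paper's proof --- write $w_t=\vphi(t;\tau,a)-\eF^{t-\tau}(a)$ as a perturbed linear recursion with $w_\tau=0$, unfold it by variation of constants, insert (i), (ii) and \eqref{thmaasl2}, and close with the discrete Gr\"onwall inequality using $\alpha+KL<1$ --- but with the mirror-image decomposition. The paper keeps the nonautonomous operator $\eL_t$ on $w_t$ and evaluates the perturbations $(\eL_t-\eL)v_t$, $\eN_t(v_t)-\eN(v_t)$ only along the autonomous orbit $v_t=\eF^{t-\tau}(a)$; hence it can use \eqref{thmaasl1} directly for the homogeneous part and only needs boundedness of autonomous solutions (its Step (I), via \cite[p.~155, Thm.~3.5.8(a)]{poetzsche:10b}). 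You keep the limit operator $\eL$ on $w_t$ and apply $\eL_t-\eL$ to the nonautonomous orbit, which is why you must additionally derive $\norm{\eL^n}\leq K\alpha^n$ from (i) and \eqref{thmaasl1} by letting $t\to\infty$ in $\Phi(t,t-n)$, and bound $\vphi(t;\tau,a)$ a priori via \lref{lemabs}. Your route has the merit of making explicit two facts the paper's citation uses tacitly (that (ii) forces $\eN$ to be $L$-Lipschitz, and that $\eL$ inherits the exponential bound); the paper's splitting is leaner and gives the cleaner rate $(\alpha+KL)^s$, whereas your Gr\"onwall bookkeeping produces the extra factor $s$ --- which you could avoid by summing the geometric series instead of majorising by $s(1+KL/\alpha)^s$, and which in any case still yields exponential decay. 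Two harmless blemishes: your constant $b$ in the linear growth bound for $\eN_t$ is only uniform for $t\geq\tau$ (if $\I$ is unbounded below, $K_2(r_0)\alpha^t$ grows as $t\to-\infty$), which suffices since \eqref{thmaa1} is a statement for each fixed $\tau$; and the closing remark about $\Omega_\cA(\tau)=A^\ast$ belongs to \tref{thmaa}, not to the statement being proved.
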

The assumption \eqref{thmaasl1} holds in case $\rho(\eL)<1$ with $\alpha\in(\rho(\eL),1)$ (see \cite[Thm.~5]{przyluski:88}). 
\begin{proof}
	We proceed in two steps:

	(I) Claim: \emph{All solutions to the autonomous equation \eqref{deqa} are bounded, i.e.,\ 
	\begin{equation}
		\norm{\eF^t(a)}\leq K\norm{a}+\frac{K}{1-\alpha-KL}\norm{\eN(0)}
		\fall t\geq 0,\,a\in X.
	\end{equation}}
	This is a consequence of \cite[p.~155, Thm.~3.5.8(a)]{poetzsche:10b}.

	(II) Let $a\in U$ and we abbreviate $u_t:=\vphi(t;\tau,a)$, $v_t:=\eF^{t-\tau}(a)$. Due to step (I) the sequence $(v_t)_{t\geq\tau}$ is bounded and we choose $r>0$ so large that $\norm{v_t}<r$ for all $t\in\I$. It is easy to see that the difference $w_t:=u_t-v_t$ satisfies the equation
	\begin{equation}
		w_{t+1}=\eL_tw_t+(\eL_t-\eL)v_t+\eN_t(v_t)-\eN(v_t)+\eN_t(u_t)-\eN_t(v_t)
	\end{equation}
	and fulfills the initial condition $u_\tau-v_\tau=a-a=0$. Using the variation of constants formula \cite[p.~100, Thm.~3.1.16]{poetzsche:10b} results
	$$
		w_t
		=
		\sum_{s=\tau}^{t-1}\Phi(t,s+1)\intcc{(\eL_s-\eL)v_s+\eN_s(v_s)-\eN(v_s)+\eN_s(u_s)-\eN_s(v_s)}
		\fall t\geq\tau, 
	$$
	consequently
	\begin{eqnarray*}
		&&
		\norm{w_t}\alpha^{-t}\\
		& \stackrel{\eqref{thmaasl1}}{\leq} &
		\frac{K}{\alpha}\sum_{s=\tau}^{t-1}\alpha^{-s}\norm{(\eL_s-\eL)v_s+\eN_s(v_s)-\eN(v_s)}
		+
		\frac{K}{\alpha}\sum_{s=\tau}^{t-1}\alpha^{-s}\norm{\eN_s(u_s)-\eN_s(v_s)}\\
		& \stackrel{\eqref{thmaasl2}}{\leq} &
		\frac{K}{\alpha}\sum_{s=\tau}^{t-1}\alpha^{-s}\norm{(\eL_s-\eL)v_s+\eN_s(v_s)-\eN(v_s)}
		+
		\frac{KL}{\alpha}\sum_{s=\tau}^{t-1}\alpha^{-s}\norm{w_s}
	\end{eqnarray*}
	and the Gr\"onwall inequality \cite[p.~348, Prop.~A.2.1(a)]{poetzsche:10b} yields
	$$
		\norm{w_t}
		\leq
		\frac{K}{\alpha}\sum_{s=\tau}^{t-1}(\alpha+KL)^{t-s-1}\norm{(\eL_s-\eL)v_s+\eN_s(v_s)-\eN(v_s)}
		\fall\tau\leq t
	$$
	If we replace $t$ by $\tau+t$, then it results
	\begin{eqnarray*}
		&&
		\norm{\vphi(\tau+t;\tau,a)-\eF^t(a)}\\
		& \leq &
		\frac{K}{\alpha}\sum_{s=\tau}^{t+\tau-1}(\alpha+KL)^{t+\tau-s-1}\norm{(\eL_s-\eL)v_s+\eN_s(v_s)-\eN(v_s)}\\
		& \stackrel{(i)}{\leq} &
		\frac{K}{\alpha}\sum_{s=\tau}^{t+\tau-1}(\alpha+KL)^{t+\tau-s-1}\intoo{K_1r\alpha^s+\norm{\eN_s(v_s)-\eN(v_s)}}\\
		& \stackrel{(ii)}{\leq} &
		\frac{K\intoo{K_1r+K_2(r)}}{\alpha}(\alpha+KL)^{t+\tau-1}
		\sum_{s=\tau}^{t+\tau-1}\intoo{\tfrac{\alpha}{\alpha+KL}}^s
		\xrightarrow[t\to\infty]{}0
	\end{eqnarray*}
	and therefore even exponential convergence holds in \eqref{thmaa1}. 
	\qed
\end{proof}
\subsection{Forward attractors}
\label{sec43}
In the previous \sref{sec32} we constructed pullback attractors of nonautonomous difference equations \eqref{deq} by means of \pref{prop31} applied to a pullback absorbing, positively invariant nonautonomous set. Now it is our goal is to obtain a corresponding concept in forward time. 

Mimicking the approach for pullback attractors, we define a \emph{forward attractor} $\cA^+\subset\cU$ of \eqref{deq} as a nonempty, compact and invariant nonautonomous set forward attracting every bounded subset $\cB\subseteq\cU$, i.e.,\ 
\begin{align}
	\lim_{s\to\infty}\dist{\vphi(\tau+s;\tau,\cB(\tau)),\cA^+(\tau+s)}=0 \fall\tau\in\I. 
	\label{nofa}
\end{align}
As demonstrated in e.g.\ \cite[Sect.~4]{kloeden:yang:16}, forward attractors need not to be unique. They are Lyapunov asymptotically stable, that is, Lyapunov stable and attractive in the sense of \eqref{nofa} (see \cite[Prop.~3.1]{kloeden:lorenz:16}). While it is often claimed in the literature that there is no counterpart to the characterisation \eqref{charpb} of pullback attractors $\cA^\ast$ for forward attractors $\cA^+$ of nonautonomous equations, a suitable construction will be given now. 

A nonautonomous difference equation \eqref{deq} is denoted as \emph{forward dissipative}, if there exists a bounded set $\cA\subseteq\cU$ such that for every $\tau\in\I$ and bounded $\cB\subseteq\cU$ there is an \emph{absorption time} $S=S(\tau,\cB)\in\N$ such that 
\begin{equation}
	\vphi(\tau+s;\tau,\cB(\tau))\subseteq\cA(\tau+s)\fall s\geq S;
	\label{fordiss}
\end{equation}
one says that $\cA$ is as a \emph{forward absorbing} set. 
\begin{proposition}[Forward absorbing set]\label{propfwab}
	On a Banach space $X$, let $\eF_t:U\to U$ be of the semilinear form \eqref{rhssemi} satisfying \eqref{lemabs1}, \eqref{lemabs2} and let $\rho>0$. If the limit relations
	\begin{align*}
		\lim_{s\to\infty}\prod_{r=\tau}^{\tau+s-1}(\alpha_r+Ka_r)&=0,&
		R_\tau&:=K\lim_{t\to\infty}\sum_{s=\tau}^{t-1}b_s\prod_{r=s+1}^{t-1}(\alpha_r+Ka_r)<\infty
	\end{align*}
	hold for all $\tau\in\I$ and $\sup_{\tau\in\I}R_\tau<\infty$, then the difference equation \eqref{deq} is forward dissipative with absorbing set $\cA:=\set{(t,u)\in\cU:\,\norm{u}\leq\rho+\sup_{\tau\in\I}R_\tau}$. 
\end{proposition}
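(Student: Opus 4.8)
The plan is to mimic the proof of \pref{proppbab}, namely to extract the required estimate directly from \lref{lemabs} and then pass to the forward limit $t\to\infty$. First I would fix $\tau\in\I$ and a bounded nonautonomous set $\cB\subseteq\cU$; since $\cB$ is bounded there is an $M>0$ with $\norm{u}\leq M$ for all $u\in\cB(\tau)$. Applying \lref{lemabs} with initial time $\tau$ gives, for every $s\in\N_0$ and every $u_\tau\in\cB(\tau)$,
\begin{equation*}
	\norm{\vphi(\tau+s;\tau,u_\tau)}
	\leq
	KM\prod_{r=\tau}^{\tau+s-1}(\alpha_r+Ka_r)
	+K\sum_{\sigma=\tau}^{\tau+s-1}b_\sigma\prod_{r=\sigma+1}^{\tau+s-1}(\alpha_r+Ka_r).
\end{equation*}
The first summand tends to $0$ as $s\to\infty$ by the first hypothesis, while the second summand is bounded above by $R_\tau$, which is finite by the second hypothesis (and even bounded uniformly in $\tau$).

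Next I would turn these two facts into the absorption property \eqref{fordiss} for the set $\cA:=\set{(t,u)\in\cU:\,\norm{u}\leq\rho+\sup_{\tau\in\I}R_\tau}$. Choose the absorption time $S=S(\tau,\cB)\in\N$ so large that $KM\prod_{r=\tau}^{\tau+s-1}(\alpha_r+Ka_r)<\rho$ for all $s\geq S$; this is possible precisely because of the first limit relation. Then for $s\geq S$ and $u_\tau\in\cB(\tau)$ the displayed estimate yields $\norm{\vphi(\tau+s;\tau,u_\tau)}<\rho+R_\tau\leq\rho+\sup_{\tau\in\I}R_\tau$, i.e.\ $\vphi(\tau+s;\tau,\cB(\tau))\subseteq\cA(\tau+s)$, which is exactly \eqref{fordiss}. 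Since $\cA$ is clearly a bounded nonautonomous set with constant fibres $\bar B_{\rho+\sup R_\tau}(0)\cap U$, this shows that \eqref{deq} is forward dissipative with forward absorbing set $\cA$.

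One point that deserves a brief remark is that the tail sum defining $R_\tau$ in the present forward setting, $R_\tau=K\lim_{t\to\infty}\sum_{\sigma=\tau}^{t-1}b_\sigma\prod_{r=\sigma+1}^{t-1}(\alpha_r+Ka_r)$, must be monotone or at least the partial sums must be controlled uniformly, so that the finite partial sums appearing in \lref{lemabs} are bounded by $R_\tau$; if the factors $\alpha_r+Ka_r$ are not all $\leq 1$ this needs a word of justification, but the existence of the limit together with the summands being nonnegative gives the bound on each partial sum once one also uses $\prod_{r=\sigma+1}^{\tau+s-1}(\alpha_r+Ka_r)\leq\lim_{t\to\infty}\prod_{r=\sigma+1}^{t-1}(\alpha_r+Ka_r)$ whenever this limit is $\geq 1$; in the generic case it suffices to observe that $\sum_{\sigma=\tau}^{\tau+s-1}b_\sigma\prod_{r=\sigma+1}^{\tau+s-1}(\alpha_r+Ka_r)$ is itself a partial sum of the convergent series and hence bounded. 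I expect this bookkeeping about the tail series to be the only genuinely delicate step; the rest is a direct translation of the pullback argument into forward time.
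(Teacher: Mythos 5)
Your overall route is the same as the paper's: the published proof consists precisely of the remark that the claim ``follows readily'' from the estimate \eqref{lemabs3} in \lref{lemabs} by letting $s\to\infty$ in forward time, which is what you carry out. However, the one step you yourself single out as delicate is handled incorrectly. The quantity $S_s:=\sum_{\sigma=\tau}^{\tau+s-1}b_\sigma\prod_{r=\sigma+1}^{\tau+s-1}(\alpha_r+Ka_r)$ is \emph{not} a partial sum of a fixed convergent series: every term changes when $s$ increases, because the upper limit of the inner product is $\tau+s-1$. It satisfies the recursion $S_{s+1}=(\alpha_{\tau+s}+Ka_{\tau+s})S_s+b_{\tau+s}$, and it can exceed its limit $R_\tau/K$ for \emph{every} $s$; for instance $b_\tau=1$, $b_t=0$ for $t>\tau$ and $\alpha_t+Ka_t\equiv\tfrac12$ give $S_s=2^{1-s}>0=R_\tau/K$ for all $s\in\N$. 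So the assertion ``the second summand is bounded above by $R_\tau$'' is false in general, and the fallback argument involving $\prod_{r=\sigma+1}^{\tau+s-1}(\alpha_r+Ka_r)\leq\lim_{t\to\infty}\prod_{r=\sigma+1}^{t-1}(\alpha_r+Ka_r)$ does not help either (that limit need not exist, and under the first hypothesis the tail products typically shrink rather than dominate). Note the contrast with the pullback situation of \pref{proppbab}, where the upper limit $\tau-1$ of the inner product is fixed and the sum genuinely grows by adding nonnegative terms, so there the ``partial sum of a convergent series'' reasoning is legitimate; this is presumably what misled you.

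The gap is easy to close, and then your argument is complete: the second hypothesis says exactly that $KS_s\to R_\tau$ as $s\to\infty$ for each fixed $\tau$. Hence choose the absorption time $S=S(\tau,\cB)$ so large that for all $s\geq S$ both $KM\prod_{r=\tau}^{\tau+s-1}(\alpha_r+Ka_r)<\tfrac{\rho}{2}$ and $KS_s<R_\tau+\tfrac{\rho}{2}$ hold. Then \eqref{lemabs3} yields $\norm{\vphi(\tau+s;\tau,u_\tau)}<\rho+R_\tau\leq\rho+\sup_{\tau\in\I}R_\tau$ for all $u_\tau\in\cB(\tau)$ and $s\geq S$, i.e.\ $\vphi(\tau+s;\tau,\cB(\tau))\subseteq\cA(\tau+s)$, which is the forward absorption property \eqref{fordiss}. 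In other words, one must use the \emph{convergence} of the inhomogeneous term to $R_\tau$ (absorbing the slack into $\rho$), not a pointwise bound by $R_\tau$.
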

For constant positive sequences $\alpha_t\equiv\alpha$, $a_t\equiv a$, $b_t\equiv b$ in \eqref{lemabs2} satisfying $\alpha+Ka<1$, both the pullback absorbing set from \pref{proppbab} and the forward absorbing set from \pref{propfwab} have constant fibres and simplify to $\cA=\I\tm B_{\rho+\tfrac{Kb}{1-\alpha-aK}}(0)$. 
\begin{proof}
	The claim follows readily from relation \eqref{lemabs3} in \lref{lemabs}.
	\qed
\end{proof}

Using \cite[Prop.~3.2 with compact replaced by open and bounded]{kloeden:lorenz:16} one shows
\begin{proposition}\label{prop417}
	Every bounded forward attractor has a nonempty, positively invariant, closed and bounded forward absorbing set. 
\end{proposition}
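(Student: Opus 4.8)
The plan is to start from a bounded forward attractor $\cA^+$ and build an absorbing set out of a suitable neighbourhood of it, exploiting the Lyapunov asymptotic stability of forward attractors recalled just before the statement (see \cite[Prop.~3.1]{kloeden:lorenz:16}). First I would fix $R>0$ and $x_0\in X$ with $\cA^+(\tau)\subseteq B_R(x_0)$ for all $\tau\in\I$; the candidate absorbing set is something like the $\eps$-neighbourhood $\cB(\tau):=\overline{B_\eps(\cA^+(\tau))}$ for an appropriately chosen $\eps>0$ (it is bounded uniformly in $\tau$ since $\cA^+$ is), and I would invoke \cite[Prop.~3.2]{kloeden:lorenz:16} in the indicated modified form (\emph{compact replaced by open and bounded}) to produce the bounded, positively invariant, closed, forward absorbing set. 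The point of that cited proposition is precisely that Lyapunov stability of $\cA^+$ upgrades a forward-attracting neighbourhood into one that is also positively invariant: by stability, for a given outer tolerance there is an inner neighbourhood whose forward orbit never leaves the outer one, and intersecting the forward orbits of that inner neighbourhood (or taking the union of forward images and closing up) yields positive invariance while retaining boundedness and closedness.

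The key steps, in order, are: (1) record that $\cA^+$ is Lyapunov asymptotically stable, i.e.\ both forward attracting in the sense of \eqref{nofa} and Lyapunov stable — for every $\eps>0$ there is $\delta>0$ with $\vphi(\tau+s;\tau,B_\delta(\cA^+(\tau)))\subseteq B_\eps(\cA^+(\tau+s))$ for all $\tau\in\I$, $s\in\N_0$; (2) use attraction of an arbitrary bounded $\cB\subseteq\cU$ into $B_\delta(\cA^+)$ after some absorption time $S(\tau,\cB)$, then stability to keep it inside $B_\eps(\cA^+)$ thereafter, so that the closed $\eps$-neighbourhood of $\cA^+$ is forward absorbing in the sense of \eqref{fordiss}; (3) promote this neighbourhood to a positively invariant one by the construction in \cite[Prop.~3.2]{kloeden:lorenz:16} — fibrewise take $\cA(\tau):=\overline{\bigcup_{s\in\N_0}\vphi(\tau-s;\cdot)}$-type forward saturations restricted inside the stable neighbourhood, or more simply the closure of the forward-invariant hull, which stays inside $B_\eps(\cA^+)$ by stability hence is bounded uniformly in $\tau$; (4) check it is still forward absorbing (it contains the original neighbourhood) and closed and bounded by construction.

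The main obstacle I expect is step (3): making the positive invariance of the enlarged neighbourhood compatible simultaneously with boundedness uniform in $\tau$ and with closedness, in a \emph{non}-locally-compact Banach space where forward images $\vphi(\tau+s;\tau,\cdot)$ of bounded sets need not be precompact (a subtlety the authors flag explicitly elsewhere, e.g.\ in the proof of \lref{lem42}). The resolution is that one never needs compactness here — Lyapunov stability alone confines the forward hull of $B_\delta(\cA^+(\tau))$ inside $B_\eps(\cA^+(\tau+s))\subseteq B_{R+\eps}(x_0)$, which is a uniform bound, and closedness is imposed by taking closures fibrewise. So the "obstacle" is really just bookkeeping: choosing $\eps$ and $\delta$ in the right order and verifying that the closed positively invariant hull, which by definition satisfies $\eF_t(\cA(t))\subseteq\cA(t+1)$, still absorbs every bounded $\cB$ because it contains $\overline{B_\delta(\cA^+)}$ which already absorbs. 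All of this is exactly what the cited \cite[Prop.~3.2]{kloeden:lorenz:16} delivers once "compact" is relaxed to "open and bounded", so the proof reduces to verifying that its hypotheses (a bounded forward attractor, hence a Lyapunov asymptotically stable invariant nonautonomous set) are met and quoting it.
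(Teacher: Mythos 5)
Your proposal is correct and follows essentially the same route as the paper, which proves the statement simply by invoking \cite[Prop.~3.2 with compact replaced by open and bounded]{kloeden:lorenz:16} together with the Lyapunov asymptotic stability of forward attractors recorded just before the proposition; your sketch merely spells out the neighbourhood/forward-saturation construction underlying that citation.
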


First, this \pref{prop417} allows us to choose a closed and bounded, positively invariant set $\cA\subseteq\cU$. We then deduce a nonempty, invariant and compact nonautonomous set $\cA^\star\subseteq\cA$ from \pref{prop31}. 

Second, the construction of forward attractors requires $\I=\Z$. Different from the pullback situation (with $\cA$ being pullback absorbing), having an forward absorbing set $\cA$ does not ensure the forward convergence within $\cA$, i.e.,\ 
$$
	\lim_{s\to\infty}\dist{\vphi(\tau+s;\tau,\cA(\tau)),\cA^\star(\tau)}=0\fall\tau\in\Z
$$
and in particular not forward convergence of a general bounded nonautonomous set $\cB\subseteq\cU$ to $\cA^\ast$. This is because \eqref{deq} may have forward limit points starting in $\cA$ which are not forward limit points from within $\cA^\star$. Corresponding examples illustrating this are given in \cite{kloeden:poetzsche:rasmussen:11}. 

Now on the one hand, the set of forward $\omega$-limit points for the dynamics starting in $\cA^\star$ is given by
$$
	\omega_{\cA}^\star
	:=
	\bigcap_{\tau\in\Z} \overline{\bigcup_{0\leq s}\vphi(\tau+s;\tau,\cA^\star(\tau))}
	=
	\bigcap_{\tau\in\Z}\overline{\bigcup_{0\leq s}\cA^\star(\tau+s)}
	\subseteq U
$$
and is nonempty and compact as intersection of nested compact sets. It consists of all points $u\in U$ such that there is a sequence $\intoo{(s_n,a_n)}_{n\in\N}$ with $\lim_{n\to\infty}s_n=\infty$ and $a_n\in\cA(\tau+s_n)$ with $\tau\in\Z$ satisfying
\begin{align*}
	\lim_{n\to\infty}\vphi(\tau+s_n;\tau,a_n)=u.
\end{align*}

On the other hand, the set of forward limit points $\omega_{\cA}^+$ from within $\cA$ was constructed in \tref{thm46}. With $\cA$ being positively invariant, the chain of inclusions $\omega_{\cA}^-\subseteq\omega_{\cA}^+\subseteq K$ holds, while $\omega_{\cA}^\star$ is not necessarily contained in $\omega_\cA^-$ (see \eref{exbhex} for an illustration), as well as
\begin{align*}
	\lim_{t\to\infty}\dist{\cA^\star(t),\omega_{\cA}^-}=0.
\end{align*}
\begin{theorem}\label{thm418}
	Suppose that \eqref{deq} has a positively invariant, closed and bounded set $\cA\neq\emptyset$. If the assumptions in \pref{prop31} and Thms.~\ref{thm49}--\ref{thm410} hold, then the following statements are equivalent:
	\begin{enumerate}
		\item $\cA^\star$ is \emph{forward attracting} $\cA$, that is, 
		\begin{align}
			\lim_{s\to\infty}\dist{\vphi(\tau+s;\tau,\cA(\tau)),\cA^\star(\tau+s)}=0\fall\tau\in\Z,
			\label{thm49a}
		\end{align}
		
		\item $\omega_{\cA}^+=\omega_{\cA}^\star$.
	\end{enumerate}
\end{theorem}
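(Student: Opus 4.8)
The plan is to prove the two implications separately, using throughout the two characterisations available to us: the sequential description of $\omega_\cA^+$ in \rref{rem45} (points $v=\lim\vphi(\tau_n+s_n;\tau_n,a_n)$ with $\tau_n\to\infty$, $(\tau_n,a_n)\in\cA$) and the parallel description of $\omega_\cA^\star$ given just above the statement (points $u=\lim\vphi(\tau+s_n;\tau,a_n)$ with $s_n\to\infty$, $a_n\in\cA^\star(\tau+s_n)$, equivalently $u=\lim x_{m_n}$ along an entire solution $x$ lying in $\cA^\star$). I would first record the one inclusion that holds unconditionally, namely $\omega_\cA^\star\subseteq\omega_\cA^+$: any $u\in\omega_\cA^\star$ is a limit of points $\vphi(\tau+s_n;\tau,a_n)$ with $a_n\in\cA^\star(\tau+s_n)\subseteq\cA(\tau+s_n)$; setting $\tau_n:=\tau+s_n\to\infty$ and using that these points equal $\vphi(\tau_n;\tau_n,a_n)$ (i.e.\ $s=0$), \rref{rem45} puts $u\in\omega_\cA^+$. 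So the content of the equivalence (b) is really the reverse inclusion $\omega_\cA^+\subseteq\omega_\cA^\star$, and this is what (a) must be shown to be equivalent to.

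\emph{(a)$\Rightarrow$(b).} Assume the forward attraction \eqref{thm49a}. Take $v\in\omega_\cA^+$, so $v=\lim_n\vphi(\tau_n+s_n;\tau_n,a_n)$ with $\tau_n\to\infty$ and $a_n\in\cA(\tau_n)$. The idea is to approximate $\vphi(\tau_n+s_n;\tau_n,a_n)$ by a point of the fibre $\cA^\star(\tau_n+s_n)$, which is legitimate precisely by \eqref{thm49a}: for each $n$ there is a point $c_n\in\cA^\star(\tau_n+s_n)$ with $d(\vphi(\tau_n+s_n;\tau_n,a_n),c_n)$ as small as we like --- more carefully, first pick along the sequence a subsequence so that $\tau_{n_k}+s_{n_k}\to\infty$ (automatic since $\tau_n\to\infty$), then invoke \eqref{thm49a} at $\tau=\tau_{n}$ with $s=s_n$ to get $\dist{\vphi(\tau_n+s_n;\tau_n,\cA(\tau_n)),\cA^\star(\tau_n+s_n)}\to 0$, and choose $c_n$ in $\cA^\star(\tau_n+s_n)$ realising this distance up to $\tfrac1n$ (the fibres of $\cA^\star$ are compact by \pref{prop31}). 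Then $d(v,c_n)\to 0$; since $c_n\in\cA^\star(\tau_n+s_n)$ with $\tau_n+s_n\to\infty$, the sequential characterisation of $\omega_\cA^\star$ gives $v\in\omega_\cA^\star$. A small technical point to watch: in \eqref{thm49a} the time $\tau$ ranges over $\Z$ but the indices $s_n$ could be $0$; if $s_n=0$ then $\vphi(\tau_n;\tau_n,a_n)=a_n\in\cA(\tau_n)$ and one uses that $\cA^\star$ pullback attracts $\cA$ together with $\tau_n\to\infty$ to land the limit in $\omega_\cA^\star$ --- or one simply observes $\cA^\star(\tau_n)\subseteq\cA(\tau_n)$ is not enough and instead passes to $s_n\geq 1$ by the positive invariance of $\cA$, rewriting $a_n=\eF_{\tau_n-1}(\tilde a_n)$ with $\tilde a_n\in\cA(\tau_n-1)$. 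This bookkeeping around $s_n=0$ is the one mildly delicate point in this direction.

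\emph{(b)$\Rightarrow$(a).} Assume $\omega_\cA^+=\omega_\cA^\star$ and suppose \eqref{thm49a} fails: there is $\tau\in\Z$, $\eps>0$ and $s_n\to\infty$ with $\dist{\vphi(\tau+s_n;\tau,\cA(\tau)),\cA^\star(\tau+s_n)}\geq\eps$. As in the proof of \lref{lem42}, although the supremum need not be attained, pick $a_n\in\cA(\tau)$ with $\dist{\vphi(\tau+s_n;\tau,a_n),\cA^\star(\tau+s_n)}\geq\tfrac\eps2$. By the $\cA$-asymptotic compactness, the sequence $y_n:=\vphi(\tau+s_n;\tau,a_n)$ has a subsequence converging to some $v\in K$; by \rref{rem45} (with the trivial choice $\tau_n\equiv\tau$ --- wait, here $\tau$ is \emph{fixed}, so we are in the setting of \lref{lem42}/\rref{rem43} rather than \rref{rem45}) we get $v\in\Omega_\cA(\tau)\subseteq\omega_\cA^+=\omega_\cA^\star$. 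Now comes the crux: I must derive a contradiction from $v\in\omega_\cA^\star$ and $\dist{y_n,\cA^\star(\tau+s_n)}\geq\tfrac\eps2$, i.e.\ from the fact that the limit point $v$ of $y_n$ lies in $\omega_\cA^\star$ while $y_n$ stays $\tfrac\eps2$-away from the corresponding fibres $\cA^\star(\tau+s_n)$. This requires knowing that membership of $v$ in $\omega_\cA^\star$ forces $v$ to be close to $\cA^\star(\tau+s_n)$ for large $n$ --- equivalently, that $\dist{\omega_\cA^\star,\cA^\star(t)}\to 0$ as $t\to\infty$, or rather the one-sided statement that points of $\omega_\cA^\star$ are approximated by points of $\cA^\star(t)$ as $t\to\infty$. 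Here is where Thms.~\ref{thm49}--\ref{thm410} (the asymptotic invariance hypotheses assumed in the statement) must be brought in: asymptotic negative invariance of $\omega_\cA^+=\omega_\cA^\star$ says roughly that every $v\in\omega_\cA^\star$ is reached, to within $\eps$, by $\vphi(\cdot\,;\cdot,u_\eps^\ast)$ from a point $u_\eps^\ast\in\omega_\cA^\star$ at arbitrarily large times; combined with the relation $\lim_{t\to\infty}\dist{\cA^\star(t),\omega_\cA^-}=0$ recorded before the statement and the forward-orbit structure $\cA^\star(\tau+s)\supseteq$ images of $\cA^\star(\tau)$, one pushes $v$ back into the fibres $\cA^\star(\tau+s_n)$ up to an error that tends to $0$, contradicting $\dist{y_n,\cA^\star(\tau+s_n)}\geq\tfrac\eps2$ once $d(y_n,v)<\tfrac\eps4$.

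\textbf{Main obstacle.} The genuinely hard part is the (b)$\Rightarrow$(a) direction, and specifically establishing the link ``$v\in\omega_\cA^\star\Rightarrow v$ is asymptotically close to the fibres $\cA^\star(t)$''. Neither characterisation gives this for free: $\omega_\cA^\star$ is built from \emph{all} forward orbits starting in $\cA^\star$, not from a single one, so a point of $\omega_\cA^\star$ need not sit near $\cA^\star(t)$ for \emph{every} large $t$ but only along some subsequence, which is exactly the gap one must close using the asymptotic positive/negative invariance of $\omega_\cA^+$ from Thms.~\ref{thm49}--\ref{thm410} together with $\dist{\cA^\star(t),\omega_\cA^-}\to 0$. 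Getting the quantifiers in these asymptotic-invariance statements to mesh (the $\eps_n\searrow 0$ versus the fixed $\eps$ and the fixed $\tau$) is where the real work lies; the forward direction (a)$\Rightarrow$(b), by contrast, is essentially a one-line approximation once the $s_n=0$ case is disposed of.
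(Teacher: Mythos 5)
The decisive step of the direction (b)$\Rightarrow$(a) is missing from your proposal. You set up the contradiction exactly as the paper does: assuming \eqref{thm49a} fails at some $\tau$, you extract $y_n=\vphi(\tau+s_n;\tau,a_n)$ with $\dist{y_n,\cA^\star(\tau+s_n)}\geq\tfrac{\eps}{2}$ and a limit point $v\in\Omega_\cA(\tau)\subseteq\ocA=\omega_{\cA}^\star$. But at the crux you only outline a hoped-for argument (``one pushes $v$ back into the fibres up to an error that tends to $0$'') via the asymptotic negative invariance of \tref{thm410} and $\lim_{t\to\infty}\dist{\cA^\star(t),\omega_{\cA}^-}=0$, and you acknowledge yourself that making the quantifiers mesh is ``where the real work lies''. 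That is exactly the content a proof has to supply, and your sketch does not obviously close it: membership $v\in\omega_{\cA}^\star$ only yields approximation by points of $\cA^\star(t_k)$ along \emph{some} sequence of times, and \tref{thm410} produces, for given $u$, $\eps$, $T$, \emph{some} large time and some point $u^\ast_\eps\in\ocA$; neither is anchored to the specific times $\tau+s_n$ at which you need closeness. The paper argues differently at this point: it picks companion points $\tilde a_n\in\cA^\star(\tau+\tilde s_n)$, so that $d(\tilde y_n,\tilde a_n)\geq\tilde\eps$, notes that by the invariance of $\cA^\star$ (\pref{prop31}) and $\cA^\star(\tau)\subseteq\cA(\tau)$ both $\tilde y_n$ and $\tilde a_n$ lie in $\vphi(\tau+\tilde s_n;\tau,\cA(\tau))$, uses the $\cA$-asymptotic compactness to pass to limits $\tilde y\in\Omega_\cA(\tau)\subseteq\ocA$ and $\tilde a\in\omega_{\cA}^\star$ along a common subsequence while keeping $d(\tilde y,\tilde a)\geq\tilde\eps$, and then plays these two limit points off against the assumption $\ocA=\omega_{\cA}^\star$. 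The missing idea is thus to work with \emph{pairs} of points inside the same image set $\vphi(\tau+\tilde s_n;\tau,\cA(\tau))$, rather than trying to upgrade membership in $\omega_{\cA}^\star$ to proximity to prescribed fibres.

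There is also a genuine flaw in your (a)$\Rightarrow$(b) argument as written. You invoke \eqref{thm49a} ``at $\tau=\tau_n$ with $s=s_n$'' for a sequence $\tau_n\to\infty$, but \eqref{thm49a} is a limit in $s$ for each \emph{fixed} $\tau$ and carries no uniformity in $\tau$, so no smallness of $\dist{\vphi(\tau_n+s_n;\tau_n,\cA(\tau_n)),\cA^\star(\tau_n+s_n)}$ follows along such a diagonal sequence; note also that in \rref{rem45} the $s_n$ need not tend to infinity at all. Your repair of the $s_n=0$ case is likewise invalid: writing $a_n=\eF_{\tau_n-1}(\tilde a_n)$ with $\tilde a_n\in\cA(\tau_n-1)$ requires \emph{negative} invariance of $\cA$, whereas only positive invariance is assumed. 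Both difficulties disappear if you argue as the paper does: since $\omega_{\cA}^\star$ is closed and $\ocA$ is the closure of $\bigcup_{\tilde\tau}\Omega_\cA(\tilde\tau)$, it suffices to show $\Omega_\cA(\tilde\tau)\subseteq\omega_{\cA}^\star$ for each fixed $\tilde\tau$; for $v\in\Omega_\cA(\tilde\tau)$ the characterisation in \rref{rem43} provides $\tilde s_n\to\infty$ at this one time $\tilde\tau$, so \eqref{thm49a} applies directly and your approximation argument then goes through.
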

\begin{proof} %	\dontknow Thm 5.2 from Cui, Kloeden, Yang - proved by Huy
	($\Rightarrow$) Suppose that $\cA^\star$ is forward attracting from within $\cA$ and that $\omega_{\cA}^+\neq\omega_{\cA}^\star$. Since $\omega_{\cA}^\star\subseteq\omega_{\cA}^+$, there exists a point $\tilde{v}\in\omega_{\cA}^+\setminus\omega_{\cA}^\star$, i.e.,\ there are $\tilde{\tau}\in\Z$ and $\tilde{\eps}=\tilde{\eps}(\tilde{\tau})>0$ such that $\tilde{v}\in\Omega_\cA(\tilde{\tau})$ and 
	\begin{align}
		\dist{\tilde{v},\omega_{\cA}^\star}> 2\tilde{\eps}.
		\label{thm49c}
	\end{align}
	Since $\tilde{v}\in\Omega_\cA(\tilde{\tau})$, there exists a sequence $\intoo{(\tilde{s}_n,\tilde{b}_n)}_{n\in\N}$ with $\lim_{n\to\infty}\tilde{s}_n=\infty$ and points $\tilde{b}_n\in\cA(\tilde{\tau})$ satisfying $\dist{\tilde{v},\vphi(\tilde{\tau}+\tilde{s}_n;\tilde{\tau},\tilde{b}_n)}<\tilde{\eps}$. Moreover, by the forward attraction of $\cA^\star$, there exists an $s'>0$ such that
	\begin{align*}
		\dist{\vphi(\tilde{\tau}+s';\tilde{\tau},\cA(\tilde{\tau})),\cA^\star(\tilde{\tau}+s')}<\tilde{\eps}.
	\end{align*}
	Combining all of them, we obtain
	\begin{eqnarray*}
		\dist{\tilde{v},\cA^\star(\tilde{\tau}+\tilde{s}_n)}
		& \leq &
		\dist{\tilde{v},\vphi(\tilde{\tau}+\tilde{s}_n;\tilde{\tau},\tilde{b}_n)} + \dist{\vphi(\tilde{\tau}+\tilde{s}_n;\tilde{\tau},\tilde{b}_n),\cA^\star(\tau+\tilde{s}_n)}\\
		& \leq &
		\dist{\tilde{v},\vphi(\tilde{\tau}+\tilde{s}_n;\tilde{\tau},\tilde{b}_n)} + \dist{\vphi(\tilde{\tau}+\tilde{s}_n;\tilde{\tau},\cA(\tilde{\tau})),\cA^\star(\tau+\tilde{s}_n)}\\
		& < &
		\tilde{\eps}+\tilde{\eps}=2\tilde{\eps}.
	\end{eqnarray*}
	Since $\bigcap_{n\in\N}\overline{\bigcup_{m\geq n}\cA^\star(\tilde{\tau}+\tilde{s}_n)}\subseteq\omega_{\cA}^\star$ by definition, it then follows
	\begin{align*}
		\dist{\tilde{v},\omega_{\cA}^\star}
		\leq
		\operatorname{dist}\left(\tilde{v},\bigcap_{n\in\N}\overline{\bigcup_{m\geq n} \cA^\star(\tilde{\tau}+\tilde{s}_n)}\right)
		\leq
		\dist{\tilde{v},\cA^\star(\tilde{\tau}+\tilde{s}_n)} < 2\tilde{\eps},
	\end{align*}
	a contradiction to \eqref{thm49c}. Hence, $\omega_{\cA}^+=\omega_{\cA}^\star$ holds.
	
	($\Leftarrow$) Suppose that $\omega_{\cA}^+=\omega_{\cA}^\star$, i.e.,\ $\dist{\omega_{\cA}^+,\omega_{\cA}^\star}<\eps$ for all $\eps>0$, and that $\cA^\star$ is not forward attracting from within $\cA$, i.e.,\ there exist a real $\tilde{\eps}>0$ and a sequence $(\tilde{s}_n)_{n\in\N}$ in $\N_0$ with $\lim_{n\to\infty}\tilde{s}_n=\infty$ and
	\begin{align*}
		\dist{\vphi(\tau+\tilde{s}_n;\tau,\cA^\star(\tau)),\cA(\tau+\tilde{s}_n)}\geq 2\tilde{\eps} \fall n\in\N.
	\end{align*}
	Although there is no condition ensuring the set $\vphi(\tau+s_n;\tau,\cA(\tau))$ is compact, which means the supremum in the Hausdorff semidistance may not be attained, there still exists a point $\tilde{y}_n:=\vphi(\tau+s_n;\tau,\tilde{b}_n) \in\vphi(\tau+s_n;\tau,\cA(\tau))$ for all $n\in\N$ and $\tilde{b}_n\in\cA(\tau)$ such that
	\begin{align*}
		\dist{\vphi(\tau+\tilde{s}_n;\tau,\cA(\tau)),\cA^\star(\tau+\tilde{s}_n)}-\tilde{\eps} 
		&\leq
		\dist{\tilde{y}_n,\cA^\star(\tau+\tilde{s}_n)}\\
		&\leq
		\dist{\vphi(\tau+\tilde{s}_n;\tau,\cA(\tau)},\cA^\star(\tau+\tilde{s}_n)).
	\end{align*}
	The above inequalities in fact give $\dist{\tilde{y}_n,\cA^\star(\tau+\tilde{s}_n)}\geq\tilde{\eps}$ for all $n\in\N$. Moreover, take a point $\tilde{a}_n\in\cA^\star(\tau+\tilde{s}_n)$, then
	\begin{align*}
		d(\tilde{y}_n,\tilde{a}_n)\geq\dist{\tilde{y}_n,\cA(\tau+\tilde{s}_n)}\geq\tilde{\eps} \fall n\in\N.
	\end{align*}
	On the other hand, by assumptions and definitions, \eqref{deq} is $\cA$-asymptotically compact and both $\tilde{y}_n$ and $\tilde{a}_n$ are in $\vphi(\tau+s_n;\tau,\cA(\tau))$ for all $\tau\in\Z$, so
	\begin{align*}
		\dist{\tilde{y}_n,K}&\leq \dist{\vphi(\tau+\tilde{s}_n;\tau,\cA(\tau)),K} \xrightarrow[n\to\infty]{}0,\\
		\dist{\tilde{a}_n,K}&\leq \dist{\vphi(\tau+\tilde{s}_n;\tau,\cA(\tau)),K} \xrightarrow[n\to\infty]{}0,
	\end{align*}
	implying that both $\tilde{y}_n$ and $\tilde{a}_n$ are in $K$ as well. Additionally, since $K$ is compact, there are convergent subsequences $(\tilde{y}_{n_j})_{j\in\N}$ with limit $\tilde{y}\in K$ and $(\tilde{a}_{n_j})_{j\in\N}$ with limit $\tilde{a}\in K$. This implies $\tilde{y}\in\Omega_\cA(\tau)\subseteq\omega_{\cA}^+$ and $\tilde{a}\in\omega_{\cA}^\star$ by definitions. Combining this with $d(\tilde{y}_n,\tilde{a}_n)\geq\tilde{\eps}$ for all $n\in\N$, we arrive at the contradiction
	\begin{align*}
		\dist{\omega_{\cA}^+,\omega_{\cA}^\star}\geq\dist{\tilde{y}_n,\omega_{\cA}^\star}\geq d(\tilde{y},\tilde{a})\geq\tilde{\eps} \fall n\in\N,
	\end{align*}
	to the assumption. Thus, $\cA^\star$ is forward attracting from within $\cA$.	
	\qed
\end{proof}

\begin{corollary}\label{corfw}
	Suppose in addition that $\cA\subseteq\cU$ is forward absorbing. If $\omega_{\cA}^+=\omega_{\cA}^\star$ holds, then $\cA^\star$ is a forward attractor of \eqref{deq}. 
\end{corollary}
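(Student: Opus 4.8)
}
The plan is to check the two defining properties of a forward attractor for $\cA^\star$. Invariance, compactness and nonemptiness of $\cA^\star$ are already supplied by \pref{prop31}, whose hypotheses are in force; hence the entire content of the corollary is the attraction of an arbitrary bounded $\cB\subseteq\cU$. This I would deduce by combining the forward absorbing property of $\cA$ with the equivalence established in \tref{thm418}.

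Since $\omega_\cA^+=\omega_\cA^\star$ and all assumptions of \tref{thm418} hold, statement (a) of that theorem gives that $\cA^\star$ forward attracts $\cA$, that is,
\begin{align*}
	\lim_{s\to\infty}\dist{\vphi(\sigma+s;\sigma,\cA(\sigma)),\cA^\star(\sigma+s)}=0\fall\sigma\in\Z.
\end{align*}
Now fix $\tau\in\Z$ and a bounded nonautonomous set $\cB\subseteq\cU$. By forward dissipativity there is an absorption time $S=S(\tau,\cB)\in\N$ with $\vphi(\tau+S;\tau,\cB(\tau))\subseteq\cA(\tau+S)$; write $\sigma:=\tau+S$. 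For each $s\geq S$ the process property \eqref{semigroup} yields
\begin{align*}
	\vphi(\tau+s;\tau,\cB(\tau))
	=\vphi(\tau+s;\sigma,\vphi(\sigma;\tau,\cB(\tau)))
	\subseteq\vphi(\tau+s;\sigma,\cA(\sigma)),
\end{align*}
whence, by the monotonicity of the Hausdorff semidistance in its first argument,
\begin{align*}
	\dist{\vphi(\tau+s;\tau,\cB(\tau)),\cA^\star(\tau+s)}
	\leq
	\dist{\vphi(\tau+s;\sigma,\cA(\sigma)),\cA^\star(\tau+s)}.
\end{align*}
Writing $\tau+s=\sigma+(s-S)$ and letting $s\to\infty$, the right-hand side converges to $0$ by the displayed limit relation applied with $\sigma$ as initial time. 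Hence $\lim_{s\to\infty}\dist{\vphi(\tau+s;\tau,\cB(\tau)),\cA^\star(\tau+s)}=0$, i.e.\ \eqref{nofa} holds with $\cA^+=\cA^\star$; as $\tau$ and $\cB$ were arbitrary, $\cA^\star$ is a forward attractor of \eqref{deq}.

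I do not expect a serious obstacle: the proof is a reduction that transports the ``attraction from within $\cA$'' furnished by \tref{thm418} to ``attraction of arbitrary bounded sets'' via the absorbing property and the process identity. The only point needing a little care is that the absorption time $S$ depends on $\tau$ (and on $\cB$), so one must keep $\tau$ fixed before passing to the limit $s\to\infty$; this is harmless, since a forward attractor is only required to attract from each fixed initial time.
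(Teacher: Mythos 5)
Your proposal is correct and follows essentially the same route as the paper's proof: invariance, compactness and nonemptiness come from \pref{prop31}, forward attraction from within $\cA$ comes from \tref{thm418} via the hypothesis $\omega_{\cA}^+=\omega_{\cA}^\star$, and the reduction of an arbitrary bounded $\cB$ to $\cA$ uses the absorption time together with the process property \eqref{semigroup} and monotonicity of the Hausdorff semidistance. Your closing remark about keeping $\tau$ (and hence $S$) fixed before letting $s\to\infty$ is exactly the point the paper's chain of inequalities implicitly relies on.
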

\begin{proof}
	Due to \pref{prop31} the set $\cA^\star$ is already nonempty, compact, invariant and thus it suffices to show that $\cA^\star$ is forward attracting. Thereto, suppose that $\cB\subseteq\cU$ is bounded and choose $\tau\in\Z$ arbitrarily. With the forward absorption time $S\in\N$ we obtain from \tref{thm418} that
	\begin{eqnarray*}
		0
		&\leq&
		\dist{\vphi(\tau+s;\tau,\cB(\tau)),\cA^\star(\tau+s)}\\
		& \stackrel{\eqref{semigroup}}{=} &
		\dist{\vphi\bigl(\tau+s;\tau+S,\vphi(\tau+S,\tau,\cB(\tau))\bigr),\cA^\star(\tau+s)}\\
		& \stackrel{\eqref{fordiss}}{\leq} &
		\dist{\vphi(\tau+s;\tau+S,\cA(\tau+S)),\cA^\star(\tau+s)}
		\xrightarrow[s\to\infty]{\eqref{thm49a}}0
	\end{eqnarray*}
	and this yields the assertion. 
	\qed
\end{proof}

We close this section with a simple, yet illustrative example: 
\begin{example}[Beverton-Holt equation]\label{exbhex}
	Given reals $0<\alpha_-,\alpha_+$ we consider the asymptotically autonomous Beverton-Holt equation
	\begin{align}
		v_{t+1}&=\frac{\tilde a_tv_t}{1+v_t},&
		\tilde a_t&:=
		\begin{cases}
			\alpha_-,&t<0,\\
			\alpha_+,&0\leq t
		\end{cases}
		\label{nobhx}
	\end{align}
	in $U=\R_+$ having the general solution
	$$
		\vphi(t;\tau,v_\tau)
		=
		\frac{v_\tau\prod_{r=\tau}^{t-1}\tilde a_r}{1+v_\tau\sum_{s=\tau}^{t-1}\prod_{r=\tau}^{s-1}\tilde a_r}
		\fall\tau\leq t,\,0\leq v_\tau. 
	$$
	It possesses the absorbing set $\cA=\Z\tm[0,\max\set{\alpha_-,\alpha_+}+1]$ and the forward $\omega$-limit set $\omega_\cA^+=[0,\max\set{0,\alpha_+-1}]$. Depending on the constellation of the parameters $\alpha_-,\alpha_+$ one obtains the following capturing the forward dynamics:\\
	\begin{tabular}{c|c|c|c|c}
	$\alpha_-,\alpha_+$ & $\cA^\ast=\cA^\star$ & $\omega_{\cA}^\star$ & $\omega_{\cA}^-$ & $\omega_{\cA}^+$\\
	\hline
	$\alpha_-,\alpha_+\leq 1$ &\includegraphics[width=30mm]{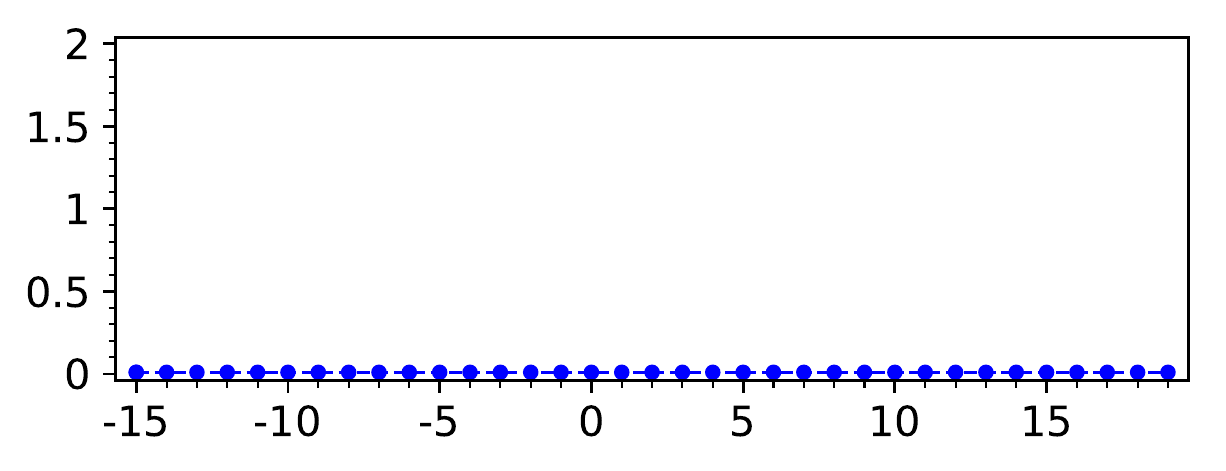} & $\set{0}$ & $\set{0}$ & $\set{0}$\\
	$\alpha_-\leq 1<\alpha_+$ &\includegraphics[width=30mm]{pba1} & $\set{0}$ & $\set{0}$ & $[0,\alpha_+-1]$\\
	$\alpha_+\leq 1<\alpha_-$ &\includegraphics[width=30mm]{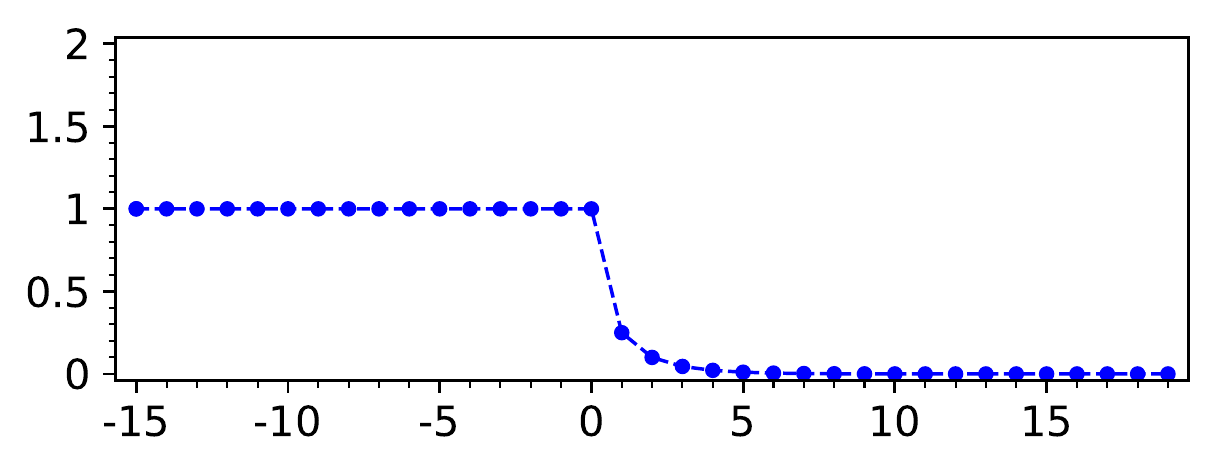} & $\set{0}$ & $\set{0}$ & $\set{0}$\\
	$1<\alpha_-<\alpha_+$ &\includegraphics[width=30mm]{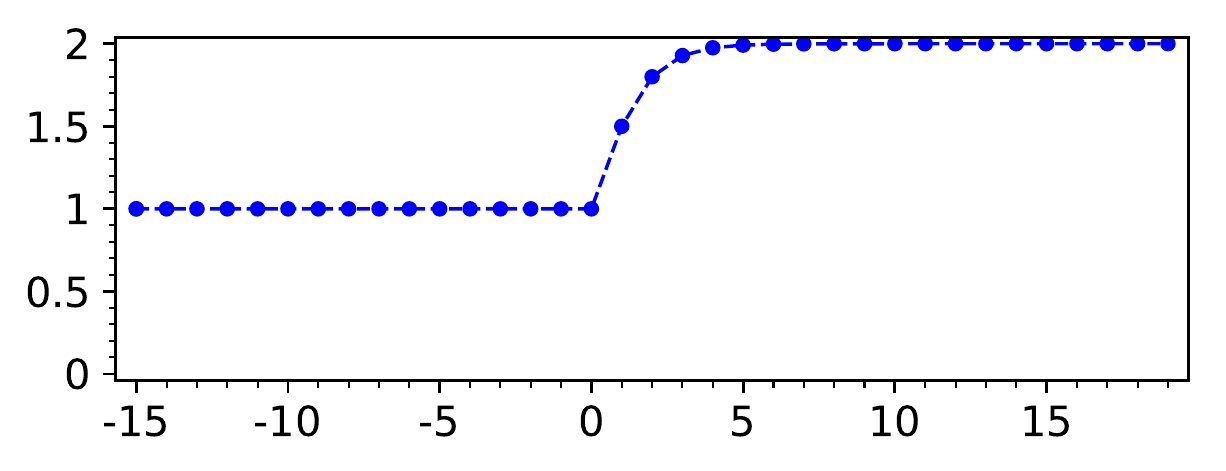} & $[0,\alpha_+-1]$ & $[0,\alpha_--1]$ &$[0,\alpha_+-1]$\\
	$1<\alpha_+\leq\alpha_-$ &\includegraphics[width=30mm]{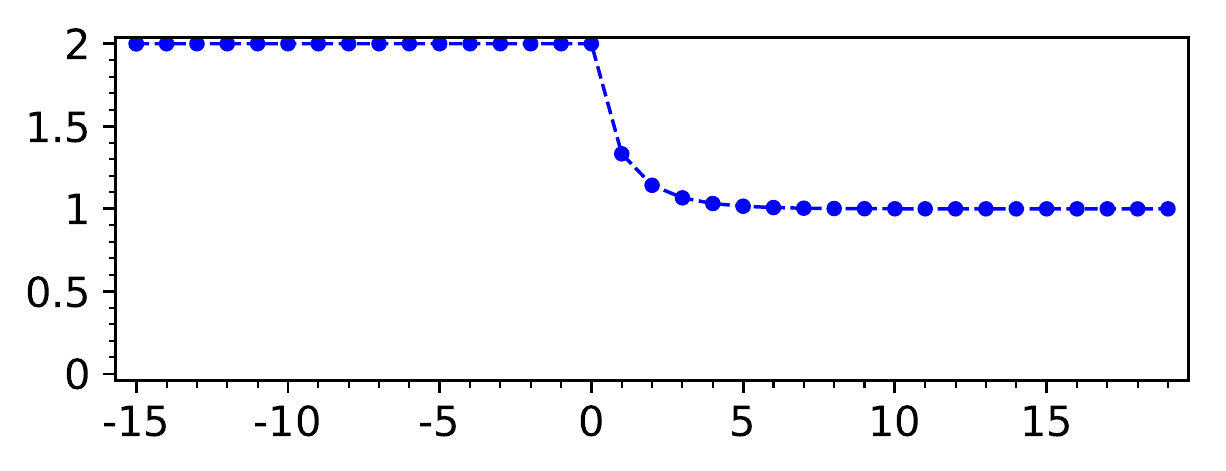} & $[0,\alpha_+-1]$ & $[0,\alpha_+-1]$ & $[0,\alpha_+-1]$\\
	\end{tabular}\\
	For $\alpha_+\leq 1$ all fibres $\Omega_\cA(\tau)=\set{0}$ are constant. For $\alpha_+>1$ two cases arise:
	\begin{itemize}
		\item $1\leq\alpha_-<\alpha_+$: Solutions starting in $\alpha_++1$ at time $\tau<0$ first decay until time $t=0$ and then increase again, which yields
		$$
			\Omega_{\cA}(\tau)
			=
			\begin{cases}
				[0,\vphi(0;\tau,\alpha_++1)],&\tau<0,\\
				[0,\alpha_+-1],&0\leq\tau. 
			\end{cases}
		$$

		\item $1\leq\alpha_+\leq\alpha_-$: Solutions starting in $\alpha_++1$ decay to $\alpha_+-1$ and thus the fibres are constant $\Omega_\cA(\tau)\equiv [0,\alpha_++1]$ on $\Z$. 
	\end{itemize}
	Except for $\alpha_-\leq 1<\alpha_+$, where the pullback and forward dynamics of \eqref{nobhx} differ, \cref{corfw} applies and yields that the pullback attractor $\cA^\ast=\cA^\star$ is the forward attractor $\cA^+$. 
\end{example}

As a conclusion, in case $\cA$ is a positively invariant, forward absorbing nonautonomous set this section provided two concepts to capture the forward dynamics of \eqref{deq}, namely the limit set $\omega_{\cA}^+$ from \tref{thm46} and the forward attractor $\cA^+=\cA^\star$ constructed in \cref{corfw}. On the one hand, the limit set $\omega_{\cA}^+\subseteq U$ is asymptotically positively invariant, forward attracts and is contained in all other sets with these properties. It depends only on information in forward time. On the other hand, the forward attractor $\cA^+\subseteq\cU$ shares these properties, but is actually invariant. Its construction is based on information on the entire axis $\Z$ and more restrictively, relies on the condition $\omega_{\cA}^+=\omega_{\cA}^\star$ from \tref{thm418}(b). The latter might be hard to verify in concrete examples, unless rather strict assumptions like asymptotic autonomy hold \cite{cui:kloeden:19}. 
\section{Integrodifference equations}
\label{sec5}
The above abstract results will now be applied to nonautonomous IDEs. For this purpose let $(\Omega,{\mathfrak A},\mu)$ be a measure space satisfying $\mu(\Omega)<\infty$. Suppose additionally that $\Omega$ is equipped with a metric such that it becomes a compact metric space. 

We consider the Banach space $X=\Cd$ of continuous $\R^d$-valued functions over $\Omega$ equipped with the norm
$
	\norm{u}_0:=\max_{x\in\Omega}\abs{u(x)}. 
$
If $Z\subseteq\R^d$ is a nonempty, closed set, then $U:=\bigl\{u:\Omega\to Z\mid u\in\Cd\bigr\}$ is a complete metric space. Furthermore, we have $\cU=\I\tm U$, where $\I$ is an unbounded discrete interval. 

Given functions $g_t:\Omega\tm Z\to\R^d$ and $k_t:\Omega^2\tm Z\to\R^d$, the \emph{Nemytskii operator} $\eG_t:U\to\Cd$ is defined by 
$$
	\eG_t(u)(x):=g_t(x,u(x))\fall(t,x)\in\I'\tm\Omega
$$
and the \emph{Urysohn integral operators} $\eK_t:U\to\Cd$ by 
$$
	\eK_t(u)(x):=\int_\Omega k_t(x,y,u(y))\d\mu(y)\fall(t,x)\in\I'\tm\Omega.
$$

With these operators, a nonautonomous difference equation \eqref{deq} of the additive form \eqref{rhsadd} is called an \emph{integrodifference equation} and explicitly reads as
\begin{equation}
	\tag{$I_g$}
	\boxed{
	u_{t+1}(x)
	=
	g_t(x,u_t(x))+\int_\Omega k_t(x,y,u_t(y))\d\mu(y)}
	\fall(t,x)\in\I'\tm\Omega.
	\label{ide}
\end{equation}
Such problems are well-motivated from applications: 
\begin{itemize}
	\item For an integrodifferential equation $D_1u(t,x)=\int_\Omega f(t,x,y,u(t,y))\d y$ with, e.g., a continuous kernel function $f:\R\tm\Omega^2\tm\R^d\to\R^d$, the forward Euler discretisation with step-size $h>0$ gives the IDE
	$$
		u_{t+1}(x)=u_t(x)+h\int_\Omega f(ht,x,y,u_t(y))\d y\fall(t,x)\in\I'\tm\Omega
	$$
	matching \eqref{ide} with a compact $\Omega\subset\R^\kappa$ and the Lebesgue measure $\mu$. 

	\item Population genetics or ecological models of the form
	$$
		u_{t+1}(x)=(1-\vartheta)g(x,u_t(x))+\vartheta\int_\Omega f(x,y,u_t(y))\d y
		\fall(t,x)\in\I'\tm\Omega
	$$
	are investigated in \cite{volkov:lui:07}, where $\vartheta\in[0,1]$ is a parameter and e.g.\ continuous functions $g:\Omega\tm\R^d\to\R^d$, $f:\Omega^2\tm\R^d\to\R^d$. These problems are of the from \eqref{ide} with a compact $\Omega\subset\R^\kappa$ and the Lebesgue measure $\mu$. 

	\item Let the compact set $\Omega\subset\R^\kappa$ be countable, $\eta\in\Omega$ and $w_\eta\geq 0$ be reals. Then $\mu(\Omega'):=\sum_{\eta\in\Omega'}w_\eta$ defines a measure on the family of all countable subsets $\Omega'\subset\R^\kappa$. The assumption $\sum_{\eta\in\Omega}w_\eta<\infty$ ensures that $\mu(\Omega)<\infty$. W.r.t.\ the resulting $\mu$-integral $\int_\Omega u\d\mu=\sum_{\eta\in\Omega}w_\eta u(\eta)$ the IDE \eqref{ide} becomes
	$$
		u_{t+1}(x)
		=
		g_t(x,u_t(x))
		+
		\sum_{\eta\in\Omega}w_\eta k_t(x,\eta,u_t(\eta))
		\fall(t,x)\in\I'\tm\Omega.
	$$
	Such difference equations occur as \emph{Nystr\"om methods} with \emph{nodes} $\eta$ and \emph{weights} $w_\eta$ as used in numerical discretizations and simulations \cite{atkinson:92} of IDEs \eqref{ide}. 
\end{itemize} 

\noindent
\textbf{Hypothesis}: 
For every $t\in\I'$ we suppose: 
\begin{itemize}
	\item[$(H_1)$] The function $g_t:\Omega\tm Z\to\R^d$ is such that $g_t(\cdot,z):\Omega\to\R^d$ is continuous and there exist reals $\gamma_t,\ell_t\geq 0$ with
	\begin{align*}
		\abs{g_t(x,z)}&\leq\gamma_t,&
		\abs{g_t(x,z)-g_t(x,\bar z)}&\leq\ell_t\abs{z-\bar z}\fall x\in\Omega,\,z,\bar z\in Z.
	\end{align*}

	\item[$(H_2)$] The kernel function $k_t:\Omega\tm\Omega\tm Z\to\R^d$ is such that $k_t(x,\cdot,z):\Omega\to\R^d$ is measurable for all $x\in\Omega$, $z\in Z$, and the following holds for almost all $y\in\Omega$: $k_t(x,y,\cdot):Z\to\R^d$ is continuous for all $x\in\Omega$ and the limit
	$$
		\lim_{x\to x_0}\int_\Omega\sup_{z\in Z\cap\bar B_r(0)}\abs{k_t(x,y,z)-k_t(x_0,y,z)}\d\mu(y)=0
		\fall r>0
	$$
	holds uniformly in $x_0\in\Omega$. 
	
	\item[$(H_3)$] There exists a function $\kappa_t:\Omega^2\to\R_+$, measurable in the second argument with $\sup_{x\in\Omega}\int_\Omega\kappa_t(x,y)\d\mu(y)<\infty$ and for almost all $y\in\Omega$ one has
	$$
		\abs{k_t(x,y,z)}\leq\kappa_t(x,y)\fall x\in\Omega,\,z\in Z. 
	$$
\end{itemize}
Then the Nemytskii operator $\eG_t:U\to C(\Omega,\R^d)$ satisfies
\begin{align}
	\norm{\eG_t(u)}_0&\leq\gamma_t,&
	\norm{\eG_t(u)-\eG_t(\bar u)}_0&\leq\ell_t\norm{u-\bar u}_0
	\fall u,\bar u\in U, 
	\label{noA}
\end{align}
while the Urysohn operators $\eK_t:U\to C(\Omega,\R^d)$ are globally bounded by 
\begin{align}
	\norm{\eK_t(u)}_0\leq\rho_t:=\sup_{x\in\Omega}\int_\Omega\kappa_t(x,y)\d\mu(y)\fall u\in U
	\label{noB}
\end{align}
and completely continuous due to \cite[p.~166, Prop.~3.2]{martin:76}. 

In the following we tacitly suppose $\eF_t(U)\subseteq U$ for all $t\in\I'$. 
\begin{proposition}[dissipativity for \eqref{ide}]\label{propdiss}
	If $(H_1$--$H_3)$ with
	\begin{align}
		\sup_{t\in\I'}\gamma_t&<\infty,&
		\sup_{t\in\I'}\rho_t&<\infty
		\label{nodiss1}
	\end{align}
	hold, then the bounded and closed set
	\begin{align*}
		\cA&:=\set{(t,u)\in\cU:\,\norm{u}_0\leq\gamma_{t^\ast}+\rho_{t^\ast}},&
		t^\ast&:=
		\begin{cases}
			t-1,&t>\min\I,\\
			t,&t=\min\I
		\end{cases}
	\end{align*}
	is positively invariant, forward absorbing (if $\I$ is unbounded above), pullback absorbing (if $\I$ is unbounded below) w.r.t.~\eqref{ide} with absorption time $1$. 
\end{proposition}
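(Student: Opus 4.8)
### Proof plan for Proposition (dissipativity for \eqref{ide})

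The plan is to verify the three claimed properties directly from the structure of the IDE \eqref{ide}, viewed as a difference equation of additive form \eqref{rhsadd} with $\eG_t$ the Nemytskii operator and $\eK_t$ the Urysohn operator. The key observation making everything work with absorption time $1$ is that $\eF_t(U)=\eG_t(U)+\eK_t(U)$ is \emph{globally} bounded: by the estimates \eqref{noA} and \eqref{noB} one has $\norm{\eF_t(u)}_0\leq\gamma_t+\rho_t$ for every $u\in U$, uniformly. So a single application of $\eF_t$ already lands inside a ball whose radius is controlled by $\gamma_t+\rho_t$, and the finiteness conditions \eqref{nodiss1} make these radii uniformly bounded.

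First I would check positive invariance. Take $(t,u)\in\cA$, so $t\in\I'$ and $\norm{u}_0\leq\gamma_{t^\ast}+\rho_{t^\ast}$ (in fact I only need $u\in U$ here). Then $\norm{\eF_t(u)}_0\leq\gamma_t+\rho_t$ by \eqref{noA}, \eqref{noB}. Since $t+1>\min\I$, the definition gives $(t+1)^\ast=t$, hence $\gamma_{(t+1)^\ast}+\rho_{(t+1)^\ast}=\gamma_t+\rho_t$, so $\eF_t(u)\in\cA(t+1)$. This shows $\eF_t(\cA(t))\subseteq\cA(t+1)$ for all $t\in\I'$, i.e.\ $\cA$ is positively invariant. (Boundedness and closedness of $\cA$ are immediate from \eqref{nodiss1} and the fact that closed balls in $X$ intersected with the closed set $U$ are closed.)

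Next, for forward absorption (assuming $\I$ unbounded above), let $\cB\subseteq\cU$ be a bounded nonautonomous set and $\tau\in\I$. For any $s\geq 1$ and any $u\in\cB(\tau)$, write $\vphi(\tau+s;\tau,u)=\eF_{\tau+s-1}\bigl(\vphi(\tau+s-1;\tau,u)\bigr)$; since $\vphi(\tau+s-1;\tau,u)\in U$, the global bound yields $\norm{\vphi(\tau+s;\tau,u)}_0\leq\gamma_{\tau+s-1}+\rho_{\tau+s-1}=\gamma_{(\tau+s)^\ast}+\rho_{(\tau+s)^\ast}$ (using $\tau+s>\min\I$), so $\vphi(\tau+s;\tau,\cB(\tau))\subseteq\cA(\tau+s)$ for all $s\geq 1$. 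Thus $S=1$ works, independently of $\tau$ and $\cB$, establishing \eqref{fordiss}. The pullback case (assuming $\I$ unbounded below) is completely symmetric: for $s\geq 1$ and $b\in\cB(\tau-s)$ one has $\vphi(\tau;\tau-s,b)=\eF_{\tau-1}\bigl(\vphi(\tau-1;\tau-s,b)\bigr)$ with $\vphi(\tau-1;\tau-s,b)\in U$, hence $\norm{\vphi(\tau;\tau-s,b)}_0\leq\gamma_{\tau-1}+\rho_{\tau-1}=\gamma_{\tau^\ast}+\rho_{\tau^\ast}$, so $\vphi(\tau;\tau-s,\cB(\tau-s))\subseteq\cA(\tau)$ for all $s\geq 1$, again with absorption time $1$.

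There is essentially no obstacle here: the whole statement reduces to the global boundedness of the right-hand side and bookkeeping with the index shift $t\mapsto t^\ast$. The only point demanding a moment's care is the edge case $t=\min\I$ in the pullback setting, where $t^\ast=t$; but pullback absorption is only asserted when $\I$ is unbounded below, so $\min\I$ does not exist and the edge case is vacuous there, while in the forward-absorbing and positive-invariance arguments the shift $(t+1)^\ast=t$ and $(\tau+s)^\ast=\tau+s-1$ always applies since the relevant time exceeds $\min\I$. Hence the radius $\gamma_{t^\ast}+\rho_{t^\ast}$ is tailored precisely so that one forward (resp.\ pullback) step of the equation maps $U$ into the fibre $\cA(t)$, and the proof is just the verification I sketched above.
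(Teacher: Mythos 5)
Your proposal is correct and follows essentially the same route as the paper: the single estimate $\norm{\eF_t(u)}_0\leq\gamma_t+\rho_t$ from \eqref{noA}, \eqref{noB} (global boundedness of the right-hand side) shows that one application of the equation maps all of $U$ into the fibre $\cA(t+1)$, which simultaneously yields positive invariance and forward/pullback absorption with absorption time $1$. The only cosmetic difference is that the paper packages the three verifications into one estimate for $\vphi(t;\tau,u)$ with $\tau<t$, whereas you spell them out separately; the index bookkeeping with $t^\ast$ is handled identically.
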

\begin{proof}
	Clearly the set $\cA$ is closed and due to \eqref{nodiss1} also bounded. Let $t,\tau\in\I$ with $\tau<t$. Thus, $t^\ast=t-1$ and our assumptions readily imply that
	\begin{eqnarray*}
		\norm{\vphi(t;\tau,u)}
		& \stackrel{\eqref{process}}{=} &
		\norm{\eF_{t-1}(\vphi(t-1;\tau,u))}\\
		&\stackrel{\eqref{rhsadd}}{\leq} &
		\norm{\eG_{t-1}(\vphi(t-1;\tau,u))}+\norm{\eK_{t-1}(\vphi(t-1;\tau,u))}\\
		& \stackrel{\eqref{noA}}{\leq} &
		\gamma_{t-1}+\norm{\eK_{t-1}(\vphi(t-1;\tau,u))}
		\stackrel{\eqref{noB}}{\leq}
		\gamma_{t^\ast}+\rho_{t^\ast}\fall(\tau,u)\in\cU
	\end{eqnarray*}
	and consequently $\vphi(t,\tau,\cU(\tau))\subseteq\cA(t)$ holds for all $\tau<t$. Thanks to $\cA,\cB\subseteq\cU$ for any bounded $\cB$ this inclusion guarantees that $\cA$ is positively invariant, but also forward and uniformly absorbing with absorption time $S=1$. 
	\qed
\end{proof}

\begin{theorem}[pullback attractor for \eqref{ide}]\label{thm52}
	Let $\I$ be unbounded below. If $(H_1$--$H_3)$ are satisfied with \eqref{nodiss1} and there exists a $T\in\I$ such that $\prod_{s=-\infty}^{T-1}\ell_s=0$ hold, then the IDE \eqref{ide} has a unique and bounded pullback attractor $\cA^\ast\subseteq\cA$. 
\end{theorem}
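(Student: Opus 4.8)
The plan is to obtain the claim as a direct application of \tref{thmpullback}, verifying its three hypotheses in turn. First, \eqref{ide} is of additive form \eqref{rhsadd}: here $\eG_t$ is the Nemytskii operator, which by $(H_1)$ and \eqref{noA} is bounded (by $\gamma_t$) and Lipschitz (with constant $\ell_t$), hence continuous; and $\eK_t$ is the Urysohn operator, which by $(H_2)$, $(H_3)$ and \eqref{noB} is globally bounded (by $\rho_t$) and completely continuous via \cite[p.~166, Prop.~3.2]{martin:76}. Thus the structural assumption underlying \lref{lem22} and \tref{thmpullback} is met.

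Second, I would invoke \pref{propdiss}: assumption \eqref{nodiss1} makes the set $\cA$ bounded and closed, and \pref{propdiss} then shows that $\cA$ is positively invariant and pullback absorbing with absorption time $S=1$. Since $S$ does not depend on $\tau$, equation \eqref{ide} is \emph{uniformly} pullback dissipative, precisely as required by \tref{thmpullback}.

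Third, I would translate the standing hypothesis $\prod_{s=-\infty}^{T-1}\ell_s=0$ into the Darbo condition of \tref{thmpullback}. Because a Lipschitz constant is an upper bound for the Darbo constant (\cite[p.~81, Prop.~5.3]{martin:76}), one has $\dar{\eG_s}\leq\ell_s$ for every $s\in\I'$; hence the partial products $\prod_{s=n}^{T-1}\dar{\eG_s}$ are dominated by $\prod_{s=n}^{T-1}\ell_s$, and letting $n\to-\infty$ yields $\prod_{s=-\infty}^{T-1}\dar{\eG_s}=0$. Now \tref{thmpullback} delivers a unique bounded pullback attractor $\cA^\ast$; as noted at the end of the proof of \tref{thmpullback}, $\cA^\ast$ lies in the closure of the absorbing set, which here equals the already closed set $\cA$, so $\cA^\ast\subseteq\cA$ follows. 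The argument is essentially bookkeeping; the only genuinely delicate points are the complete continuity of the Urysohn operators $\eK_t$ (which rests on $(H_2)$--$(H_3)$ through Martin's compactness criterion) and the passage from the product condition on $\ell_s$ to that on the Darbo constants $\dar{\eG_s}$.
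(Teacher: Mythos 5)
Your proposal is correct and follows essentially the same route as the paper: apply \tref{thmpullback}, using \pref{propdiss} for uniform pullback dissipativity and the bound $\dar{\eG_s}\leq\ell_s$ to convert the hypothesis $\prod_{s=-\infty}^{T-1}\ell_s=0$ into the Darbo product condition, with complete continuity of $\eK_t$ from $(H_2)$--$(H_3)$. The additional remark that $\cA^\ast\subseteq\cA$ because the attractor lies in the closure of the (already closed) absorbing set is a correct and slightly more explicit justification of the inclusion stated in the theorem.
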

\begin{proof} 
	We aim to apply \tref{thmpullback} to \eqref{ide}. Thereto, \pref{propdiss} guarantees that \eqref{ide} is uniformly pullback absorbing. Moreover, since the Lipschitz constant $\ell_t$ of the Nemytskii operator $\eG_t$ is an upper bound for its Darbo constant and because $\eK_t$ is completely continuous, the assertion follows. 
	\qed
\end{proof}
Without further assumptions not much can be said about the detailed structure of the pullback attractor $\cA^\ast$. Nevertheless, in case the functions $g_t,k_t$ satisfy monotonicity assumptions in the second resp.\ third argument, it is possible to construct ``extremal'' solutions in the attractor \cite{poetzsche:15}. We illustrate this in
\begin{figure}
	\includegraphics[width=60mm]{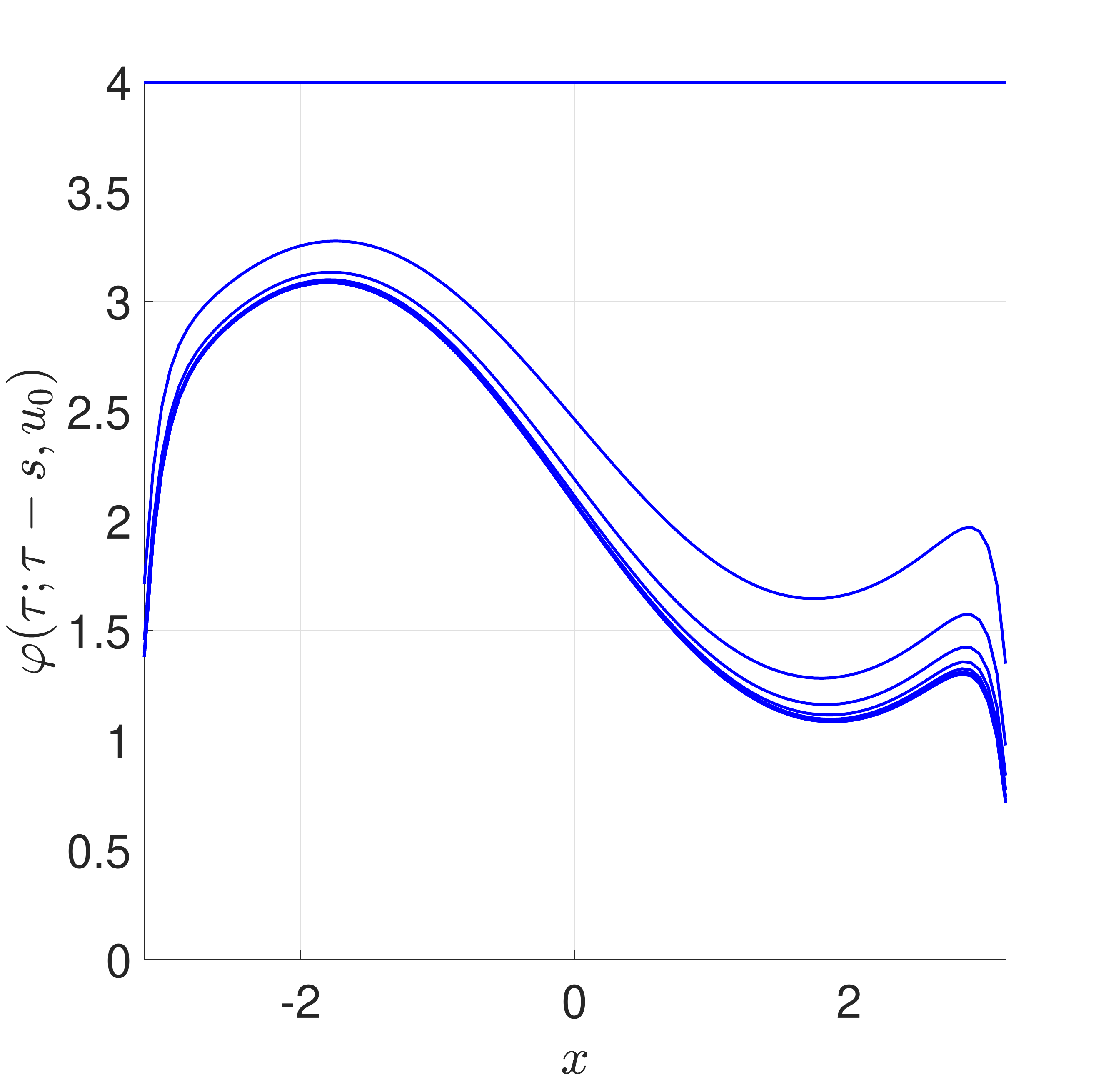}
	\includegraphics[width=60mm]{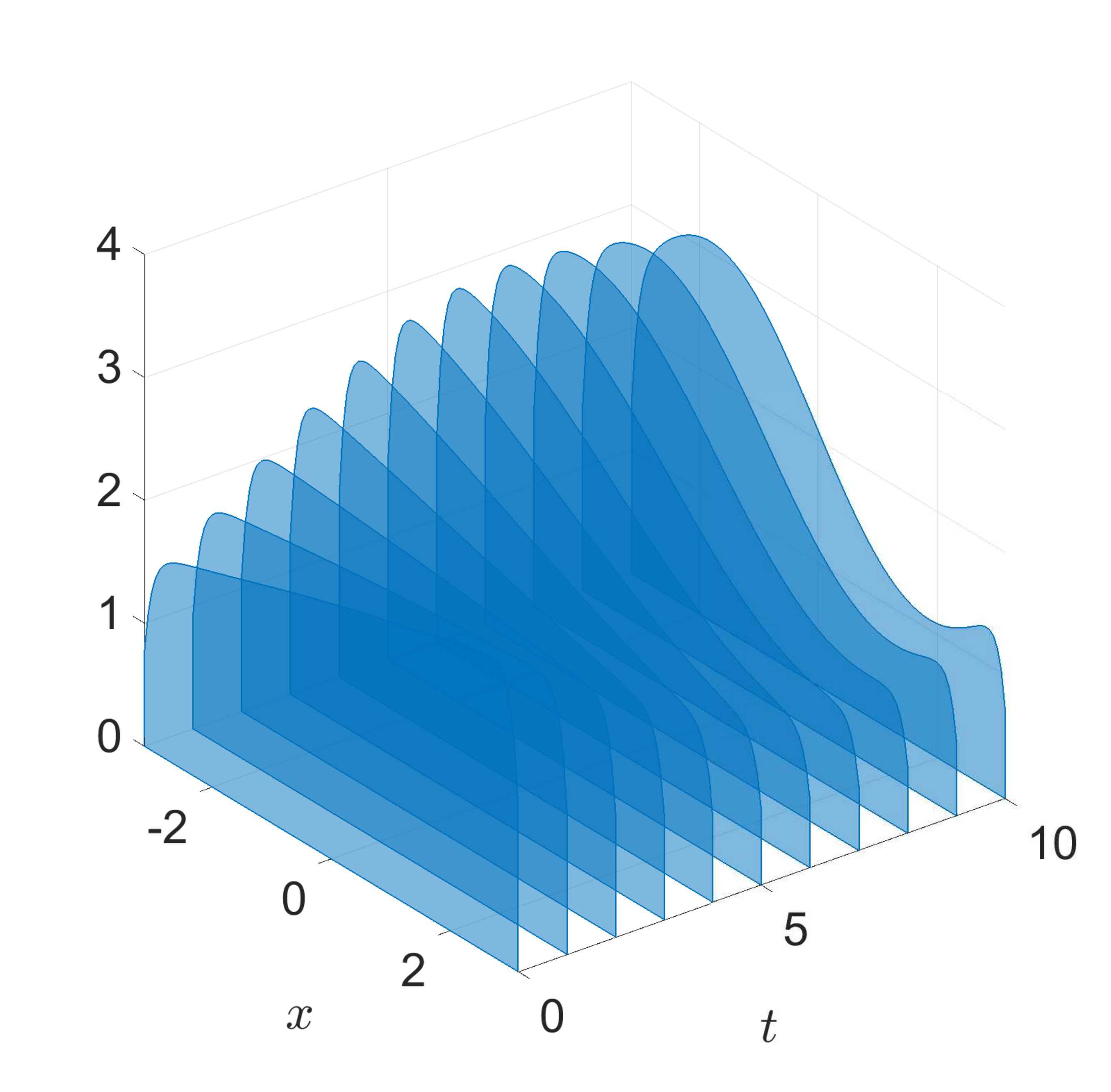}
	\caption{Pullback convergence to the fibre $\phi_\tau^+:\Omega\to\R_+$ ($\tau=10$, initial function $u_0(x)\equiv 4$, left) and sequence of sets containing the pullback attractor $\cA^\ast$ ($0\leq t\leq 10$, right) for $\vartheta=\tfrac{1}{4}$.}
	\label{figpullback1}
\end{figure}
\begin{example}[spatial Beverton-Holt equation]
	Let $\vartheta\in[0,1]$ and $a_t:\Omega\to(0,\infty)$, $t\in\I'$, be continuous functions describing the space- and time-dependent growth rates and a compact habitat $\Omega$. The spatial Beverton-Holt equation
	\begin{equation}
		u_{t+1}(x)
		=
		(1-\vartheta)
		\frac{a_t(x)u_t(x)}{1+u_t(x)}
		+
		\vartheta
		\int_\Omega k(x,y)\frac{a_t(y)u_t(y)}{1+u_t(y)}\d\mu(y)
		\label{exbh01}
	\end{equation}
	for all $(t,x)\in\I'\tm\Omega$ fits into the framework of \eqref{ide} with $Z=\R_+$, $U=C(\Omega,\R_+)$, 
	\begin{align*}
		g_t:\Omega\tm\R_+&\to\R_+,&g_t(x,z)&:=\frac{a_t(x)z}{1+z},\\
		k_t:\Omega\tm\Omega\tm\R_+&\to\R_+,&k_t(x,y,z)&:=k(x,y)\frac{a_t(y)z}{1+z}
	\end{align*}
	and a continuous kernel function $k:\Omega\tm\Omega\to(0,\infty)$. Then $(H_1$--$H_3)$ hold with 
	\begin{align*}
		\gamma_t&=(1-\vartheta)\alpha_t,&
		\ell_t&=(1-\vartheta)\alpha_t,&
		\kappa_t(x,y)&=\vartheta\alpha_tk(x,y)\fall x,y\in\Omega
	\end{align*}
	and $\alpha_t:=\max_{x\in\Omega}a_t(x)$. If $\lim_{t\to-\infty}(1-\vartheta)^{T-t}\prod_{s=t}^{T-1}\alpha_s=0$ holds for some $T\in\I$, then \tref{thm52} yields the existence of a pullback attractor $\cA^\ast\subseteq\cU$ for \eqref{exbh01}. Since the functions $g_t(x,\cdot),k_t(x,y,\cdot):\R_+\to\R_+$ are strictly increasing more can be said on the structure of $\cA^\ast$. As in \cite[Prop.~8]{poetzsche:15}, there exists an ``extremal'' entire solution $\phi^+$ being pullback attracting from above such that
	$$
		\cA^\ast\subseteq\set{(t,u)\in\cU:\,0\leq u(x)\leq\phi_t^+(x)\text{ for all }x\in\Omega}. 
	$$
	We illustrate both the pullback convergence to the solution $\phi^+$, as well as the sets containing solutions in the pullback attractor $\cA^\ast$ in \fref{figpullback1}, where $\Omega=[-\pi,\pi]$ is equipped with the $1$-dimensional Lebesgue measure, $a_t(x):=3-\sin\tfrac{tx}{10}$ (artificial) and the Laplace kernel $k(x,y):=\tfrac{a}{2}e^{-a\abs{x-y}}$ for the dispersal rate $a=10$. 
\end{example}

The remaining section addresses forward attraction. For simplicity we restrict to the class of Urysohn IDEs
\begin{equation}
	\tag{$I_0$}
	\boxed{u_{t+1}(x)
	=
	\int_\Omega k_t(x,y,u_t(y))\d\mu(y)}
	\fall(t,x)\in\I'\tm\Omega.
	\label{ide0}
\end{equation}

\noindent
\textbf{Hypothesis}: 
For every $t\in\I'$ we suppose: 
\begin{itemize}
	\item[$(H_4)$] For all $r>0$ there exists a function $\lambda_t:\Omega^2\to\R_+$, measurable in the second argument with $\sup_{x\in\Omega}\int_\Omega\lambda_t(x,y)\d\mu(y)<\infty$ and for almost all $y\in\Omega$ one has
	$$
		\abs{k_t(x,y,z)-k_t(x,y,z)}\leq\lambda_t(x,y)\abs{z-\bar z}
		\fall x\in\Omega,\,z,\bar z\in Z\cap\bar B_r(0). 
	$$
\end{itemize}

\begin{proposition}[dissipativity for \eqref{ide0}]\label{prop54}
	If $(H_2$--$H_3)$ with $R:=\sup_{t\in\I}\rho_t<\infty$ hold, then the bounded and compact nonautonomous set
	\begin{align*}
		\cA&:=\bigl\{(t,u)\in\cU:\,u\in\overline{\eK_{t^\ast}(U\cap B_R(0))}\bigr\},&
		t^\ast&:=
		\begin{cases}
			t-1,&t>\min\I,\\
			t,&t=\min\I
		\end{cases}
	\end{align*}
	is positively invariant, forward absorbing w.r.t.\ \eqref{ide0} with absorption time $2$. 
\end{proposition}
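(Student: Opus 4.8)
The plan is to verify, in this order, that the nonautonomous set $\cA$ is compact and bounded, that it is positively invariant with respect to \eqref{ide0}, and that it is forward absorbing with absorption time $2$. The only inputs needed are the two properties of the Urysohn operators recorded above — global boundedness $\norm{\eK_s(u)}_0\le\rho_s\le R$ for all $u\in U$, $s\in\I'$ (so $\eK_s$ maps $U$ into $U\cap\bar B_R(0)$), complete continuity of each $\eK_s$, and the standing assumption $\eK_s(U)\subseteq U$ — together with the index convention $(t+1)^\ast=t$, valid since $t+1>\min\I$ whenever $t\in\I'$.

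\emph{Compactness and boundedness.} For $t\in\I$ the fibre $\cA(t)=\overline{\eK_{t^\ast}(U\cap B_R(0))}$ is the closure of the image of the bounded set $U\cap B_R(0)$ under the completely continuous operator $\eK_{t^\ast}$; hence $\chi\bigl(\eK_{t^\ast}(U\cap B_R(0))\bigr)=0$ and $\cA(t)$ is compact in $X$. Since $\eK_{t^\ast}(U)\subseteq U$ and $U$ is closed we have $\cA(t)\subseteq U$, so $\cA\subseteq\cU$ is a compact nonautonomous set, and global boundedness yields the uniform inclusion $\cA(t)\subseteq\bar B_R(0)$ for all $t\in\I$, so $\cA$ is bounded.

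\emph{Positive invariance.} Fix $t\in\I'$, so that $\cA(t+1)=\overline{\eK_t(U\cap B_R(0))}$. Every $u\in\cA(t)$ lies in $U$ and satisfies $\norm{u}_0\le\rho_{t^\ast}\le R$, i.e.\ $\cA(t)\subseteq U\cap\bar B_R(0)$. As $\eK_t$ is continuous and $\cA(t)$ is compact, $\eK_t\bigl(\cA(t)\bigr)=\overline{\eK_t\bigl(\eK_{t^\ast}(U\cap B_R(0))\bigr)}$, and since $\eK_{t^\ast}(U\cap B_R(0))\subseteq U\cap\bar B_R(0)$ this set is contained in $\overline{\eK_t(U\cap B_R(0))}=\cA(t+1)$; the case $t=\min\I$, where $\cA(\min\I)=\cA(\min\I+1)$, is treated identically. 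This is the delicate step: an element of $\cA(t)$ need not itself be an image of a point of $U\cap B_R(0)$, so the inclusion must be obtained through continuity and the closure rather than through a naive set inclusion, and the passage from the closed ball $\bar B_R(0)$ back to $B_R(0)$ in the definition of $\cA$ is dispatched harmlessly by the uniform bound $\rho_s\le R$ (equivalently, one may simply read the ball in the definition of $\cA$ as closed, which changes none of the statements).

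\emph{Forward absorption with absorption time $2$.} Let $\cB\subseteq\cU$ be bounded and $\tau\in\I$. Then $\cB(\tau)$ is a bounded subset of $U$, and global boundedness gives $\vphi(\tau+1;\tau,\cB(\tau))=\eK_\tau(\cB(\tau))\subseteq U\cap\bar B_R(0)$; the first iterate only forces entry into the $R$-ball — not into $\cA$ — because $\cB(\tau)$ may contain functions of norm exceeding $R$. Applying $\eK_{\tau+1}$ once more and using $(\tau+2)^\ast=\tau+1$,
$$
	\vphi(\tau+2;\tau,\cB(\tau))
	\subseteq\eK_{\tau+1}\bigl(U\cap B_R(0)\bigr)
	\subseteq\overline{\eK_{\tau+1}(U\cap B_R(0))}
	=\cA(\tau+2),
$$
and for $s\ge 3$ the process property \eqref{semigroup} together with the positive invariance already established gives
$$
	\vphi(\tau+s;\tau,\cB(\tau))
	=\vphi\bigl(\tau+s;\tau+2,\vphi(\tau+2;\tau,\cB(\tau))\bigr)
	\subseteq\vphi\bigl(\tau+s;\tau+2,\cA(\tau+2)\bigr)
	\subseteq\cA(\tau+s).
$$
Hence $\vphi(\tau+s;\tau,\cB(\tau))\subseteq\cA(\tau+s)$ for all $s\ge 2$, which is precisely the claim (and explains why the absorption time is $2$ and not $1$). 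Apart from the bookkeeping flagged above, everything here is a routine combination of global boundedness, complete continuity, and the $t^\ast$-convention.
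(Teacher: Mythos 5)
Your proof is correct and follows essentially the same route as the paper: compactness of the fibres from complete continuity of $\eK_{t^\ast}$, boundedness and positive invariance from the uniform bound $\rho_t\leq R$ in \eqref{noB}, and absorption with time $2$ because the first iterate lands in the $R$-ball and the second in an image of it. The only difference is cosmetic: you treat the closure/boundary-of-ball bookkeeping (which the paper silently glosses over by writing $\eK_t(\cA(t))\subseteq\eK_t(U\cap B_R(0))$) explicitly, and you invoke positive invariance for $s\geq 3$ where the paper argues directly via \eqref{process} for all $t-\tau\geq 2$; both variants are fine.
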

\begin{proof}
	Let $t\in\I'$ with $t-1\in\I'$ and thus $t^\ast=t-1$. Since the Urysohn operators $\eK_{t-1}$ are completely continuous, the fibres $\cA(t)=\overline{\eK_{t-1}(U\cap B_R(0))}$ are compact. Thanks to 
	$$
		\norm{\eK_t(u)}
		\stackrel{\eqref{noB}}{\leq}
		\rho_t\leq R\fall u\in U
	$$
	it follows that $\cA$ is bounded. Moreover, $\eK_t(\cA(t))\subseteq\eK_t(U\cap B_R(0))=\cA(t+1)$ holds for all $t\in\I'$ and $\cA$ is positively invariant. Furthermore, from the inclusion
	$$
		\vphi(t;\tau,u)
		\stackrel{\eqref{process}}{=}
		\eK_{t-1}(\underbrace{\vphi(t-1;\tau,u)}_{\in U\cap B_R(0)})
		\in
		\cA(t)\fall t-\tau\geq 2,\,u\in U
	$$
	we deduce that $\cA$ is absorbing. 
	\qed
\end{proof}

\begin{theorem}[forward limit set for \eqref{ide0}]
	Suppose that $(H_2$--$H_3)$ hold with additionally $R:=\sup_{t\in\I}\rho_t<\infty$. If $\bigcup_{t\in\I}\eK_t(U\cap B_R(0))$ is relatively compact and $\cA$ is the forward absorbing set from \pref{prop54}, then the following are true:
	\begin{enumerate}
		\item $\omega_{\cA}^+$ is asymptotically positively invariant, 

		\item $\omega_{\cA}^+$ is asymptotically negatively invariant, provided $(H_4)$ is satisfied with
		\begin{equation}
			\sup_{\tau\leq t<\tau+T}\prod_{s=\tau}^{t-1}
			\int_\Omega\lambda_s(x,y)\d\mu(y)<\infty
			\fall\tau\in\I,\,T\in\N.
			\label{thm55a}
		\end{equation}
	\end{enumerate}
\end{theorem}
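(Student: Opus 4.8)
The strategy is to verify that the IDE \eqref{ide0}, together with the forward absorbing set $\cA$ from \pref{prop54}, meets the hypotheses of \tref{thm49} (for part (a)) and of \tref{thm410} (for part (b)). Once these are checked, the two assertions are immediate applications of those theorems to $\ocA$. The common preliminary step for both parts is to establish that \eqref{ide0} is $\cA$-asymptotically compact and in fact strongly $\cA$-asymptotically compact, with compact set $K:=\overline{\bigcup_{t\in\I}\eK_t(U\cap B_R(0))}$, which is compact by the relative compactness hypothesis. Indeed, for $\tau\in\I$ and $s\geq 2$ the process property gives $\vphi(\tau+s;\tau,\cA(\tau))=\eK_{\tau+s-1}(\vphi(\tau+s-1;\tau,\cA(\tau)))$, and since $\cA$ is forward absorbing with absorption time $2$ we have $\vphi(\tau+s-1;\tau,\cA(\tau))\subseteq\cA(\tau+s-1)\subseteq U\cap B_R(0)$ for $s\geq 3$; hence $\vphi(\tau+s;\tau,\cA(\tau))\subseteq\eK_{\tau+s-1}(U\cap B_R(0))\subseteq K$ for all large $s$, uniformly in $\tau$. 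This yields $\dist{\vphi(\tau+s;\tau,\cA(\tau)),K}=0$ eventually, which is far stronger than either asymptotic compactness notion; in particular strong $\cA$-asymptotic compactness holds.

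\textbf{Part (a).} By \tref{thm46}, $\ocA$ is nonempty and compact. To invoke \tref{thm49} it remains to verify that for every sequence $\intoo{(s_n,\tau_n)}_{n\in\N}$ in $\N_0\tm\I$ with $\tau_n\to\infty$ one has $\lim_{n\to\infty}\dist{\vphi(\tau_n+s_n;\tau_n,K),K}=0$. For $s_n=0$ this reduces to $\dist{K,K}=0$; for $s_n=1$ one has $\vphi(\tau_n+1;\tau_n,K)=\eK_{\tau_n}(K)\subseteq\eK_{\tau_n}(U\cap B_R(0))\subseteq K$, using $K\subseteq\overline{U\cap B_R(0)}$ (every point of $K$ has norm at most $R$, and $K\subseteq U$ since $U$ is closed and the images $\eK_t(U\cap B_R(0))$ lie in $U$ by the standing assumption $\eF_t(U)\subseteq U$); and for $s_n\geq 2$ the same telescoping as above gives $\vphi(\tau_n+s_n;\tau_n,K)\subseteq\eK_{\tau_n+s_n-1}(U\cap B_R(0))\subseteq K$. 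Thus $\dist{\vphi(\tau_n+s_n;\tau_n,K),K}=0$ for all $n$, the hypothesis of \tref{thm49} is met, and $\ocA$ is asymptotically positively invariant.

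\textbf{Part (b).} Here one applies \tref{thm410}, whose hypotheses are strong $\cA$-asymptotic compactness (already established) together with the uniform continuity condition \eqref{thm47a}: for every $\eps>0$ and $T\in\N$ there must exist $\delta>0$ so that $u_0,v_0\in\cA(\tau)\cup K$ with $d(u_0,v_0)<\delta$ forces $\sup_{0\leq s\leq T}\norm{\vphi(\tau+s;\tau,u_0)-\vphi(\tau+s;\tau,v_0)}_0<\eps$ for all $\tau\in\I$. This is the part where the Lipschitz hypothesis $(H_4)$ and the bound \eqref{thm55a} are used. Fix $r:=R$; then $\cA(\tau)\cup K\subseteq U\cap\bar B_R(0)$. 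From $(H_4)$ and \eqref{noB}-type estimates one gets, for $u,\bar u\in U\cap\bar B_R(0)$,
\begin{equation*}
	\norm{\eK_s(u)-\eK_s(\bar u)}_0
	\leq
	\Bigl(\sup_{x\in\Omega}\int_\Omega\lambda_s(x,y)\d\mu(y)\Bigr)\norm{u-\bar u}_0,
\end{equation*}
and iterating along the cocycle (staying inside $U\cap B_R(0)$ thanks to \eqref{noB}) yields
\begin{equation*}
	\norm{\vphi(\tau+s;\tau,u_0)-\vphi(\tau+s;\tau,v_0)}_0
	\leq
	\Bigl(\prod_{r=\tau}^{\tau+s-1}\sup_{x\in\Omega}\int_\Omega\lambda_r(x,y)\d\mu(y)\Bigr)\norm{u_0-v_0}_0
\end{equation*}
for $0\leq s\leq T$. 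By \eqref{thm55a} the product over any window of length at most $T$ is bounded by a constant $C(\tau,T)$ — and one needs this bounded uniformly in $\tau$, which is why \eqref{thm55a} is stated with the supremum over the window but the condition must be read as giving a $\tau$-uniform bound (or one strengthens the hypothesis accordingly); then $\delta:=\eps/(C+1)$ works. With \eqref{thm47a} verified, \tref{thm410} applies and $\ocA$ is asymptotically negatively invariant.

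\textbf{Main obstacle.} The routine parts — relative compactness giving honest finite-time absorption into a compact $K$, and the Gr\"onwall-type Lipschitz iteration — are straightforward. The delicate point is the \emph{uniformity in $\tau$} demanded by \eqref{thm47a}: the displayed product bound is over the window $[\tau,\tau+T)$, and \tref{thm410} requires a single $\delta=\delta(\eps,T)$ valid for all $\tau\in\I$ simultaneously. Condition \eqref{thm55a} as written furnishes, for each fixed $\tau$, a finite bound; extracting a $\tau$-uniform bound requires interpreting \eqref{thm55a} as a uniform-in-$\tau$ statement (e.g. $\sup_{\tau\in\I}\sup_{\tau\leq t<\tau+T}\prod_{s=\tau}^{t-1}\int_\Omega\lambda_s(x,y)\d\mu(y)<\infty$), which is the natural reading given the role it plays. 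That bookkeeping — matching the precise quantifier structure of \eqref{thm47a} to the hypothesis — is where care is needed; everything else follows mechanically from \tref{thm49} and \tref{thm410}.
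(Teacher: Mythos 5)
Your proposal is correct and follows essentially the same route as the paper: take $K:=\overline{\bigcup_{t\in\I}\eK_t(U\cap B_R(0))}$, deduce (strong) $\cA$-asymptotic compactness, obtain (a) from \tref{thm49}, and obtain (b) by showing via $(H_4)$ that $\sup_{x\in\Omega}\int_\Omega\lambda_s(x,y)\d\mu(y)$ is a Lipschitz constant for $\eK_s$ on $\cA(s)\cup K$, so that \eqref{thm55a} yields \eqref{thm47a} and \tref{thm410} applies. You in fact supply more detail than the paper (the explicit check of the hypothesis of \tref{thm49} and the iteration of the Lipschitz bound), and your remark that \eqref{thm55a} must be read as furnishing a bound uniform in $\tau$ to match the quantifier structure of \eqref{thm47a} is exactly the intended reading.
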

The relative compactness of the union $\bigcup_{t\in\I}\eK_t(U\cap B_R(0))$ holds for instance, if the kernel functions $k_t$ stem from a finite set or the images $\eK_t(U\cap B_R(0))$ form a nonincreasing/nondecreasing sequence of sets. 
\begin{proof} %\cite[Thm.~2.2]{kloeden:16}
	By assumption the set $K:=\overline{\bigcup_{t\in\I}\eK_t(U\cap B_R(0))}$ is compact and this implies that \eqref{ide0} is strongly $\cA$-asymptotically compact. 
	
	(a) By construction of $K\subseteq U$ the assertion results from \tref{thm49}. 

	(b) Let $u,\bar u\in\cA(s)\cup K$ and choose $r>0$ so large that $\cA(s)\cup K\subseteq B_r(0)$. We conclude
	\begin{align*}
		\abs{\eK_s(u)(x)-\eK_s(\bar u)(x)}
		&\leq
		\int_\Omega\abs{k_s(x,y,u(y))-k_s(x,y,\bar u(y))\d\mu(y)}\\
		&\leq
		\int_\Omega\lambda_s(x,y)\d\mu(y)\norm{u-\bar u}_0\fall(s,x)\in\I\tm\Omega
	\end{align*}
	from assumption $(H_4)$. After passing to the least upper bound over $x\in\Omega$ it follows that $\sup_{x\in\Omega}\int_\Omega\lambda_s(x,y)\d\mu(y)$ is a Lipschitz constant for $\eK_s$ on $\cA(s)\cup K$. Hence, the assumption \eqref{thm55a} implies \eqref{thm47a} and therefore \tref{thm410} yields the claim. 
	\qed
\end{proof}

The above results do apply to the following
\begin{example}[spatial Ricker equation]
	Suppose that the compact $\Omega\subseteq\R^\kappa$ is equipped with the $\kappa$-dimensional Lebesgue measure $\mu$ and that $\mu(\Omega)>0$ holds. Let $(\alpha_t)_{t\in\I'}$ denote a bounded sequence of positive reals, $k:\Omega\tm\Omega\to\R_+$ be continuous and $(b_t)_{t\in\I'}$ be a bounded sequence in $C(\Omega,\R_+)$, $t\in\I'$. The spatial Ricker equation
	\begin{equation}
		u_{t+1}(x)
		=
		\alpha_t
		\int_\Omega k(x,y)u_t(y)e^{-u_t(y)}\d y+b_t(x)
		\fall(t,x)\in\I'\tm\Omega
		\label{exrick1}
	\end{equation}
	fits in the framework of \eqref{ide0} with $Z=\R_+$ and the kernel function
	$$
		k_t(x,y,z):=\alpha_tk(x,y)ze^{-z}+\tfrac{b_t(x)}{\mu(\Omega)}\fall x,y\in\Omega,\,z\in\R_+;
	$$
	hence, \eqref{exrick1} is defined on the cone $U:=C(\Omega,\R_+)$. If $\I$ is unbounded below, then \eqref{exrick1} possesses a pullback attractor $\cA^\ast\subseteq\cU$; see \fref{figpullback2} for an illustration. 
	
	For our subsequent analysis it is convenient to set $\gamma:=\sup_{x\in\Omega}\int_\Omega k(x,y)\d y$. We begin with some preparatory estimates. Above all, \eqref{exrick1} satisfies the assumption $(H_4)$ with $\lambda_t(x,y)=\alpha_tk(x,y)$, which guarantees the global Lipschitz condition
	\begin{equation}
		\norm{\eK_t(u)-\eK_t(\bar u)}
		\leq
		\alpha_t\gamma\norm{u-\bar u}\fall u,\bar u\in U.
		\label{exrick3}
	\end{equation}
	If we represent the right-hand side of \eqref{exrick1} in semilinear form \eqref{rhssemi} with
	\begin{align*}
		\eL_t u&:=\alpha_t\int_\Omega k(\cdot,y)u(y)\d y,&
		\eN_t(u)&:=\alpha_t\int_\Omega k(\cdot,y)(e^{-u(y)}-1)u(y)\d y+b_t, 
	\end{align*}
	then $\norm{\eL_t}=\alpha_t\gamma$ holds, as well as the global Lipschitz condition
	\begin{equation}
		\norm{\eN_t(u)-\eN_t(\bar u)}_0
		\leq
		\alpha_t(1+\tfrac{1}{e^2})\gamma\norm{u-\bar u}_0
		\fall u,\bar u\in U.
		\label{exrick5}
	\end{equation}
	
	In order to obtain information on the forward attractor, we suppose that $\I=\Z$ and that \eqref{exrick1} is asymptotically autonomous in forward time, i.e.,\ there exist $\alpha_+>0$ and $b\in C(\Omega,\R_+)$ such that
	\begin{align*}
		\lim_{t\to\infty}\alpha_t&=\alpha_+\in[0,1),&
		\lim_{t\to\infty}b_t&=b. 
	\end{align*}
	If $\alpha_+\gamma<1$, then it follows from the contraction mapping principle and \eqref{exrick3} that the autonomous limit equation
	\begin{equation}
		u_{t+1}(x)
		=
		\alpha_+
		\int_\Omega k(x,y)u_t(y)e^{-u_t(y)}\d y+b(x)
		\fall(t,x)\in\I'\tm\Omega
		\label{exrick7}
	\end{equation}
	has a unique, globally attractive fixed-point $u^\ast\in U$. 
	
	We choose $\I=\N_0$ and an absorbing set $A\subseteq U$ of the limit equation \eqref{exrick7} such that $\cA=\I\tm A$ is forward absorbing w.r.t.\ \eqref{exrick1}. If we assume $\sup_{s\leq t}\prod_{r=s}^{t-1}\frac{\alpha_r}{\alpha_+}\leq K$, then the growth estimate \eqref{thmaasl1} holds with $\alpha=\alpha_+\gamma$ due to
	$$
		\norm{\Phi(t,s)}
		\leq
		\prod_{r=s}^{t-1}\norm{\eL_r}
		=
		(\alpha_+\gamma)^{t-s}\prod_{r=s}^{t-1}\tfrac{\alpha_r}{\alpha_+}
		\leq
		K(\alpha_+\gamma)^{t-s}\fall s\leq t. 
	$$
	It follows from \eqref{exrick5} that every nonlinearity $\eN_t:U\to C(\Omega)$ has the Lipschitz constant $L:=(1+\tfrac{1}{e^2})\gamma\sup_{t\geq 0}\alpha_t$. Consequently, if furthermore $(\alpha_t)_{t\in\I}$ converges exponentially to $\alpha_+$ with rate $\alpha$, then \tref{thmaasl} applies under the assumption
	\begin{equation}
		(1+\tfrac{1}{e^2})\gamma\sup_{t\geq 0}\alpha_t<\tfrac{1-\alpha}{K}
		\label{exrick8}
	\end{equation}
	and thus \eqref{thmaa1} holds. Hence, we derive from \tref{thmaa} the relations
	$$
		\Omega_{\cA}(\tau)=\omega_\cA^-=\omega_\cA^+=\set{u^\ast}\fall\tau\in\N_0.
	$$
	If we concretely define 
	\begin{align*}
		\alpha_t&:=
		\begin{cases}
			\alpha_+(1+\alpha^t),&t\geq 0,\\
			\alpha_-,&t<0,
		\end{cases}&
		b_t&:=
		\begin{cases}
			b,&t\geq 0,\\
			0,&t<0,
		\end{cases}
	\end{align*}
	then this implies the elementary estimate
	\begin{align*}
		\prod_{r=s}^{t-1}\frac{\alpha_r}{\alpha_+}
		=
		\exp\intoo{\sum_{r=s}^{t-1}\ln(1+\alpha^r)}
		\leq
		\exp\intoo{\sum_{r=s}^{t-1}\alpha^r}
		\leq
		\exp\intoo{\tfrac{1}{1-\alpha}}\fall s\leq t.
	\end{align*}
	Hence, we set $K:=\exp\intoo{\tfrac{1}{1-\alpha}}$ and \eqref{exrick8} simplifies to $2(1+\tfrac{1}{e^2})\alpha\exp\intoo{\tfrac{1}{1-\alpha}}<1-\alpha$. This assumption can be fulfilled for $\alpha=\alpha_+\gamma$ sufficiently close to $0$, which requires the kernel data $\gamma$ or the asymptotic growth rate $\alpha_+$ to be small. 

	Even more concretely, on the habitat $\Omega=[-L,L]$ with some real $L>0$ we again consider the Laplace kernel $k(x,y):=\tfrac{a}{2}e^{-a\abs{x-y}}$ with dispersal rate $a>0$, which yields $\gamma=1-e^{-aL}$. In this framework, an illustration of the forward limit set $\omega_\cA^+=\set{u^\ast}$ and subfibres of the pullback attractor $\cA^\ast$ is given in \fref{figpullback2}. Here, both the pullback attractor $\cA^\ast$ and the forward limit set $\omega_\cA^+$ capture the long term behaviour of \eqref{exrick1}. 
	\begin{SCfigure}
		\includegraphics[width=60mm]{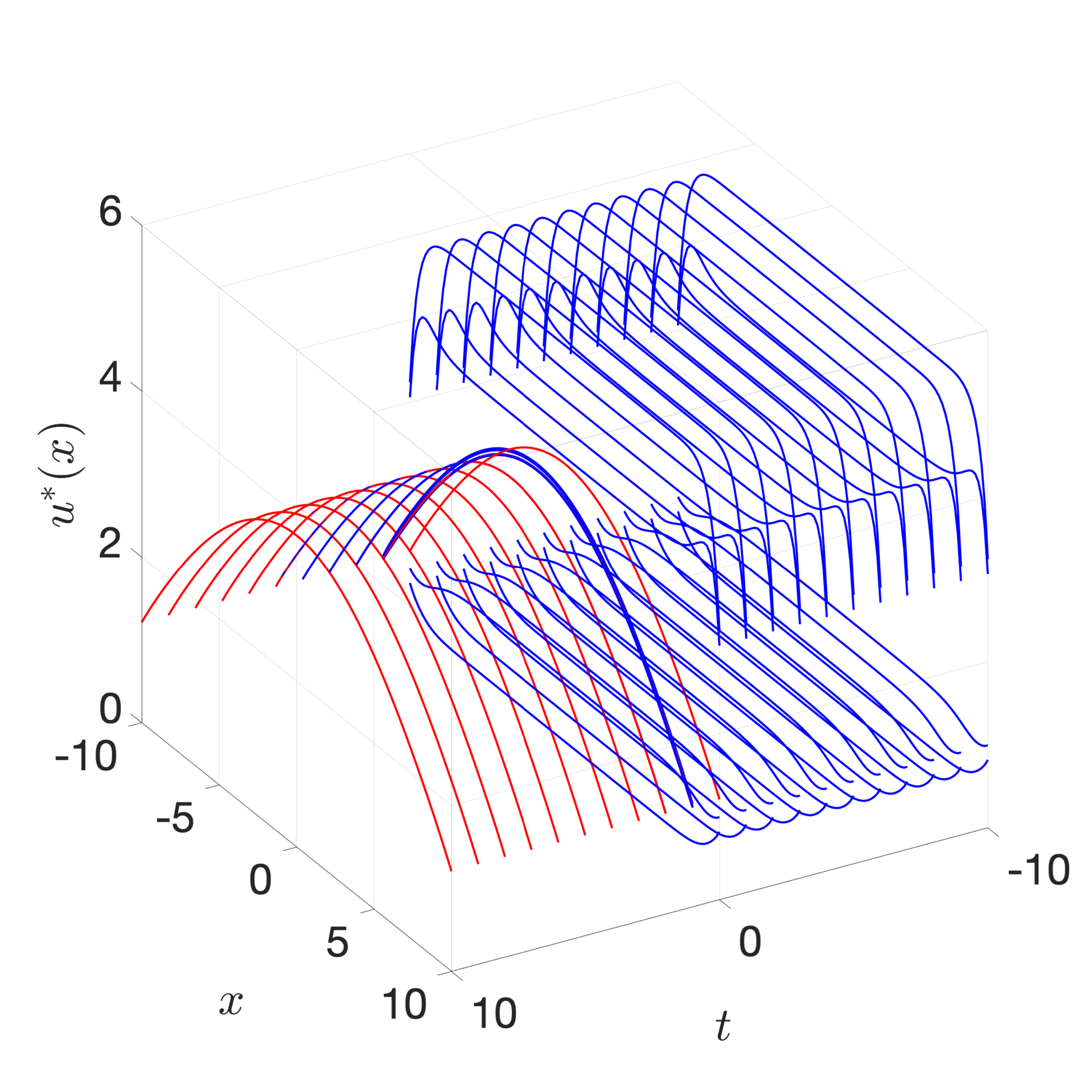}
		\caption{Functions contained in the fibres $\cA^\ast(t)$ of the pullback attractor over the times $-10\leq t\leq 10$ (blue) and the forward limit set $\N_0\tm\omega_{\cA}^+=\N_0\tm\set{u^\ast}$ (red) for the spatial Ricker equation \eqref{exrick1} with Laplace kernel ($a=2$, $L=10$), $\alpha_+=0.12$, $\alpha_-=14$ and the constant inhomogeneity $b(x):=5\cos\tfrac{x}{8}$
		\newline
		More detailed, depicted are the $4$-periodic orbits (blue) of the spatial Ricker equation, which is autonomous for $t<0$. In addition, the fibres $\cA^\ast(t)$ also contain $0$, a nontrivial fixed point and a $2$-periodic orbit. 
		}		
		\label{figpullback2}
	\end{SCfigure}
\end{example}
%
%
%
%\bigskip
%\begin{center}
%	\textsc{Acknowledgement}
%\end{center}
%\medskip
%The authors thank both referees for their constructive and helpful remarks improving the first version of this paper. 
%%
%%
%%
%\end{appendix}
%
%
%
%\bibliographystyle{amsplain}
%\bibliography{../Bib}
%
\providecommand{\bysame}{\leavevmode\hbox to3em{\hrulefill}\thinspace}
\providecommand{\MR}{\relax\ifhmode\unskip\space\fi MR }
% \MRhref is called by the amsart/book/proc definition of \MR.
\providecommand{\MRhref}[2]{\href{http://www.ams.org/mathscinet-getitem?mr=#1}{#2}}
\providecommand{\href}[2]{#2}

\end{document}